\newcommand{\ranglemp}[1]{\rangle_{-\frac{1}{2},\frac{1}{2},#1}}
\newcommand{\ranglepp}[1]{\rangle_{\frac{1}{2},#1}}
\newcommand{\ranglemm}[1]{\rangle_{-\frac{1}{2},#1}}
\newcommand{\psie}[1]{\hat{\mathsf e}^{#1}}
\newcommand{\phie}[1]{\mathsf e^{#1}}
\newcommand{\SL}[1]{{\mathscr S}_{#1}}
\newcommand{\Hp}[1]{H^\frac{1}{2}(#1)}
\newcommand{\Hcp}[1]{\mathcal H^{\frac{1}{2}}(#1)}
\newcommand{\Hm}[1]{H^{-\frac{1}{2}}(#1)}
\newcommand{\Hcm}[1]{\mathcal H^{-\frac{1}{2}}(#1)}
\renewcommand{\Re}{{\rm Re\,}}
\renewcommand{\Im}{{\rm Im\,}}
\DeclareMathAlphabet{\mathbbo}{U}{bbold}{m}{n}
\newcommand{\1}[1]{\mathbbo{1}_{#1}}
\newtheorem{theorem}{Theorem}[section]
\newtheorem{lemma}[theorem]{Lemma}
\theoremstyle{definition}
\newtheorem{definition}[theorem]{Definition}
\newtheorem{proposition}[theorem]{Proposition}
\newtheorem{conj}[theorem]{Conjecture}
\theoremstyle{remark}
\newtheorem{remark}[theorem]{Remark}
\numberwithin{equation}{section}
\begin{document}

\title[Calder\'on cavities inverse problem]{Calder\'on cavities inverse problem as a shape-from-moments problem}


\author{Alexandre Munnier}
\address{Universit\'e de Lorraine, CNRS, Inria, IECL, F-54000 Nancy, France}
\email{alexandre.munnier@univ-lorraine.fr}

\author{Karim Ramdani}
\address{Universit\'e de Lorraine, CNRS, Inria, IECL, F-54000 Nancy, France}
\email{karim.ramdani@inria.fr}

\subjclass[2010]{Primary : 31A25, 45Q05, 65N21, 30E05}

\date{}

\dedicatory{}

\begin{abstract}
In this paper, we address a particular case of Calder\'on's (or conductivity) inverse problem in dimension two, namely the case of a homogeneous background containing a finite number of cavities (i.e. heterogeneities of infinitely high conductivities). We aim to recover the location and the shape of the cavities from the knowledge of the Dirichlet-to-Neumann (DtN) map of the problem. The proposed reconstruction method is non iterative and uses two main  ingredients. First, we show how to compute the so-called generalized P\'olia-Szeg\"o tensors (GPST) of the cavities from the DtN of the cavities. 
Secondly, we show that the obtained shape from GPST inverse problem can be transformed into a shape from moments problem, for some 
particular configurations. However, numerical results suggest that the reconstruction method is efficient for arbitrary geometries.  
\end{abstract}

\maketitle


\section{Introduction}
Let be given a simply connected open bounded set $\Omega$ in $\mathbb R^2$ with Lipschitz  boundary $\Gamma$. Let $\sigma$ be a positive function in $L^\infty(\Omega)$ and consider the elliptic boundary value problem:
\begin{subequations}
\label{calderon}
\begin{alignat}{3}
-\nabla\cdot(\sigma\nabla u^f)&=0 &\quad&\text{in }\Omega\\
u^f&=f&&\text{on }\Gamma.
\end{alignat}
Calder\'on's inverse conductivity problem \cite{Cal80} is to recover the conductivity $\sigma$ knowing the Dirichlet-to-Neumann (DtN) map $f\longmapsto {\partial_nu^f}_{|\Gamma}$ of the problem.
\end{subequations}

\begin{figure}[h]
\centering
\includegraphics[width=.3\textwidth]{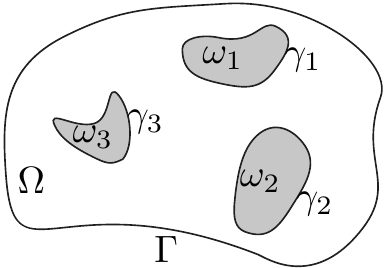}
\caption{The multiply connected cavity.}
\label{fig:cavity}
\end{figure}
In a recent work \cite{MunRam17}, the authors investigated this problem in the particular case of piecewise conductivity with infinitely high contrast (see for instance Friedman and Vogelius \cite{FriVog89} who considered this problem in the case of small inclusions). Combining an integral formulation of the problem with tools from complex analysis, they proposed an explicit reconstruction formula for the geometry of the unknown cavity. However, due to the crucial use of the Riemann mapping theorem, the proposed approach was limited to the case of a single cavity. The aim of this paper is to investigate the case of a multiply connected cavity. More precisely, we suppose that $\Omega$ contains a multiply connected domain
$\omega=\cup_{k=1}^N\omega_k$, where the open sets $\omega_{k}$, for $k=1,\dots,N$ are non intersecting simply connected domains with $C^{1,1}$  boundaries $\gamma_{k}$ and $\overline{\omega}\subset\Omega$ (see Figure~\ref{fig:cavity}).
We denote by  $\gamma=\cup_{k=1}^N \gamma_{k}$ and by $n$ the unit normal to
$\Gamma\cup \gamma$ directed towards the exterior of $\Omega\setminus\overline{\omega}$.  

For every $f$ in $H^{\frac{1}{2}}(\Gamma)$, let  $(u^f,{\mathbf c}^f)\in
H^1(\Omega\setminus\overline{\omega})\times\mathbb R^N$, with ${\mathbf c}^f:=(c^f_{1}, \dots, c^f_{N})^{\mathbf T}$, be the solution of the Dirichlet
problem:
\begin{subequations}
\label{main_problem}
\begin{alignat}{3}
-\Delta u^f&=0&\quad&\text{in }\Omega\setminus\overline{\omega}\\
u^f&=f&&\text{on }\Gamma\\
u^f&=c^f_{k}&&\text{on }\gamma_{k}, \qquad k=1, \dots,N,
\end{alignat}
with the additional circulation free conditions:
\begin{equation}
\label{free_circ}
\int_{\gamma_{k}} \partial_n u^f\,{\rm d}\sigma =0,  \qquad k=1, \dots,N.
\end{equation}
\end{subequations}
By following the proof given in the Appendix of \cite{MunRam17} for the case of a single cavity ($N=1$), it can be easily shown that this elliptic problem is well-posed and that its solution can be seen as the limit solution obtained by considering problem \eqref{calderon} for a piecewise constant conductivity and letting the constant conductivity inside the cavities tend to infinity (at the same speed). 

The inverse problem investigated in this paper can be formally stated as follows (the exact functional framework will be made precise later on): \emph{knowing the Dirichlet-to-Neumann (DtN) map $\Lambda_\gamma:f\longmapsto
{\partial_nu^f}_{|\Gamma}$, how to reconstruct the multiply connected cavity $\omega$ ?}\\
Roughly speaking, one can distinguish in the literature two classes of approaches for shape identification: iterative and non iterative methods (see for instance the survey paper by Potthast \cite{Pot06}). In the first class of methods, one computes a sequence of approximating shapes, generally by solving at each step the direct problem and using minimal data. Among these approaches, we can mention those based on optimization \cite{Bor02,CapFehGou09}, on the reciprocity gap principle \cite{KreRun05,IvaKre06,CakKre13}, on the quasi-reversibility \cite{BouDar10,BouDar14} or on conformal mapping \cite{AkdKre02,Kre04,HadKre05,HadKre06,HadKre10,Kre12,HadKre14}. The second class of methods covers non iterative methods, generally based on the construction of an indicator function of the inclusion(s). These sampling/probe methods do not need to solve the forward problem, but require the knowledge of the full DtN map. With no claim as to completeness, let us mention the enclosure and probe method of Ikehata \cite{Ike98,Ike00,Ike00b,IkeSil00,ErhPot06}, the linear sampling method \cite{ColKir96,ColPiaPot97,CakCol14}, Kirsch's Factorization method \cite{BruHan00,HanBru03,Kir05} and Generalized Polya-Szeg\"o Tensors 
\cite{AmmKan04,AmmKan06,AmmKan07,AmmGarKan14, MMA:MMA3195}.\\
The reconstruction method proposed in this paper is non iterative and can be decomposed into two main steps. First, we show that the knowledge of DtN map gives access to the so-called Generalized P\'olya-Szeg\"o Tensors (GPST) of the cavity. This is done (see \S.~\ref{subsec:DtNGPST}) by adapting to the multiply connected case the boundary integral approach proposed in \cite{MunRam17} for a simply connected cavity. The second step is to transform this shape from GPST problem into a (non standard) shape from moments problems (see \S.~\ref{subsec:moment}). Reconstructing the geometry of the cavities amounts then to reconstructing the support of a density from the knowledge of its harmonic moments. Our reconstruction algorithm is then obtained by seeking a finite atomic representation of the unknown measure. Let us emphasize that we have been able to justify the connection between the  shape from GPST problem and the shape from moments problem only in some particular cases (for a single cavity, for two disks and for small cavities). However, the reconstruction method turns out to be numerically efficient for arbitrary cavities. \\
The paper is organized as follows. We collect some technical material from potential theory in Section \ref{sect:EI}. The reconstruction method is described in \ref{sect:rec}. Section \ref{sect:proof} is devoted to the proof of Theorem \ref{thm:conj}. Finally, examples of numerical reconstructions are given in Section \ref{sect:num}. 

\section{Background on potential theory}
\label{sect:EI}
This section aims to revisit the results from potential theory given in \cite[Section 2.1.]{MunRam17} in the context of a multiply connected cavity. For the proofs, we refer the reader to \cite{MunRam17} and to the books of McLean \cite{McL00}, Steinbach \cite{Ste08b} or Hsiao and Wendland \cite{HsiWen08} for more classical material. Denote by
$$
G(x)= -\frac{1}{2\pi}\log|x|
$$
the fundamental solution of the operator $-\Delta$ in $\mathbb R^2$. We pay careful attention to state the results in a form that includes multiply connected boundaries.
\subsection{Single layer potential}
We define the function spaces 
$$\Hcp{\gamma}:=H^{\frac 12}(\gamma_{1})\times H^{\frac 12}(\gamma_{2})\times\dots\times H^{\frac 12}(\gamma_{N}),$$
$$
\Hcm{\gamma}:=H^{-\frac 12}(\gamma_{1})\times H^{-\frac 12}(\gamma_{2})\times\dots\times H^{-\frac 12}(\gamma_{N}),
$$
which are Hilbert spaces when respectively endowed with the norms
$$ \| q\|_{\frac{1}{2},\gamma} = \left(\| q_{1}\|_{\frac{1}{2},\gamma_{1}}^2+\dots+\| q_{N}\|_{\frac{1}{2},\gamma_{N}}^2\right)^{\frac 12},\qquad \forall\, q=( q_{1},\dots, q_{N})\in  \Hcp{\gamma},$$
$$ \|\hat q\|_{-\frac{1}{2},\gamma} = \left(\|\hat q_{1}\|_{-\frac{1}{2},\gamma_{1}}^2+\dots+\|\hat q_{N}\|_{-\frac{1}{2},\gamma_{N}}^2\right)^{\frac 12},\qquad \forall\,\hat q=(\hat q_{1},\dots,\hat q_{N})\in  \Hcm{\gamma}.$$
\begin{definition}
For every $\hat q=(\hat q_{1},\dots,\hat q_{N})\in \mathcal H^{-\frac{1}{2}}(\gamma)$, we denote by $\mathscr S_{\gamma} \hat q$ the single layer potential associated with the density $\hat q$.\end{definition}
Given $\hat q=(\hat q_{1},\dots,\hat q_{N})\in \mathcal H^{-\frac{1}{2}}(\gamma)$, it is well-known that the single layer potential $\mathscr S_{\gamma} \hat q$ defines a harmonic function in $\mathbb R^2\setminus \gamma$. Furthermore, if $\hat q_{k}\in L^2(\gamma_{k})$ for all $k\in\{1,\dots,N\}$, we can write:
$$\mathscr S_{\gamma} \hat q(x)=\int_{\gamma} G(x-y) \hat q(y)\,{\rm d}\sigma_y
=\sum_{k=1}^N \mathscr S_{\gamma_{k}} \hat q_{k}(x), \qquad \forall\,x\in\mathbb R^2\setminus\gamma.$$
The single layer potential defines a bounded linear operator from $\mathcal H^{-\frac{1}{2}}(\gamma)$ into $H^1_{\rm \ell oc}(\mathbb R^2)$, and the asymptotic behavior of $\mathscr S_{\gamma} \hat q$ reads as follows (see for instance \cite[p.~261]{McL00})
\begin{equation}\label{asymptSC}
\mathscr S_{\gamma} \hat q(x) = -\frac{1}{2\pi}\langle \hat q, 1\ranglemp{{\gamma}}\, \log |x| + O(|x|^{-1}),
\end{equation}
where we have set for every $\hat q\in \Hcm{\gamma}$ and $p\in \Hcp{\gamma}$:
$$\langle \hat q, p\ranglemp{{\gamma}}  := \sum_{k=1}^N \langle \hat q_{k} , p_{k}\ranglemp{{\gamma_{k}}},$$
in which $ \langle \cdot, \cdot\ranglemp{{\gamma_{k}}}$ stands for the duality brackets between $\Hm{\gamma_{k}}$ and $\Hp{\gamma_{k}}$. This shows in particular that $\mathscr S_{\gamma} \hat q$ has finite Dirichlet energy (i.e. $\nabla(\mathscr S_{\gamma} \hat q)\in (L^2(\mathbb R^2)^2$) for all $\hat q=(\hat q_{1},\dots,\hat q_{N})$ in the function space 
$$\widehat  {\mathcal H} (\gamma):=\widehat H(\gamma_{1})\times \widehat H(\gamma_{2})\times\ldots\times \widehat H(\gamma_{N}),$$
where for every $k\in\{1,\ldots,N\}$:
$$\widehat H(\gamma_{k}):=\{\hat q_{k}\in \Hm{\gamma_k}\,:\,\langle \hat q_{k},1\ranglemp{\gamma_k}=0\}.$$
\begin{remark}
It is worth noticing that condition $\hat q\in \widehat  {\mathcal H} (\gamma)$ is only a sufficient condition ensuring $\nabla (\mathscr S_{\gamma} \hat q)\in (L^2(\mathbb R^2))^2$, since a necessary and sufficient condition is: $\langle \hat q, 1\ranglemp{{\gamma}}=0$. However, considering conditions \eqref{free_circ}, $\widehat  {\mathcal H} (\gamma)$ is clearly the appropriate function space to tackle the cavity problem \eqref{main_problem}.
\end{remark}
We also recall that the single layer potential satisfies the following classical jump conditions 
\begin{equation}\label{eq:jump}
[\mathscr S_{\gamma} \hat q\, ]_{\gamma_{k}}=0, \qquad \qquad 
\left[ \partial_{n}(\mathscr S_{\gamma} \hat q) \right]_{\gamma_{k}}=-\hat q_{k}, \qquad  \forall\,\hat q=(\hat q_{1},\dots,\hat q_{N})\in \Hcm{\gamma}.
\end{equation}
In the above relations, we have used the notation $[u]_{\gamma_{k}}=u_i|_{\gamma_{k}}-u_e|_{\gamma_{k}}$ and $[\partial_{n}u]_{\gamma_{k}}=(\partial_{n}u_i)|_{\gamma_{k}}-(\partial_{n}u_e)|_{\gamma_{k}}$, where $u_i$ and $u_e$ denote respectively the restrictions of a given function $u$ to the interior and exterior of $\gamma_{k}$. Let us emphasize that these classical formulae (and other trace formulae detailed below) may be impacted by different conventions concerning the definition of the fundamental solution, the unit normal or the jumps.\\
Let us focus now on the trace of the single layer potential.
\begin{definition}
For every $\hat q=(\hat q_{1},\dots,\hat q_{N})\in \Hcm{\gamma}$, we denote by $q={\mathsf S}_{\gamma}\hat q\in \mathcal H^{\frac12}(\gamma)$ the trace  on $\gamma$ of the  single layer potential ${\mathscr S}_{\gamma}\hat q$:
$$
q=(q_{1},\dots,q_{N}), \qquad q_{i}= \sum_{j=1}^{N}\mathsf S_{ij}\hat q_j,  
$$
where 
$$
\mathsf S_{ij}\hat q_j={\rm Tr}_{\gamma_i}(\mathscr S_{\gamma_j}\hat q_j),\qquad \forall\,i,j\in\{1,\ldots,N\}.$$
\end{definition}
Note that $\mathsf S_\gamma=\left(\mathsf S_{ij}\right)_{1\leqslant i,j\leqslant N}:\mathcal H^{-\frac 12}(\gamma)\to\mathcal H^{\frac12}(\gamma)$ defines a bounded linear operator with weakly singular kernel. Hence, if $\hat q=(\hat q_{1},\dots,\hat q_{N})\in \Hcm{\gamma}$ is such that $\hat q_{k}\in L^\infty(\gamma)$ for all $k\in\{1,\dots,N\}$, then
$${\mathsf S}_{\gamma} \hat q(x)=\int_{\gamma} G(x-y) \hat q(y)\,{\rm d}\sigma_y, \quad\forall\, x\in \gamma.$$
Using Green's formula and the asymptotics \eqref{asymptSC}, we can easily prove the identity
\begin{equation}
\label{equiv:decroi}
\langle \hat q,\mathsf S_\gamma \hat q  \ranglemp{{\gamma}} = \int_{\mathbb R^2} \left|\nabla (\mathscr S_{\gamma} \hat q)\right|^2{\rm d}x<+\infty, \qquad \forall\,\hat q\in {\widehat {\mathcal H}}({\gamma}).
\end{equation}
According to \cite[Theorem 8.12]{McL00}, $\mathsf S_\gamma:\mathcal H^{-\frac 12}(\gamma)\to\mathcal H^{\frac12}(\gamma)$ defines a strictly positive-definite operator on ${\widehat {\mathcal H}}$ (since ${\widehat {\mathcal H}}\subset\{\hat q \in \Hcm{\gamma} : \langle \hat q, 1\ranglemp{{\gamma}}=0\} $). It is also known (see \cite[Theorem 8.16]{McL00}) that $\mathsf S_\gamma$ is boundedly invertible if and only if the logarithmic capacity of $\gamma$ (see \cite[p.~264]{McL00} for the definition) satisfies ${\rm Cap}({\gamma})\neq 1$. From now on, and without loss of generality, let us assume that  the diameter of $\Omega$ is less than 1 (otherwise, it suffices to rescale the problem), which implies in particular that ${\rm Cap}(\Gamma)< 1$ and ${\rm Cap}(\gamma)< 1$ (see \cite[p.~143]{Ste08b} and references therein). 

In order to characterize the image of $\hat {\mathcal H}(\gamma)$ by $ \mathsf S_\gamma$, we need to introduce the following densities. 
\begin{definition}
\label{def:psieq}
For every $k\in\{1,\ldots,N\}$, we define the unique density:
$$\psie{k}:=(\psie{k}_1,\ldots,\psie{k}_N)\in \Hcm{\gamma}$$ 
such that $\mathsf S_\gamma \psie{k}$ is constant on $\gamma_\ell$ for every $\ell \in\{1,\ldots,N\}$ and satisfying the circulation conditions: 
$$\langle \psie{k}_\ell,1\ranglemp{\gamma_\ell}=\delta_{k,\ell},\qquad \ell =1, \dots, N$$
where $\delta_{k,\ell}$ denotes Kronecker's symbol.
\end{definition}
The existence and uniqueness of such functions $\psie{k}$ is ensured by Lemma \ref{lem:EImult} of the Appendix (simply take $f=0$ and $\mathbf b$ as the $k-$th element of the canonical basis of $\mathbb R^N$). Furthermore, the family $\{\psie{k}, k=1,\dots, N\}$ is obviously linearly independent in $\Hcm{\gamma}$ and thus, so is the family $\{\phie{k}:= \mathsf S \psie{k}, k=1,\dots, N\}$ in $\Hcp{\gamma}$.\\
\begin{proposition}
The operator $\mathsf S_\gamma$ defines an isomorphism from $\hat {\mathcal H}(\gamma)$ onto 
$${\mathcal H}(\gamma):=\{q\in \Hcp{\gamma} \,:\,\langle \psie{k},   q\ranglemp{\gamma}=0, \ k=1,\dots,N\}.$$
\end{proposition}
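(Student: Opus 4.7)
The plan is to establish separately that $\mathsf S_\gamma$ sends $\hat{\mathcal H}(\gamma)$ into $\mathcal H(\gamma)$, is injective on $\hat{\mathcal H}(\gamma)$, and is surjective onto $\mathcal H(\gamma)$. Since both subspaces are closed (each being the kernel of a continuous linear functional) in the ambient Hilbert spaces, the bounded inverse theorem will then deliver the isomorphism.

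For the first step, I would use the symmetry of the bilinear form associated with $\mathsf S_\gamma$ (which reflects $G(x-y)=G(y-x)$) together with Definition \ref{def:psieq}. Denoting by $c_{k,\ell}$ the constant value of $\mathsf S_\gamma \psie{k}$ on $\gamma_\ell$, one computes, for every $\hat q \in \hat{\mathcal H}(\gamma)$ and every $k\in\{1,\dots,N\}$,
$$\langle \psie{k}, \mathsf S_\gamma \hat q\ranglemp{\gamma} = \langle \hat q, \mathsf S_\gamma \psie{k}\ranglemp{\gamma} = \sum_{\ell=1}^N c_{k,\ell}\,\langle \hat q_\ell, 1\ranglemp{\gamma_\ell} = 0,$$
using the definition of $\hat{\mathcal H}(\gamma)$ at the last step; this gives $\mathsf S_\gamma(\hat{\mathcal H}(\gamma))\subset\mathcal H(\gamma)$. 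Injectivity is immediate from the strict positive-definiteness of $\mathsf S_\gamma$ on $\hat{\mathcal H}(\gamma)$ recalled below \eqref{equiv:decroi}.

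For surjectivity, given $q\in\mathcal H(\gamma)$, the hypothesis ${\rm Cap}(\gamma)<1$ together with \cite[Theorem 8.16]{McL00} yields a unique $\hat q\in\Hcm{\gamma}$ such that $\mathsf S_\gamma \hat q = q$, and one must verify that $\hat q\in\hat{\mathcal H}(\gamma)$. Using the symmetry once more,
$$0 = \langle \psie{k}, q\ranglemp{\gamma} = \langle \hat q, \mathsf S_\gamma \psie{k}\ranglemp{\gamma} = \sum_{\ell=1}^N c_{k,\ell}\,\langle \hat q_\ell, 1\ranglemp{\gamma_\ell}, \qquad k=1,\dots,N,$$
so the problem reduces to showing that the matrix $C:=(c_{k,\ell})$ is invertible.

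The main obstacle is precisely the invertibility of $C$, because the positive-definiteness coming from \eqref{equiv:decroi} holds only on $\hat{\mathcal H}(\gamma)$ while $\langle \psie{k},1\ranglemp{\gamma}=1$ places the $\psie{k}$ outside this subspace. I would circumvent this via a global argument: if $\mathbf b=(b_k)\in\mathbb R^N$ satisfies $C^\top \mathbf b=0$, then $\mathsf S_\gamma\bigl(\sum_k b_k \psie{k}\bigr)$ vanishes identically on $\gamma$; the global invertibility of $\mathsf S_\gamma:\Hcm{\gamma}\to\Hcp{\gamma}$ forces $\sum_k b_k \psie{k}=0$, and the linear independence of the family $\{\psie{k}\}$ noted in the excerpt yields $\mathbf b=0$. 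Hence $C$ is invertible, $\hat q\in\hat{\mathcal H}(\gamma)$, and the proof is complete.
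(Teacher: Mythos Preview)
Your proof is correct and follows essentially the same route as the paper: both hinge on the symmetry of $\mathsf S_\gamma$ to obtain the identity $\langle \psie{k},\mathsf S_\gamma\hat q\ranglemp{\gamma}=\sum_\ell c_{k,\ell}\langle\hat q_\ell,1\ranglemp{\gamma_\ell}$ and then on the invertibility of the matrix $C=(c_{k,\ell})$. The only organizational difference is that the paper argues the equivalence $\hat q\in\hat{\mathcal H}(\gamma)\Leftrightarrow \mathsf S_\gamma\hat q\in\mathcal H(\gamma)$ in one stroke (for arbitrary $\hat q\in\Hcm{\gamma}$), whereas you split the argument into inclusion, injectivity, and surjectivity; and the paper simply asserts the invertibility of $C$ as a consequence of the linear independence of the $\phie{k}=\mathsf S_\gamma\psie{k}$ (these functions being piecewise constant, their independence in $\Hcp{\gamma}$ is exactly the independence of the rows of $C$), while you re-derive it from the global injectivity of $\mathsf S_\gamma$.
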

\begin{proof}
We only need to prove that ${\mathcal H}(\gamma)=\mathsf S_\gamma(\hat {\mathcal H}(\gamma))$. Let $\hat q\in \Hcm{\gamma}$ and set $q:=\mathsf S_\gamma \hat q$. We note that for all $i\in\{1,\dots,N\}$:
$$\langle \psie{i},q \ranglemp{\gamma}
=\langle \hat q,\phie{i}   \ranglemp{\gamma}
=\sum_{j=1}^N \langle \hat q_{j}, \phie{i}_{j} \ranglemp{{\gamma_{j}}}
=\sum_{j=1}^N \phie{i}_{j}  \ \langle \hat q_{j}, 1\ranglemp{{\gamma_{j}}},$$
where the last equality follows from the fact that $\phie{i}_{j}$ is constant on each boundary $\gamma_{j}$. The matrix $(\phie{i}_{j})_{1\leqslant i,j\leqslant N}$ being invertible, we have $\hat q\in \hat {\mathcal H}(\gamma)$ if and only if $q\in {\mathcal H}(\gamma)$. 
\end{proof}
The above result allows us to use the linear operator:
$$ 
{\mathsf S}_{\gamma}:  \hat q\in {\widehat {\mathcal H}}({\gamma})\longmapsto q\in
{\mathcal H}({\gamma}),$$
to identify any density $\hat q \in  {\widehat {\mathcal H}}({\gamma})$ with the trace 
$$q:={\mathsf S}_{\gamma}  \hat q \in {\mathcal H}({\gamma}).$$
Throughout the paper, we will systematically use this identification, using the notation with (respectively without) a hat on single layer densities of $ {\widehat {\mathcal H}}({\gamma})$ (respectively traces of single layer densities). 
\begin{definition}
\label{inner} 
For all $\hat p,\hat q\in {\widehat {\mathcal H}}({\gamma})$, we set:
	$$\langle \hat p,\hat q\ranglemm{\gamma}=\langle p,q\ranglepp{\gamma}=\langle \hat p,q \ranglemp{\gamma}.$$
\end{definition}
Obviously, using these inner products, the isomorphism ${\mathsf S}_{\gamma}$ turns out to be an isometry between the  spaces ${\widehat {\mathcal H}}({\gamma})$ and ${\mathcal H}({\gamma})$:
$$\|\hat q\|^2_{-\frac{1}{2},\gamma}=\|q\|^2_{\frac{1}{2},\gamma}=\int_{\mathbb R^2}|\nabla(\mathscr S_{\gamma} \hat q)|^2\,{\rm d}x,\qquad \forall\,\hat q\in {\widehat {\mathcal H}}({\gamma}).
$$
The following orthogonal projections will be needed in the sequel.
\begin{definition}
\label{def:Pi}
Let $\Pi_{\gamma}$ and $\widehat\Pi_{\gamma}$ denote respectively the orthogonal projections  from ${\mathcal H}^{\frac{1}{2}}({\gamma})$ into ${\mathcal H}({\gamma})$ and from ${\mathcal H}^{-\frac{1}{2}}({\gamma})$ into ${\widehat {\mathcal H}}({\gamma})$.  
\end{definition}
It can be easily checked that:
\begin{alignat*}{3}
\forall\,  \hat q\in {\mathcal H}^{-\frac{1}{2}}({\gamma}):\qquad \widehat\Pi_{\gamma}\hat q &= \hat q-\sum_{k=1}^{N} \langle \hat q_{k},1\ranglemp{\gamma_{k}} \,\hat{\mathsf e}^{k}\\
\forall\,  q\in {\mathcal H}^{\frac{1}{2}}({\gamma}):\qquad (\Pi_{\gamma} q)_{k} &=  q_{k}- \langle \hat {\mathsf e}^{k},q\ranglemp{\gamma}  \, 1_{|\gamma_{k}}, \qquad k=1,\dots, N.
\end{alignat*}
\begin{definition}\label{def:Tr0}
We denote by ${\mathsf{Tr}}_{\gamma}$  the classical trace operator (valued into ${\mathcal H}^{\frac{1}{2}}({\gamma})$), and by ${\mathsf{Tr}}^0_{\gamma}$ when it is left-composed with the orthogonal projection onto ${\mathcal H}({\gamma})$: ${\mathsf{Tr}}^0_{\gamma}:= \Pi_\gamma {\mathsf{Tr}}_{\gamma}$.
\end{definition}
Let us recall a useful characterization of the norm chosen on ${\mathcal H}(\gamma)$ (the proofs of the assertions stated below are given in \cite[Section 2.1.]{MunRam17} for the case of a simply connected cavity, but they can be easily extended to the multiply connected case).  
We define the quotient weighted Sobolev space:
$$W^1_0(\mathbb R^2)=\{u\in\mathcal D'(\mathbb R^2)\,:\,\rho u \in L^2(\mathbb R^2),\,\nabla u\in (L^2(\mathbb R^2))^2\}/\mathbb R,$$
where the weight is given by 
$$\rho(x):=\left(\sqrt{1+|x|^2}\log(2+|x|^2)\right)^{-1},\qquad x\in\mathbb R^2,$$
and where the quotient means that functions of $W^1_0(\mathbb R^2)$ are defined up to an additive constant. This space is a Hilbert space once equipped with the inner product:
$$\langle u,v\rangle_{W^1_0(\mathbb R^2)}:=\int_{\mathbb R^2}\nabla u\cdot\nabla v\,{\rm d}x.$$
For  $q\in \mathcal H(\gamma)$, and according to \eqref{equiv:decroi}, we have ${\mathscr S}_{\gamma}\hat q\in W^1_0(\mathbb R^2)$ and
$$
\|\hat q\|_{-\frac{1}{2},\gamma} = \|q\|_{\frac{1}{2},\gamma} = \|{\mathscr S}_{\gamma}\hat q\|_{W^1_0(\mathbb R^2)}, \qquad \forall\,q\in \mathcal H(\gamma).
$$
To conclude this subsection, let us recall that the normal derivative of the single layer potential is not continuous across $\gamma$ (see also \eqref{eq:jump}), as with obvious matrix notation (here the signs $+$ and $-$ refer respectively to the trace taken from the exterior and the interior of $\gamma$):
$$
\left(\partial_{n}(\mathscr S_{\gamma} \hat q)\right)_{|\gamma^\pm} = \left(\pm \frac12 + \mathsf L_{\gamma}^*\right)\hat q,
$$
where 
\begin{equation}\label{eq:Lstar1}
\mathsf L_\gamma^*=\left(\mathsf L_{ij}^*\right)_{1\leqslant i,j\leqslant N}:\mathcal H^{-\frac 12}(\gamma)\to\mathcal H^{-\frac12}(\gamma)
\end{equation}
where for smooth densities
\begin{equation}\label{eq:Lstar2}
(\mathsf L_{ij}^* \hat q_j)(x) = \int_{\gamma_j} \partial_{n_x}G(x-y)  q_{j}(y)\,{\rm d}\sigma_y, \qquad x\in \gamma_{i}.
\end{equation}

\subsection{Double layer potential}
For more details about the results recalled in this section, we refer the interested reader to the monographs by Hackbusch \cite[Chapter 7]{Hac95}, Kress \cite[Chapter 7]{Kre99} or Wen \cite[Chapter 4]{Wen92}.
\begin{definition}
For every $q=(q_{1},\dots, q_{N})\in \mathcal H^{\frac{1}{2}}(\gamma)$, we denote by $\mathscr
D_{\gamma}  q$ the double layer potential associated with the trace $q$. 
\end{definition}
Given $ q=( q_{1},\dots, q_{N})\in \mathcal H^{\frac{1}{2}}(\gamma)$, it is well-known that the double layer potential
$$\mathscr D_{\gamma}  q(x)=\int_{\gamma} \partial_{n_y}G(x-y)  q(y)\,{\rm d}\sigma_y
=\sum_{k=1}^N \mathscr D_{\gamma_{k}}  q_{k}(x), \qquad \forall\,x\in\mathbb R^2\setminus\gamma$$
defines a harmonic function in $\mathbb R^2\setminus \gamma$ whose normal derivative across $\gamma$ is continuous, but whose trace is not continuous. More precisely, we have
$$
{\rm Tr}_{\gamma^\pm}(\mathscr D_{\gamma} q) = \left(\mp \frac12 + \mathsf L_{\gamma}\right)q,
$$
where $\mathsf L_\gamma=\left(\mathsf L_{ij}\right)_{1\leqslant i,j\leqslant N}:\mathcal H^{\frac 12}(\gamma)\to\mathcal H^{\frac12}(\gamma)$, the adjoint of the operator defined in \eqref{eq:Lstar1}-\eqref{eq:Lstar2}, is given by 
$$
(\mathsf L_{ij} q_j)(x) = \int_{\gamma_j} \partial_{n_y}G(x-y)  q_{j}(y)\,{\rm d}\sigma_y, \qquad x\in \gamma_{i}.
$$
One has in particular
\begin{equation}\label{eq:jump2}
[\mathscr D_{\gamma}  q\, ]_{\gamma_{k}}=q.
\end{equation}

Later (see the proof of Theorem \ref{thm:quadrature2disks}), we will need to compute the inner product of two densities $p,q\in \mathcal H$ using double layer potentials. This is provided by the two next lemmas, which deal respectively with the cases of real and complex valued densities. 
\begin{lemma}\label{lem:dc1}
Given $p,q\in \mathcal H({\gamma})$, let $u:= {\mathcal S}_{\gamma}\hat p$ and $
v:= {\mathcal S}_{\gamma}\hat q$ denote the single layers respectively associated with $\hat p,\hat q\in {\widehat {\mathcal H}}({\gamma})$. Let $\widetilde u$ be the function defined in $\mathbb R^2\setminus\gamma$ by
$$
\widetilde u=
\begin{cases}
\widetilde u^-&\text{in }$\omega$\\
\widetilde u^+&\text{in }$\omega^c:=\mathbb R^2\setminus\overline{\omega}$.
\end{cases}
$$
where $\widetilde u^-$ (respectively $\widetilde u^+$) denotes a harmonic conjugate of $u$ in $\omega$ (respectively in $\omega^c$). Similarly, we define the function $\widetilde v$ associated to $v$. Then, the functions $u$ and $v$ admit double layer representation formulae associated to two densities $\check p,\check q\in   {\mathcal H}({\gamma})$: 
$$\widetilde u:= {\mathcal D}_{\gamma}\check p, \qquad\qquad
\widetilde v:= {\mathcal D}_{\gamma}\check q.$$
Moreover, we have
\begin{equation}\label{eq:pqcheck}
\langle p,q\ranglepp{\gamma}=-\langle \partial_{n}\widetilde u,\check q\ranglemp{\gamma}.
\end{equation}
\end{lemma}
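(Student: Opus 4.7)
The plan is to establish the result in three steps: global existence of the harmonic conjugates, identification of $\widetilde u,\widetilde v$ with double layer potentials, and derivation of the inner product formula via Green's identity.

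First, in each simply connected cavity $\omega_k$ the harmonic conjugate $\widetilde u^-$ exists by the standard topological argument. For the multiply connected exterior domain $\omega^c$, global existence of $\widetilde u^+$ is equivalent to vanishing of the periods of the closed form $*du$ around each $\gamma_k$, i.e.\ $\int_{\gamma_k}(\partial_n u)_e\,{\rm d}\sigma=0$ for $k=1,\ldots,N$. To obtain these identities, I would apply the divergence theorem to the harmonic function $u$ on each $\omega_k$ (whose outward unit normal is $-n$), which yields $\int_{\gamma_k}(\partial_n u)_i\,{\rm d}\sigma=0$; the jump relation \eqref{eq:jump} combined with the fact that $\langle\hat p_k,1\ranglemp{\gamma_k}=0$ (since $\hat p\in\widehat{\mathcal H}(\gamma)$) then forces $\int_{\gamma_k}(\partial_n u)_e\,{\rm d}\sigma=0$ as well.

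Next, the Cauchy--Riemann equations give $\partial_n\widetilde u=-\partial_\tau u$, and since $u$ is continuous across $\gamma$ as a single-layer trace, its tangential derivative $\partial_\tau u$ is continuous, so $\partial_n\widetilde u$ has no jump across $\gamma$. The asymptotic expansion \eqref{asymptSC} together with $\langle\hat p,1\ranglemp{\gamma}=0$ yields $u=O(|x|^{-1})$ at infinity, hence $\widetilde u^+=O(|x|^{-1})$ after fixing the additive constant in $\omega^c$. Setting $\check p_k:=[\widetilde u]_{\gamma_k}=\widetilde u^-|_{\gamma_k}-\widetilde u^+|_{\gamma_k}$ and using the jump formula \eqref{eq:jump2}, the difference $\widetilde u-\mathscr D_\gamma\check p$ is harmonic throughout $\mathbb R^2$ (no jump and no jump in normal derivative across $\gamma$) and vanishes at infinity, hence vanishes identically. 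The additive constants of $\widetilde u^-$ on each $\omega_k$ are then fixed so that $\check p\in\mathcal H(\gamma)$ (which amounts to composing with the projection $\Pi_\gamma$ of Definition \ref{def:Pi}), and the same construction yields $\check q$ associated to $\widetilde v$.

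Finally, by the isometry of Definition \ref{inner} and the pointwise identity $\nabla u\cdot\nabla v=\nabla\widetilde u\cdot\nabla\widetilde v$ (which follows from $\nabla\widetilde u,\nabla\widetilde v$ being rotations of $\nabla u,\nabla v$ by $\pi/2$), one has
$$\langle p,q\ranglepp{\gamma}=\int_{\mathbb R^2}\nabla u\cdot\nabla v\,{\rm d}x=\int_{\mathbb R^2}\nabla\widetilde u\cdot\nabla\widetilde v\,{\rm d}x.$$
Splitting this last integral over $\omega$ and $\omega^c$, applying Green's identity on each piece using the harmonicity of $\widetilde u$ (the contribution at infinity vanishing thanks to the $O(|x|^{-1})$ decay), and using the continuity of $\partial_n\widetilde u$ across $\gamma$, one finds
$$\int_{\mathbb R^2}\nabla\widetilde u\cdot\nabla\widetilde v\,{\rm d}x=\int_\gamma(\widetilde v^+-\widetilde v^-)\,\partial_n\widetilde u\,{\rm d}\sigma=-\langle\partial_n\widetilde u,\check q\ranglemp{\gamma},$$
which is exactly \eqref{eq:pqcheck}. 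The main technical obstacle lies in the second step: one must verify that the additive constants of the harmonic conjugates can be fine-tuned so that the jump densities $\check p$ and $\check q$ actually satisfy the orthogonality constraints $\langle\psie{k},\check p\ranglemp{\gamma}=0$ defining $\mathcal H(\gamma)$.
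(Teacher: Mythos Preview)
Your proof is correct and follows essentially the same approach as the paper's: existence of the harmonic conjugate in $\omega^c$ via the circulation-free condition, continuity of $\partial_n\widetilde u$ across $\gamma$ via Cauchy--Riemann and continuity of $\partial_\tau u$, double-layer representation from the jump $[\widetilde u]_\gamma$, and Green's formula on $\omega$ and $\omega^c$ to obtain \eqref{eq:pqcheck}. You are in fact more explicit than the paper on two points---the period computation in $\omega^c$ and the adjustment of additive constants to land in $\mathcal H(\gamma)$---which the paper handles only by remarking that the constant ambiguity does not affect \eqref{eq:pqcheck}.
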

\begin{proof} 
The harmonic conjugate $\widetilde u^-$ of $u$ in $\omega$ (i.e. the harmonic function such that $u+i \widetilde u^-$ is holomorphic in $\omega$) exists and is uniquely defined up to a constant (and this does not affect \eqref{eq:pqcheck}). The existence of the harmonic conjugate $\widetilde u^+$ of $u$ in $\omega^c$ is ensured by the fact that $u={\mathcal S}_{\gamma}\hat p$ is circulation free on $\gamma$ (since $p\in \widehat  {\mathcal H} (\gamma)$).
Moreover, thanks to Cauchy-Riemann's equations we have 
\begin{equation}\label{eq:gradperp}
\nabla u = (\nabla \widetilde u)^\perp \qquad \text{in }  \omega\cup \omega^c.
\end{equation}
Since $u$ admits by assumption a single layer  representation, it is continuous across $\gamma$, and hence, the same holds for its tangential derivative. According to \eqref{eq:gradperp}, this yields the continuity of $\partial_n\widetilde u$ across $\gamma$ (note that trace of $\widetilde u$ is not continuous across $\gamma$). From classical integral representation formula for harmonic functions, this implies that $\widetilde u$ admits a double layer representation: $\widetilde u:= {\mathcal D}_{\gamma}\check p$, with $\check p=[\widetilde u]_{\gamma}$. Defining similarly $\widetilde v:= {\mathcal D}_{\gamma}\check q$, we obtain using \eqref{eq:gradperp} that
$$
\langle p,q\ranglepp{\gamma} = \int_{\mathbb R^2}\nabla u\cdot \nabla v = \int_{\omega}\nabla \widetilde u\cdot \nabla \widetilde v +\int_{\omega^c}\nabla \widetilde u\cdot \nabla \widetilde v =-\langle \partial_{n}\widetilde u,[\widetilde v]\ranglemp{\gamma}, 
$$
where the last equality follows from Green's formula. Equation \eqref{eq:pqcheck} can then be deduced from the fact that $[\widetilde v]=[{\mathcal D}_{\gamma}\check q]=\check q$.
\end{proof} 
In the rest of the paper, we still denote by $\langle \cdot, \cdot \rangle_{\frac 12, \gamma}$ the hermitian inner product on $\mathcal H^{\frac12}(\gamma) $ seen as a complex Hilbert space. For instance, for $p=p^{1}+ip^{2}$ and $q=q^{1}+iq^{2}$ we will have
\begin{equation}\label{eq:psc}
\langle p, q \rangle_{\frac 12, \gamma} = \langle p^{1}, q^{1} \rangle_{\frac 12, \gamma} + \langle p^{2}, q^{2} \rangle_{\frac 12, \gamma} +i \left(\langle p^{2}, q^{1}\rangle_{\frac 12, \gamma}-\langle p^{1}, q^{2} \rangle_{\frac 12, \gamma}\right).
\end{equation}
Lemma \ref{lem:dc1} admits then the following counterpart for complex-valued densities. 
\begin{lemma}\label{lem:dc2} 
Given $p=p^{1}+ip^{2}$ and $q=q^{1}+iq^{2}$ with $p^{j},q^{j}\in \mathcal H$ ($j=1,2$), let us set
$$u:= {\mathcal S}_{\gamma}\hat p= u^{1}+iu^{2}, \qquad \qquad v:= {\mathcal S}_{\gamma}\hat q=v^{1}+iv^{2}$$ 
where $ u^{j}= {\mathcal S}_{\gamma}\hat p^{j}$ and $v^{j}:= {\mathcal S}_{\gamma}\hat q^{j}$. Define on $\mathbb R^2\setminus\gamma$ the complex-valued functions  $\widetilde u=\widetilde u^{1}+i\widetilde u^{2}$ and $\widetilde v=\widetilde v^{1}+i\widetilde v^{2}$ where $\widetilde u^j$ and $\widetilde v^j$ ($j=1,2$) are respectively the harmonic conjugates of $u^j$ and $v^j$ (as defined in Lemma \ref{lem:dc1}). Then, the functions $u$ and $v$ admit double layer representation formulae associated to two complex-valued densities $\check p,\check q\in   {\mathcal H}({\gamma})$: 
$$\widetilde u:= {\mathcal D}_{\gamma}\check p, \qquad\qquad
\widetilde v:= {\mathcal D}_{\gamma}\check q.$$
Moreover, we have
\begin{equation}\label{eq:pqcheckc}
\langle p,q\ranglepp{\gamma}=-\langle \partial_{n}\widetilde u,\check q\ranglemp{\gamma},
\end{equation}
where $\partial_{n}\widetilde u= \partial_{n}\widetilde u^{1}+i \partial_{n}\widetilde u^{2}$.
\end{lemma}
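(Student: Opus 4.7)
The plan is to reduce Lemma \ref{lem:dc2} to its real-valued counterpart Lemma \ref{lem:dc1} by decomposing the complex densities into real and imaginary parts and reassembling via linearity. First, for each pair $(j,k)\in\{1,2\}^2$, I would apply Lemma \ref{lem:dc1} to the real-valued densities $\hat p^j$ and $\hat q^k$. This yields a harmonic conjugate $\widetilde u^j$ of $u^j$ (respectively $\widetilde v^j$ of $v^j$) together with a real-valued density $\check p^j \in \mathcal H(\gamma)$ (respectively $\check q^j \in \mathcal H(\gamma)$) such that $\widetilde u^j = \mathcal D_\gamma \check p^j$ and $\widetilde v^j = \mathcal D_\gamma \check q^j$, and the four scalar identities
$$\langle p^j,q^k\ranglepp{\gamma} = -\langle \partial_n \widetilde u^j,\check q^k\ranglemp{\gamma}, \qquad j,k\in\{1,2\}.$$

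Next, I would set $\check p := \check p^{1}+i\check p^{2}$ and $\check q := \check q^{1}+i\check q^{2}$ in the complexification of $\mathcal H(\gamma)$. By $\mathbb R$-linearity of the double layer operator, $\widetilde u = \mathcal D_\gamma \check p$ and $\widetilde v = \mathcal D_\gamma \check q$, which establishes the double layer representations claimed in the lemma. It is also immediate from the definitions that $\partial_n \widetilde u = \partial_n \widetilde u^{1}+ i \partial_n \widetilde u^{2}$, as stated.

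To prove identity \eqref{eq:pqcheckc}, I would expand the left-hand side with the sesquilinear formula \eqref{eq:psc}, substitute the four real-valued identities just obtained, and then recognize the result as $-\langle \partial_n \widetilde u,\check q\ranglemp{\gamma}$ via the analogous sesquilinear extension of the duality pairing between $\Hcm{\gamma}$ and $\Hcp{\gamma}$. Concretely, the real parts match directly and the imaginary cross-terms $\langle p^{2},q^{1}\ranglepp{\gamma}-\langle p^{1},q^{2}\ranglepp{\gamma}$ become $-\langle \partial_n \widetilde u^{2},\check q^{1}\ranglemp{\gamma}+\langle \partial_n \widetilde u^{1},\check q^{2}\ranglemp{\gamma}$, which is precisely the imaginary part of $-\langle \partial_n \widetilde u,\check q\ranglemp{\gamma}$.

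The only delicate point is bookkeeping of complex conjugates: the hermitian pairing \eqref{eq:psc} is antilinear in the second argument, and one must extend the duality bracket consistently so that the signs in the imaginary cross-terms line up on both sides. Once that convention is pinned down, the lemma follows with no further analytic input beyond Lemma \ref{lem:dc1} and the $\mathbb R$-linearity of $\mathcal D_\gamma$; there is no substantive obstacle to overcome.
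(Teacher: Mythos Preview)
Your proposal is correct and follows essentially the same approach as the paper: apply Lemma \ref{lem:dc1} componentwise to obtain the four real identities $\langle p^j,q^k\ranglepp{\gamma}=-\langle\partial_n\widetilde u^j,\check q^k\ranglemp{\gamma}$, then reassemble using the sesquilinear expansion \eqref{eq:psc}. The paper's proof is the same computation written out in a single displayed line.
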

\begin{proof}
From \eqref{eq:pqcheck} and \eqref{eq:psc}, we immediately get that 
$$
\langle p, q \rangle_{\frac 12, \gamma} = - \langle \partial_{n}\widetilde u^{1}, \check q^{1} \ranglemp{\gamma} - \langle \partial_{n}\widetilde u^{2}, \check q^{2} \ranglemp{\gamma} -i \left(\langle \partial_{n}\widetilde u^{2}, \check q^{1}\ranglemp{\gamma}-\langle \partial_{n}\widetilde u^{1}, \check q^{2} \ranglemp{\gamma}\right)
$$
which is the claimed result.
\end{proof}
Let us conclude by recalling the relation (in dimension two) between the double layer potential and the (complex) Cauchy transform. In the sequel, we identify a point $x=(x_{1},x_{2})$ of the plane with the complex number $z=x_{1}+ix_{2}$. The Cauchy transform of a density $q$ defined on $\gamma$ is given by:
$$\mathscr C_\gamma q(z):=\frac{1}{2i\pi}\int_{\gamma}\frac{q(\zeta)}{\zeta-z}\,{\rm d}\zeta,\qquad z\in \mathbb C\setminus\gamma.$$
\begin{remark}\label{rem:Cauchyf}
It is worth noticing that the Cauchy transform can be easily computed in the following particular cases using Cauchy integral formula and  Cauchy integral theorem. 
\begin{enumerate}
\item
If $q$ is the trace of a holomorphic function $F$ on $\omega$, then
$$
\mathscr C_{\gamma} q(z)=
\begin{cases} 
F(z)&$\qquad z\in\omega$,\\
0&$\qquad z\in\omega^c:=\mathbb C \setminus\overline{\omega}$.
\end{cases}
$$
\item
If $q(z)= (z-a)^{-1}$, with  $a\notin\omega$, then 
$$
\mathscr C_{\gamma} q(z)=
\begin{cases} 
(z-a)^{-1}&$\qquad z\in\omega,$\\
0&$\qquad z\in\omega^c:=\mathbb C \setminus\overline{\omega}.$
\end{cases}
$$
\item
If $q(z)= (z-a)^{-1}$, with $a\in\omega$, then 
$$
\mathscr C_{\gamma} q(z)=
\begin{cases} 
0&$\qquad z\in\omega,$\\
-(z-a)^{-1}&$\qquad z\in\omega^c:=\mathbb C \setminus\overline{\omega}.$
\end{cases}
$$
\end{enumerate}
\end{remark}


It turns out (see for instance \cite[p.~254]{Hac95} or \cite[p.~100]{Kre99}) that for real-valued traces $q$, the double layer potential coincides with the real part of the Cauchy integral. Therefore, for complex-valued densities $q$, the Cauchy transform and the double layer potential are related via the following formulae:
\begin{subequations}
\label{magic}
\begin{align}
\mathscr D_\gamma (\Re q) &=\frac{1}{2}\Re\left(\mathscr C_\gamma(q+\bar q)\right)\\
\mathscr D_\gamma (\Im q) &=\frac{1}{2}\Im\left(\mathscr C_\gamma(q-\bar q)\right).
\end{align}
The above relations can be summarized in the identity:
\begin{equation}
\mathscr D_\gamma q=\frac{1}{2} \mathscr C_\gamma q+\frac{1}{2} \overline{\mathscr C_\gamma \bar q},
\end{equation}
or equivalently:
\begin{equation}
\mathscr D_\gamma \bar q=\frac{1}{2} \overline{\mathscr C_\gamma q}+\frac{1}{2} \mathscr C_\gamma \bar q.
\end{equation}
\end{subequations} 

\section{The reconstruction method}\label{sect:rec}

\subsection{From DtN measurements to GPST}\label{subsec:DtNGPST}
The first step of the proposed reconstruction method is to  recover the GPST of the cavities from the DtN measurements.
\begin{definition}
\label{def_K}
Let $\mathsf K_{\Gamma}^{\gamma}$ and $\mathsf K_{\gamma}^{\Gamma}$ be the operators:
$$
\mathsf K_{\Gamma}^{\gamma}: q\in  \mathcal H(\Gamma)\longmapsto {\mathsf{Tr}}^0_{\gamma}(\mathscr S_{\Gamma} {\hat q})\in  \mathcal H(\gamma),
\qquad
\mathsf K_{\gamma}^{\Gamma}: p\in  \mathcal H(\gamma)\longmapsto {\mathsf{Tr}}^0_{\Gamma}(\mathscr S_{\gamma} {\hat p})\in \mathcal H(\Gamma),
$$
where ${\mathsf{Tr}}^0_{\gamma}$ and ${\mathsf{Tr}}^0_{\Gamma}$ are given in Definition~\ref{def:Tr0}.
We define the boundary interaction operator between $\Gamma$ and $\gamma$ as the operator $\mathsf K:=\mathsf K_\gamma^\Gamma\mathsf K_\Gamma^\gamma$.
\end{definition}
One of the  main results of \cite{MunRam17} (see Theorem 3.1 therein ) is to provide a relation between the measurements and the boundary interaction operator $\mathsf K:=\mathsf K_\gamma^\Gamma\mathsf K_\Gamma^\gamma$ in the case of a single cavity. The preliminary results of Section \ref{sect:EI} immediately lead to a generalization of this relation to the case of multiple cavities (see Theorem \ref{first_decom} below). For the proofs, we refer the interested reader to \cite[Section 2.2.]{MunRam17}. 

\begin{proposition}\label{prop:KgG}
The boundary interaction operators $\mathsf K_{\Gamma}^{\gamma}$ and $\mathsf K_{\gamma}^{\Gamma}$ enjoy the following properties:
\begin{enumerate}
\item 
If $p\in \mathcal H(\gamma)$, then $q:={\mathsf Tr}_\Gamma(\SL{\gamma}\hat p)$ belongs to $\mathcal H(\Gamma)$.
\item 
Operators $\mathsf K_\Gamma^\gamma$ and $\mathsf K_\gamma^\Gamma$ are compact, one-to-one and dense-range operators. Moreover, for every functions $q\in \mathcal H(\Gamma)$ and $p\in \mathcal H(\gamma)$, we have:
\begin{equation}\label{eq:adjoint}
\langle \mathsf K_\Gamma^{\gamma} q,p\ranglepp{\gamma} = \langle  q,\mathsf K^\Gamma_\gamma p\ranglepp{\Gamma}.
\end{equation}
\item
The norms of the operators $\mathsf K_\Gamma^\gamma$ and $\mathsf K_\gamma^\Gamma$ are strictly less that 1.
\end{enumerate}
\end{proposition}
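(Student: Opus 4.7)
The strategy rests on three recurring tools: Fubini applied to the symmetric kernel $G(x-y)$; the zero-circulation property of densities in $\widehat{\mathcal H}(\gamma)$ and $\widehat{\mathcal H}(\Gamma)$; and the isometric identification of $\mathcal H(\gamma)$ with a subspace of $W^1_0(\mathbb R^2)$ via the single-layer potential.

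For (1), since $\Gamma$ is simply connected, $\mathcal H(\Gamma)$ is the kernel of the single functional $q\mapsto\langle\hat e_\Gamma,q\ranglemp{\Gamma}$, where $\hat e_\Gamma$ is the equilibrium density on $\Gamma$. By Fubini and the symmetry of $G$, $\langle\hat e_\Gamma,{\mathsf{Tr}}_\Gamma(\SL{\gamma}\hat p)\ranglemp{\Gamma}=\langle\hat p,{\mathsf{Tr}}_\gamma(\SL{\Gamma}\hat e_\Gamma)\ranglemp{\gamma}$, and the right-hand side vanishes because $\SL{\Gamma}\hat e_\Gamma$ is harmonic in $\Omega$ with constant trace on $\Gamma$, hence constant throughout $\Omega$ and in particular on each $\gamma_k$, while $\langle\hat p_k,1\rangle=0$ by the definition of $\widehat{\mathcal H}(\gamma)$.

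For (2), compactness of both operators is immediate because $G(x-y)$ is $C^\infty$ on the disjoint compact sets $\Gamma\times\gamma$. The adjoint identity \eqref{eq:adjoint} follows by another Fubini calculation: from Definition~\ref{inner} and the symmetry of the inner product, $\langle\mathsf K_\Gamma^\gamma q,p\ranglepp{\gamma}=\langle\hat p,\mathsf K_\Gamma^\gamma q\ranglemp{\gamma}$; the correction ${\mathsf{Tr}}_\gamma(\SL{\Gamma}\hat q)-\mathsf K_\Gamma^\gamma q$ is a linear combination of the $1_{\gamma_k}$, annihilated by $\hat p\in\widehat{\mathcal H}(\gamma)$, so this equals $\langle\hat p,{\mathsf{Tr}}_\gamma(\SL{\Gamma}\hat q)\ranglemp{\gamma}$; swapping integrations via Fubini and invoking part (1) then yields $\langle q,\mathsf K_\gamma^\Gamma p\ranglepp{\Gamma}$. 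For injectivity of $\mathsf K_\Gamma^\gamma$: if $\mathsf K_\Gamma^\gamma q=0$ then $u:=\SL{\Gamma}\hat q$ has piecewise-constant trace on $\gamma$, is harmonic on the whole connected open set $\Omega$, and equals the constant $c_k$ on each cavity $\omega_k$; real-analyticity (unique continuation) forces $u$ to be a single constant $c$ throughout $\Omega$. Green's identity in the exterior, together with the decay $u=O(|x|^{-1})$ (since $\langle\hat q,1\rangle_\Gamma=0$) and the jump $[\partial_n u]_\Gamma=-\hat q$, gives $\int_{\mathbb R^2\setminus\overline{\Omega}}|\nabla u|^2=c\,\langle\hat q,1\rangle_\Gamma=0$, so $c=0$ and $\hat q=0$. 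Injectivity of $\mathsf K_\gamma^\Gamma$ is the mirror argument: the exterior Dirichlet problem kills $\SL{\gamma}\hat p$ in $\mathbb R^2\setminus\overline{\Omega}$, unique continuation propagates the zero across $\Gamma$ into the connected set $\omega^c$, and the interior Dirichlet problem finishes it in each $\omega_k$. Dense range of each operator then follows from injectivity of the other via \eqref{eq:adjoint}.

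For (3), the key observation is that $V:=\SL{\gamma}\widehat{\mathsf K_\Gamma^\gamma q}$ is the $W^1_0(\mathbb R^2)$-orthogonal projection of $u:=\SL{\Gamma}\hat q$ onto the closed subspace of single-layer potentials on $\gamma$ with densities in $\widehat{\mathcal H}(\gamma)$. I would check this by splitting the Dirichlet integral across $\gamma$ and applying the jump relations: for every $\hat r\in\widehat{\mathcal H}(\gamma)$, both $\int_{\mathbb R^2}\nabla u\cdot\nabla(\SL{\gamma}\hat r)$ and $\int_{\mathbb R^2}\nabla V\cdot\nabla(\SL{\gamma}\hat r)$ reduce to $\langle\hat r,\mathsf K_\Gamma^\gamma q\ranglemp{\gamma}$, the zero-circulation of $\hat r$ again discarding the projection correction. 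The isometry then yields $\|\mathsf K_\Gamma^\gamma q\|_{\frac{1}{2},\gamma}\le\|q\|_{\frac{1}{2},\Gamma}$, with equality forcing $u$ itself into that subspace; but then $u$ would be a single layer on $\gamma$ and hence smooth across $\Gamma$, contradicting the jump $[\partial_n u]_\Gamma=-\hat q\neq 0$ whenever $q\neq 0$. Because $\mathsf K_\Gamma^\gamma$ is compact between Hilbert spaces, $(\mathsf K_\Gamma^\gamma)^*\mathsf K_\Gamma^\gamma$ attains its largest eigenvalue, so the strict pointwise inequality upgrades to $\|\mathsf K_\Gamma^\gamma\|<1$; the bound $\|\mathsf K_\gamma^\Gamma\|<1$ then follows from \eqref{eq:adjoint}. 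I expect the main obstacle to be the bookkeeping in (2): the two duality pairings and the projection $\Pi_\gamma$ must be organized so that the zero-circulation property of $\widehat{\mathcal H}(\gamma)$ eliminates the gauge correction at precisely the right moment.
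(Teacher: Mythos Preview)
Your proposal is correct and complete. The paper itself does not prove Proposition~\ref{prop:KgG}; it simply states that ``the proofs are given in \cite[Section 2.2.]{MunRam17} for the case of a simply connected cavity, but they can be easily extended to the multiply connected case.'' What you have written is precisely that extension, and the ingredients you use (Fubini on the symmetric kernel $G$, the zero-circulation conditions to kill the $\Pi_\gamma$ correction, unique continuation from $\omega_k$ to all of $\Omega$, and the interpretation of $\mathscr S_\gamma\widehat{\mathsf K_\Gamma^\gamma q}$ as the $W^1_0$-orthogonal projection of $\mathscr S_\Gamma\hat q$) are the standard potential-theoretic tools one expects in the referenced argument, so there is no meaningful divergence in approach.

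Two minor remarks on presentation. In the injectivity argument for $\mathsf K_\Gamma^\gamma$, the inference ``$\int_{\mathbb R^2\setminus\overline\Omega}|\nabla u|^2=0$, so $c=0$'' silently uses that $u$ is then constant in the exterior and tends to zero at infinity by \eqref{asymptSC}, whence $u\equiv 0$ outside and $c=0$ by continuity of the single-layer trace; you should make that step explicit. In the injectivity argument for $\mathsf K_\gamma^\Gamma$, you implicitly invoke part~(1) to drop the projection $\Pi_\Gamma$ (so that $\mathsf K_\gamma^\Gamma p=0$ really gives ${\mathsf{Tr}}_\Gamma(\mathscr S_\gamma\hat p)=0$, not merely a constant); flagging this use of~(1) would tighten the exposition.
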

Note that the first assertion in Proposition \ref{prop:KgG}  shows that ${\mathsf{Tr}}^0_{\Gamma}$ can be replaced by ${\mathsf{Tr}}_{\Gamma}$ in the definition of $\mathsf K_{\gamma}^{\Gamma}$.\\
Going back to the DtN operator $\Lambda_\gamma$ of problem \eqref{main_problem}, and according to \eqref{free_circ}, we have by Green's formula
$$
\langle \partial_n u^f, 1\ranglemp{{\Gamma}} = -\langle \partial_n u^f, 1\ranglemp{{\gamma}}  =0,
$$
which shows that  $\Lambda_\gamma$ is valued in ${\widehat {\mathcal H}}({\Gamma})$. Considering data $f\in {\mathcal H}({\Gamma})$, we can thus define the DtN operator $\Lambda_\gamma$ as follows:
\begin{equation}
\label{D_to_N}
\Lambda_\gamma:f\in \mathcal H(\Gamma)\longmapsto \partial_nu^f\in {\widehat {\mathcal H}}({\Gamma}).
\end{equation}
Let us denote by $\Lambda_0$ the DtN map $\Lambda_\gamma$ in the case where  $\omega=\varnothing$ (the cavity free problem).
\begin{theorem}
\label{first_decom}
The two following bounded linear operators in $\mathcal H(\Gamma)$:
$$
\mathsf R:=\mathsf S_\Gamma(\Lambda_\gamma-\Lambda_0)\qquad\text{and}\qquad \mathsf K:=\mathsf K_\gamma^\Gamma\mathsf K_\Gamma^\gamma,
$$
satisfy the following equivalent identities:
\begin{equation}
\label{factor:main}
\mathsf R=({\rm Id}-\mathsf K)^{-1}{\mathsf K},
\qquad\qquad
\mathsf K=({\rm Id}+\mathsf R)^{-1}\mathsf R.
\end{equation}
\end{theorem}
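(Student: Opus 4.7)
The plan is to derive both identities by applying Green's third identity separately to the cavity solution $u^f$ in $\Omega\setminus\overline\omega$ and to the cavity-free solution $u_0^f$ in $\Omega$, taking traces on $\Gamma$ and on $\gamma$ (with the projector $\Pi_\gamma$), and finally inverting ${\rm Id}-\mathsf K$ by a Neumann series justified by Proposition \ref{prop:KgG}(3).

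For $f\in\mathcal H(\Gamma)$, set $\hat p:=\partial_n u^f|_\gamma$, which belongs to $\widehat{\mathcal H}(\gamma)$ by the circulation-free condition \eqref{free_circ}. Green's representation formula for $u^f$ harmonic in $\Omega\setminus\overline\omega$, with the outward unit normal $n$ (pointing into $\omega$ along $\gamma$), reads
\[
u^f=\mathscr S_\Gamma(\Lambda_\gamma f)+\mathscr S_\gamma\hat p-\mathscr D_\Gamma f-\mathscr D_\gamma(u^f|_\gamma)\quad\text{in }\Omega\setminus\overline\omega.
\]
The key geometric observation is that, with this normal convention, $\mathscr D_{\gamma_k}(1)$ vanishes identically in $\Omega\setminus\overline\omega$; hence the piecewise constant trace $u^f|_\gamma=(c_k^f)_k$ contributes nothing to the last term. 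The same formula applied to $u_0^f$ harmonic in $\Omega$ gives $u_0^f=\mathscr S_\Gamma(\Lambda_0 f)-\mathscr D_\Gamma f$, and subtracting yields
\[
u^f-u_0^f=\mathscr S_\Gamma((\Lambda_\gamma-\Lambda_0)f)+\mathscr S_\gamma\hat p\quad\text{in }\Omega\setminus\overline\omega.
\]

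Taking the trace on $\Gamma$ makes the left-hand side vanish; by Proposition \ref{prop:KgG}(1), the trace of $\mathscr S_\gamma\hat p$ already lies in $\mathcal H(\Gamma)$, so no projection is needed and we obtain $\mathsf R f+\mathsf K_\gamma^\Gamma p=0$ with $p:=\mathsf S_\gamma\hat p$. Taking instead the trace on $\gamma$ and applying $\Pi_\gamma$, the vector of constants $u^f|_\gamma$ is annihilated by $\Pi_\gamma$ (direct from its explicit formula), and $\Pi_\gamma(u_0^f|_\gamma)=\mathsf K_\Gamma^\gamma f$: indeed, writing $u_0^f=\mathscr S_\Gamma\hat\eta$ in $\Omega$ with $\hat\eta\in\widehat{\mathcal H}(\Gamma)$ defined by $\mathsf S_\Gamma\hat\eta=f$ gives $u_0^f|_\gamma={\rm Tr}_\gamma(\mathscr S_\Gamma\hat\eta)$, and Definition \ref{def_K} yields the claim. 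Since $p\in\mathcal H(\gamma)$ is fixed by $\Pi_\gamma$, this trace produces $p=-\mathsf K_\Gamma^\gamma({\rm Id}+\mathsf R)f$.

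Substituting into the first relation yields $\mathsf R=\mathsf K({\rm Id}+\mathsf R)$, i.e.\ $({\rm Id}-\mathsf K)\mathsf R=\mathsf K$. Proposition \ref{prop:KgG}(3) gives $\|\mathsf K\|\le\|\mathsf K_\gamma^\Gamma\|\,\|\mathsf K_\Gamma^\gamma\|<1$, so ${\rm Id}-\mathsf K$ is boundedly invertible and $\mathsf R=({\rm Id}-\mathsf K)^{-1}\mathsf K$; the resolvent identity ${\rm Id}+\mathsf R=({\rm Id}-\mathsf K)^{-1}$ then gives $\mathsf K=({\rm Id}+\mathsf R)^{-1}\mathsf R$. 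The main obstacle is the careful bookkeeping of signs and normal conventions in Green's identity (the orientation of $n$ on $\gamma$ is what makes the constant double-layer vanish in $\Omega\setminus\overline\omega$), together with verifying that each traced quantity lands in the correct subspace so that the operators $\mathsf K_\Gamma^\gamma$, $\mathsf K_\gamma^\Gamma$, and $\Pi_\gamma$ apply with their stated definitions.
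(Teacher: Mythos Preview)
Your argument is correct. The paper does not give a self-contained proof of Theorem~\ref{first_decom} but simply refers to \cite[Section~2.2]{MunRam17}; your derivation via Green's representation for $u^f$ and $u_0^f$, taking traces on $\Gamma$ and on $\gamma$ (with $\Pi_\gamma$), and inverting ${\rm Id}-\mathsf K$ using Proposition~\ref{prop:KgG}(3) is precisely the natural route and matches that reference's approach.
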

The above result shows that the knowledge of the DtN maps $\Lambda_\gamma$ and $\Lambda_0$ respectively corresponding to the cases with and without the cavities, entirely determines the boundary interaction operator $\mathsf K$. Using \eqref{eq:adjoint}, it is worth reformulating the second identity in \eqref{factor:main} in a variational form:
\begin{equation}
\label{factor:main_varia}
\langle \mathsf K_\Gamma^\gamma f,\mathsf K_\Gamma^\gamma g\ranglepp{\gamma} = \langle ({\rm Id}+\mathsf R)^{-1}\mathsf R f,g\ranglepp{\Gamma},\qquad\forall\,f,g\in \mathcal H(\Gamma).
\end{equation}
This identity can be used to compute the entries of the so-called polarization tensors of the multiply connected  cavity. To make this statement precise, let us introduce the following definition. 
\begin{definition}\label{def:PQ}
Identifying $x=(x_{1},x_{2})$ in $\mathbb R^2$ with the complex number $z=x_1+ix_2$, we define for every $m\geqslant 1$,  the harmonic polynomials of degree $m$:
$$P_1^m(x)=\Re\left(z^m\right)\qquad\text{and}\qquad P_2^m(x)=\Im\left(z^m\right).$$
We define as well
\begin{equation}\label{eq:Qm12}
Q_{1,\Gamma}^m:= {\mathsf{Tr}}^0_{\Gamma} (P_1^m) \qquad\text{and}\qquad 
Q_{2,\Gamma}^m:= {\mathsf{Tr}}^0_{\Gamma} (P_2^m) 
\end{equation}
where the projected trace operator $ {\mathsf{Tr}}^0_{\Gamma}$ is defined in Definition \ref{def:Tr0} (recall that this projector simply amounts to adding a suitable constant to the considered function). \\
Finally, we set
\begin{equation}\label{eq:Qm}
Q_{\Gamma}^m:= Q_{1,\Gamma}^m+i Q_{2,\Gamma}^m.
\end{equation}
\end{definition}

The crucial point about these polynomials $Q_{\ell,\Gamma}^m$, $\ell=1,2$, lies in the fact that since they are traces of harmonic functions, we have
$$
\mathsf K_{\Gamma}^{\gamma}(Q_{\ell,\Gamma}^m) = Q_{\ell,\gamma}^m,
$$
and hence, applying formula \eqref{factor:main_varia} with $Q_{\ell,\Gamma}^m$, we obtain that for all $m,m'\geqslant 1$ and all $\ell,\ell'=1,2$:
$$\langle \mathsf K_\Gamma^\gamma Q_{\ell,\Gamma}^m,\mathsf K_\Gamma^\gamma Q^{m'}_{\ell',\Gamma}\ranglepp{\gamma}=\langle Q_{\ell,\gamma}^m, Q^{m'}_{\ell',\gamma}\ranglepp{\gamma}.
$$
These quantities are strongly connected with the so-called Generalized P\'olya-Szeg\"o Tensors (GPST) used in \cite{MunRam17} to reconstruct a single cavity. Unfortunately, the reconstruction method proposed there rests on the Riemann mapping theorem, which does not apply in the multiply connected case considered in this paper. We propose in the next section a new reconstruction method to recover the geometry of the cavities from the available data, namely the real quantities $\langle Q^m_{\ell,\gamma}, Q^{m'}_{\ell',\gamma} \rangle_{\frac 12, \gamma}$, or equivalently the complex quantities $\langle Q^m_{\gamma}, Q^{m'}_{\gamma} \rangle_{\frac 12, \gamma}$ (see \eqref{eq:psc}). In fact, as already pointed out in \cite{MunRam17}, the terms $\langle Q^m_{\gamma}, Q^{1}_{\gamma} \rangle_{\frac 12, \gamma}$ contain enough information to recover the unknown geometry.

\subsection{Towards a harmonic moments problem}
\label{subsec:moment}
The second step of the reconstruction is to recast the shape-from-GPST problem (namely reconstructing $\gamma$ from the quantities $\langle Q^m_{\gamma}, Q^{1}_{\gamma} \rangle_{\frac 12, \gamma}$) as a moments problem. Let us recall that the classical moments problem consists in recovering an unknown measure with support in $K\subset \mathbb C$ from its moments. The literature on this problem is very rich and covers a wide range of questions (solvability, uniqueness and reconstruction) and settings (dimension one or higher dimensions, arbitrary measures or measures absolutely continuous with respect to the Lebesgue measure, full or partial (harmonic or truncated) set of moments,...). Proposing a complete review is thus clearly beyond the scope of this paper. Let us simply make a few comments and quote some references. As far as we know, there is no framework to tackle this problem in full generality and most contributions address particular issues. The solvability of the moment problem in the cases $K=[0,+\infty[$, $K=\{|z|=1\}$, $K=\mathbb R$ and $K=[0,1]$ has been answered by the classical theorems of Stieltjes, Toeplitz, Hamburger and Hausdorff (see for instance Curto and Fialkow \cite{CurFia91} and references therein). In dimension two, Davis studied the reconstruction of a triangle from four moments \cite{Dav77}, Milanfar {\it et al.} \cite{MilVerCle95} investigated the reconstruction of arbitrary polygons and Putinar \cite{Put88} provided solvability conditions for the moments problem in the complex plane and pointed out some nice connections with quadrature domains (two-dimensional domains which are uniquely determined by finitely many of their moments, see \cite{Ebe05}). Finally, two-dimensional  shape reconstruction algorithms have been also proposed \cite{GolMilVar99,GusHeMil00,MilPutVar00,GusPutSaf09}.\\
For the problem studied in this paper, the connection between the shape-from-GPST problem and the  shape-from-moments problem is based on the existence of a Borel measure $\nu$ supported in the cavity $\omega$ such that ($z$ denotes  the variable in the complex plane)
\begin{equation}\label{eq:paramount}
\frac{1}{m} \langle Q^m_{\gamma}, Q^{1}_{\gamma} \rangle_{\frac 12, \gamma} = \int_{\omega} z^{m-1}\, {\rm d} \nu, \qquad \forall\,m\geqslant 1.
\end{equation}
We have been able to prove this formula only in two special cases: for a simply connected cavity and in the case of two disks. Still, the numerical results (see Section \ref{sect:num}) obtained for arbitrary cavities using the reconstruction method based on this formula (see subsection \ref{sect:algo}) are conclusive.

This is why we state the following conjecture.
\begin{conj}\label{conj}
Let $\omega=\cup_{k=1}^N\omega_k$ be a multiply connected set, where the sets $\omega_{k}$, for $k=1,\dots,N$ are non intersecting simply connected open domains with $C^{1,1}$  boundaries $\gamma_{k}$ and $\overline{\omega}\subset\Omega$. We set $\gamma=\cup_{k=1}^N\gamma_k$. Then, there exists a Borel measure $\nu$ supported in $\omega$ such that:
$$\langle f, Q^{1}_{\gamma}  \rangle_{\frac12,\gamma}=\int_\omega F'{\rm d}\nu,$$
for every trace $f\in \mathcal H^{\frac{1}{2}}(\gamma)$ of a holomorphic function $F$ in $\omega$. 
\end{conj}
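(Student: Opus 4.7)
The plan is to reduce $\langle f, Q^{1}_\gamma\rangle_{\frac{1}{2},\gamma}$ to a contour integral using harmonic conjugates, extract the Lebesgue-area component of $\nu$ from the interior trace, and reduce the remaining exterior piece to an integral over $\omega$ by contour deformation. Throughout I identify $x=(x_1,x_2)\in\mathbb R^2$ with $z=x_1+ix_2$. Applying Lemma~\ref{lem:dc2} with $p=f$ and $q=Q^{1}_\gamma$, and writing $u=\mathscr S_\gamma\hat f$, $v=\mathscr S_\gamma\widehat{Q^1_\gamma}$ together with their componentwise harmonic conjugates $\widetilde u,\widetilde v$ (so that $\check{Q^1_\gamma}=[\widetilde v]_\gamma=\widetilde v^--\widetilde v^+$), one has
\[
\langle f,Q^1_\gamma\rangle_{\frac{1}{2},\gamma}=-\langle\partial_n\widetilde u,\check{Q^1_\gamma}\rangle_{-\frac{1}{2},\frac{1}{2},\gamma}.
\]
Since $F$ is holomorphic in each $\omega_k$ with trace $f$, uniqueness of the Dirichlet problem gives $u|_{\omega_k}=F$, hence $\widetilde u|_{\omega_k}=-iF$ up to an irrelevant constant. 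Combining $\partial_n F=F'\cdot n$ (with the complex unit normal $n=n_1+in_2$), the orientation identity $n\,d\sigma=i\,dz$ on $\gamma$ (each $\gamma_k$ oriented counter-clockwise around $\omega_k$), and the continuity of $\partial_n\widetilde u$ across $\gamma$ proved in Lemma~\ref{lem:dc1}, one gets the clean relation $\partial_n\widetilde u(z)\,d\sigma=F'(z)\,dz$, so that
\[
\langle f,Q^1_\gamma\rangle_{\frac{1}{2},\gamma}=-\int_\gamma F'(z)\,\overline{\widetilde v^-(z)}\,dz+\int_\gamma F'(z)\,\overline{\widetilde v^+(z)}\,dz.
\]

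The interior contribution is explicit: the projection $\Pi_\gamma$ forces $v|_{\omega_k}=z-C_k$ for some $C_k\in\mathbb C$, so $\widetilde v^-|_{\omega_k}=-iz+c_k$ and $\overline{\widetilde v^-(z)}=i\bar z+\bar c_k$ on $\gamma_k$. The $\bar c_k$ pieces drop because each $\gamma_k$ is closed ($\int_{\gamma_k}F'(z)\,dz=0$), and integration by parts together with Stokes' theorem $\int_\gamma F\,d\bar z=-2i\int_\omega F'\,dA$ yields $\int_\gamma\bar z\,F'(z)\,dz=2i\int_\omega F'\,dA$. The $\widetilde v^-$-piece therefore equals $2\int_\omega F'(z)\,dA(z)$, providing the Lebesgue component $2\,dA|_\omega$ of $\nu$.

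The remaining exterior piece
\[
I:=\int_\gamma F'(z)\,\overline{\widetilde v^+(z)}\,dz
\]
must also take the form $\int_\omega F'\,d\mu$ for some Borel measure $\mu$ supported in $\omega$. The circulation-free condition $\widehat{Q^1_\gamma}\in\widehat{\mathcal H}(\gamma)$ ensures that $\widetilde v^+$ is well-defined in $\omega^c$ and decays at infinity. The strategy is to express $\overline{\widetilde v^+}|_\gamma$ as the boundary trace of a function that extends into $\omega$ with a controlled singular set (poles or branch cuts), and then to deform the contour of $I$ inward around these singularities. For a single simply connected cavity, the Riemann map from $\omega^c\cup\{\infty\}$ onto the exterior of the unit disk---equivalently the Schwarz function $S$ of $\gamma$, satisfying $S(z)=\bar z$ on $\gamma$---supplies this extension, and the deformation recovers $\mu$ as residue/branch-cut contributions (a Dirac mass at the center for a disk, a line measure on the focal segment for an ellipse). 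For two disks, the analogous bipolar/conformal computation gives $\mu$ in closed form. For small cavities, a matched asymptotic expansion in the cavity diameters identifies $\mu$ to leading order as Dirac masses at the centers, weighted by the polarization tensors of the limiting point cavities.

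The main obstacle is precisely this last step in full generality: in a general multiply connected configuration, Schwarz-type extensions of $\overline{\widetilde v^+}|_\gamma$ typically do not continue analytically across the whole interior of $\omega$, and the multiple connectivity introduces period obstructions, so no canonical construction produces a measure $\mu$ whose singular set depends only on the geometry of $\omega$. This is exactly why the statement is posed here as a conjecture; the numerical experiments in Section~\ref{sect:num} nevertheless provide strong evidence for its validity.
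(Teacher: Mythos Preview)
The statement is a \emph{conjecture}: the paper does not prove it in general, and only establishes the special cases $N=1$ and two disks (Theorem~\ref{thm:conj}, proved in Section~\ref{sect:proof}). Your write-up correctly recognises this and ends by explaining why the exterior piece is the obstruction; so as a ``proof'' of the full conjecture it is, like the paper, incomplete by design.

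Your reduction via Lemma~\ref{lem:dc2} to a contour integral in $F'$ is exactly the starting point the paper uses for the two-disk case (compare your displayed formula with \eqref{eq:zf1}). Your subsequent decomposition $\check{Q^1_\gamma}=\widetilde v^-|_\gamma-\widetilde v^+|_\gamma$ and the computation showing that the interior trace contributes the universal Lebesgue piece $2\,dA|_\omega$ are correct and are a genuinely useful reformulation that does \emph{not} appear in the paper: the paper never isolates an absolutely continuous part common to all geometries. This cleanly localises the difficulty to the single term $I=\int_\gamma F'(z)\,\overline{\widetilde v^+(z)}\,dz$, which is pedagogically sharper than the paper's treatment.

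Where you and the paper diverge is in the special cases. For $N=1$ the paper does not use the Schwarz function at all; it starts from the exterior Riemann map $\phi$, integrates by parts on the circle, and then replaces $\phi^{-1}$ on $\gamma$ by a harmonic (non-conformal) diffeomorphism $\eta:\omega\to\mathbb D$ in order to push the boundary integral into $\omega$, obtaining the absolutely continuous measure $d\nu=4a_1^2|D\eta|\,dA$. Your Schwarz-function route would (when it works) typically produce a different representing measure, with singular support on the Schwarz singularities, which is perfectly legitimate since $\nu$ is non-unique on holomorphic test functions; but you have only sketched it. For two disks the paper proceeds by an explicit iterative construction of the double-layer density $q$ (Proposition~\ref{prop:zkappaq}) and then evaluates the contour integral term by term via residues (Theorem~\ref{thm:quadrature2disks}), yielding an atomic measure supported on two sequences of points accumulating inside the disks; your ``bipolar/conformal computation'' points in the same direction but is not carried out.

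In summary: your framework is correct, shares its first step with the paper, and adds the clean $2\,dA$ observation; but for the two cases the paper actually proves, your arguments remain at the level of sketches and follow different (unexecuted) routes from the paper's detailed constructions.
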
 
As mentioned above, we have obtained the following result, whose proof is given in Section \ref{sect:proof}. In particular, the expression of the measure $\nu$ is given therein. 
\begin{theorem}\label{thm:conj}
Conjecture \ref{conj} is true for $N=1$ (single cavity)  and when $\omega$ is constituted of two non intersecting disks.
\end{theorem}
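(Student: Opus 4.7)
The plan is to apply Lemma \ref{lem:dc2} to the pair $(f, Q^1_\gamma)$ and then exploit the holomorphy of $F$ in $\omega$ to convert the resulting duality pairing into a complex contour integral that can be analyzed by Cauchy/Stokes. Writing $u$ and $v$ for the single layer potentials of $f$ and $Q^1_\gamma$, and $\widetilde u, \widetilde v$ for their complex harmonic conjugates in $\mathbb R^2\setminus\gamma$, Lemma \ref{lem:dc2} gives
$$\langle f, Q^1_\gamma\rangle_{\frac 12,\gamma} = -\langle \partial_n \widetilde u, \check{Q^1_\gamma}\rangle_{-\frac 12,\frac 12,\gamma},\qquad \check{Q^1_\gamma}:=[\widetilde v]_\gamma.$$
Since $f = F|_\gamma$ with $F$ holomorphic in $\omega$, uniqueness of the Dirichlet problem forces $u|_\omega = F$, so that (up to an additive constant) $\widetilde u|_\omega = -iF$. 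The Cauchy--Riemann equations then yield $\partial_n \widetilde u|_\gamma = -i(n_1+in_2)F'|_\gamma$, and the elementary identity $(n_1+in_2)\,\mathrm d\sigma = \pm i\,\mathrm dz$ (depending on the orientation/normal convention) rewrites the pairing as a complex contour integral of $F'$ against $\overline{\check{Q^1_\gamma}}$ along $\gamma$.

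\textbf{Splitting the jump.} Decompose $\check{Q^1_\gamma} = \widetilde v^- - \widetilde v^+$ into its interior/exterior traces. The same interior argument applied to $v$ gives $v|_\omega = z - c$ (with $c$ the projection constant that ensures $Q^1_\gamma\in\mathcal H(\gamma)$), hence $\widetilde v^-|_\omega = -i(z-c)$ up to an additive constant. Consequently $\overline{\widetilde v^-}$ has $\partial_{\bar z}$-derivative equal to $i$ on $\omega$, and the Cauchy--Pompeiu formula converts the interior contribution to a scalar multiple of $\int_\omega F'\,\mathrm dA$, producing the Lebesgue part of the measure $\nu$. What remains is the exterior contribution
$$I := \int_\gamma F'(z)\,\overline{\widetilde v^+(z)}\,\mathrm dz.$$
The crucial observation is that if the boundary trace $\overline{\widetilde v^+}|_\gamma$ coincides with the boundary trace of a function $M$ \emph{meromorphic} in $\omega$ with finitely many poles $\{a_j\}_j$, then the residue theorem evaluates $I$ as a finite linear combination of the values $F'(a_j)$, which adds Dirac contributions to $\nu$ at the points $a_j$.

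\textbf{The two explicit cases.} For a single simply connected cavity ($N=1$), the meromorphic extension $M$ is produced via the Riemann mapping theorem. Let $\Phi:\{|\zeta|>1\}\cup\{\infty\}\to\omega^c\cup\{\infty\}$ be the exterior Riemann map with $\Phi(\infty)=\infty$, and expand $\Phi$ in a Laurent series at infinity. Pulling $v^+$ back through $\Phi$, the decaying harmonic extension $V^+:=v^+\circ\Phi$ can be computed explicitly term by term, yielding a canonical splitting $V^+ = A + \bar B$ into functions holomorphic in $|\zeta|>1$, and a direct computation shows that $\overline{\widetilde V^+}|_{|\zeta|=1}$ is the boundary value of a function with only a simple pole inside $\mathbb D$; translating back via the interior Riemann map $\varphi:\mathbb D\to\omega$, one obtains a meromorphic $M$ on $\omega$ with a single pole at $z_0:=\varphi(0)$, whose residue depends explicitly on $\varphi'(0)$. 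This yields $\nu$ as the sum of a Lebesgue density and a Dirac mass at $z_0$, in agreement with the formula already exploited in \cite{MunRam17}. For two disjoint disks $\omega = B_{R_1}(z_1)\cup B_{R_2}(z_2)$, the Schwarz function $S_k(z) = \bar z_k + R_k^2/(z-z_k)$ of each circle is globally rational, and the \emph{method of images} applied to each $\gamma_k$ in turn constructs $v^+$ (and hence $\widetilde v^+$) as a convergent series of rational functions, whose sum extends meromorphically to $\omega_1\cup\omega_2$ with simple poles only at $z_1$ and $z_2$ (convergence of the series being geometric because the disks are disjoint). The residue theorem then yields $\nu$ as an explicit combination of Lebesgue measure on $\omega_1\cup\omega_2$ and Dirac masses at $z_1$ and $z_2$.

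\textbf{Main obstacle.} In both cases the crux is the production of a meromorphic representative of $\overline{\widetilde v^+}|_\gamma$ inside $\omega$, which is a Schwarz-function-type property of $\gamma$. This property is elementary when $\omega$ is conformally equivalent to $\mathbb D$ via a Riemann map (so that the exterior problem is solved on an explicit Laurent series), and it is equally elementary when $\gamma$ consists of circles (globally rational Schwarz functions, handled by reflections), but it has no obvious analogue for a generic multiply connected $\omega$ whose components are non-circular. This is precisely the gap that prevents the full conjecture from being established and explains why it is verified only numerically beyond the two cases treated here.
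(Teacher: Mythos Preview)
Your overall strategy---reduce $\langle f,Q^1_\gamma\rangle_{\frac12,\gamma}$ via Lemma~\ref{lem:dc2} to a contour integral of $F'$ against a density, then analytically continue that density into $\omega$---is sound and is in fact the route the paper takes for the two-disk case. But your execution of the two-disk part contains a genuine error.

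\textbf{Two disks: the poles are not where you say.} You assert that the method of images produces a meromorphic extension ``with simple poles only at $z_1$ and $z_2$''. This is false. Reflecting across $\gamma_1$ sends a pole at $z_2$ to the image point $z_1+\rho_1^2/(\overline{z_1-z_2})$, which lies strictly between $z_1$ and $z_2$ (not at $z_1$); reflecting that in $\gamma_2$ produces yet another new point, and so on. The iterated images form two infinite sequences $(z_{1,n})_{n\geqslant 0}$ and $(z_{2,n})_{n\geqslant 0}$ on the line through the centers, converging to the two fixed points of the composite M\"obius map. The paper carries this out explicitly (Proposition~\ref{prop:zkappaq}): it constructs the sequences $z_{j,n}$ and the associated coefficients $\kappa_{j,n}$, proves the $\kappa_{j,n}$ decay geometrically (Lemma~\ref{estim_Q}), and obtains
\[
\nu=\sum_{j=1,2}\sum_{n\geqslant 0} c_{j,n}\,\delta_{z_{j,n}},
\]
a \emph{countably infinite} purely atomic measure with no Lebesgue part (Theorem~\ref{thm:quadrature2disks}). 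Your two-pole claim would only hold in the degenerate limit of infinitesimally small disks. The substantive work you are missing is exactly this convergence analysis of the image series.

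\textbf{Single cavity: a different but incomplete route.} For $N=1$ the paper does \emph{not} proceed via Lemma~\ref{lem:dc2}. It starts from the explicit formula $\langle f,Q^1_\gamma\rangle_{\frac12,\gamma}=2a_1\int_{-\pi}^\pi e^{-it}F(\phi(e^{it}))\,{\rm d}t$ (with $\phi$ the exterior Riemann map), integrates by parts, and then pushes the integral into $\omega$ using the \emph{harmonic} (non-conformal) diffeomorphism $\delta:\mathbb D\to\omega$ obtained by conjugating the negative-index part of $\phi$. The result is ${\rm d}\nu=4a_1^2|D\eta|\,|{\rm d}\xi|$, a measure absolutely continuous with respect to Lebesgue, with no Dirac mass at all (Theorem~\ref{thm:conj1}). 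Your proposed Lebesgue-plus-one-Dirac representation is not a priori inconsistent with this (harmonic moments do not determine $\nu$ uniquely), but your sketch is too thin to verify: you do not actually compute $\widetilde v^+$ from the exterior map, nor explain why pulling back to $\mathbb D$ and then pushing forward through the \emph{interior} Riemann map $\varphi$ yields a single simple pole rather than an essential singularity or a branch point. That step needs a concrete calculation, not an assertion.
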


The main interest of identity \eqref{eq:paramount} lies in the fact that it bridges two classical inverse problems, namely the Calder\`on's conductivity inverse problem and the historical moments problem. However, our moment problem has two features that makes it difficult to solve. First, we only have at our disposal the harmonic moments (i.e. the sequence $\int_{\omega} z^{m}\, {\rm d} \nu$, and not the doubly indexed sequence $\int_{\omega} z^{m}\bar z^{n}\, {\rm d} \nu$). Second, the involved measure is generally not of the form ${\rm d}\nu=\1{\omega}\, {\rm d}x$ which is the most studied in the literature. 

In order to have some insight on what this measure might represent, let us consider the particular case where the cavity is constituted of a collection of small inclusions. There is a wide literature dealing with this case and we refer the interested reader to the book by Ammari and Kang \cite{AmmKan04}. For small disks  of centers $z_{i}$ and  radii $\varepsilon \rho_{i}$, $i=1,\dots,N$ ($\varepsilon>0$ being a small parameter), and for the boundary conditions considered in this work, the DtN map denoted $\Lambda_\varepsilon$ admits the following asymptotic expansion (apply for instance \cite[Theorem 2.1]{MunRam17}, in the case of non moving disks):
$$
\Lambda_\varepsilon=\Lambda_0+\varepsilon^2\Lambda_2+O(\varepsilon^3),
$$
where 
\begin{equation}\label{devDTN}
\langle\Lambda_2 f_{\Gamma},g_{\Gamma}\ranglemp{\Gamma} =2\pi \sum_{i=1}^N \rho^2_i\, F'(z_i)\overline{G'(z_i)}.
\end{equation}
In the above formula, $f_{\Gamma},g_{\Gamma}\in \mathcal H^{\frac{1}{2}}(\Gamma)$ denote respectively the traces on $\Gamma$ of holomorphic functions $F, G$ defined in the cavity $\omega_{\varepsilon}$. 
With the notation of Theorem \ref{first_decom}, this expansion shows that $\mathsf R_{\varepsilon}=\mathsf S_\Gamma(\Lambda_\varepsilon-\Lambda_0) = \varepsilon^2\mathsf S_\Gamma\Lambda_2 +O(\varepsilon^3)$ and thus
$$ 
\mathsf K_{\varepsilon}=(\rm{Id}+\mathsf R_{\varepsilon})^{-1}\mathsf R_{\varepsilon} = \varepsilon^2\mathsf S_\Gamma\Lambda_2 +O(\varepsilon^3).
$$
Hence, we have
$$
\langle f_{\gamma_{\varepsilon}}, Q^{1}_{\gamma_{\varepsilon}}  \rangle_{\frac12,\gamma_{\varepsilon}} = 
\langle \mathsf K_{\varepsilon} f_{\Gamma}, Q^{1}_{\Gamma}\ranglepp{\Gamma} =
\varepsilon^2\langle \Lambda_2  f_{\Gamma}, Q^{1}_{\Gamma}\ranglemp{\Gamma} +O(\varepsilon^3)
$$
Thanks to \eqref{devDTN}, the above formula reads
$$
\langle f_{\gamma_{\varepsilon}}, Q^{1}_{\gamma_{\varepsilon}}  \rangle_{\frac12,\gamma_{\varepsilon}} = 
2\pi\varepsilon^2 \sum_{i=1}^N \rho^2_i\, F'(z_i)+O(\varepsilon^3)=
\int_\omega F'{\rm d}\nu+O(\varepsilon^3),
$$
provided we set 
$$
\nu=\sum_{i=1}^n c_i \delta_{z_i}, \qquad\mbox{with } c_{i}= 2\pi(\varepsilon\rho_i)^2.
$$
This formula suggests that the measure $\nu$ can be approximated by an atomic measure, involving the centers and the radii of the unknown disks. 

\subsection{The reconstruction algorithm}
\label{sect:algo}
Going back to the general case of an arbitrary cavity, it is thus natural to seek an approximation of the measure $\nu$ appearing in Conjecture \ref{conj} in the form $ \nu^*=\sum_{i=1}^n c_i \delta_{z_i}$ for some integer $n$, where the weights $c_{i}$ are positive and the points $z_{i}$ are distinct.  

To do so, we equalize the first $2n$ complex moments:
\begin{equation}\label{eq:prony}
\tau_{m}=\sum_{i=1}^n c_i z^m_i, \qquad \forall\,m=0,\cdots,2n-1
\end{equation}
where 
$$
\tau_{m}:=\int_{\omega} z^{m}\, {\rm d} \nu, \qquad \forall\,m=0,\cdots,2n-1.
$$
The non linear system \eqref{eq:prony} with $2n$ unknowns is usually referred to as Prony's system. To solve it, we follow the method proposed by Golub {\it et al.} \cite{GolMilVar99}. For all integer $0\leqslant m\leqslant 2n-1$, let:
$$
V = \begin{pmatrix} 1 & 1 &\cdots & 1 \\ z_1&z_2&\cdots &z_n\\
\vdots & \vdots & \ddots & \vdots \\ z_1^{n-1} & z_2^{n-1} & \cdots& z_{n}^{n-1}\end{pmatrix},
$$
and 
$$
H_{0}=\begin{pmatrix} \tau_0 & \tau_1 &\cdots & \tau_{n-1} \\ \tau_1& \tau_2&\cdots &\tau_n\\
\vdots & \vdots & \ddots & \vdots \\ \tau_{n-1} & \tau_n & \cdots& \tau_{2n-2}\end{pmatrix},
\qquad
H_{1}=\begin{pmatrix} \tau_1 & \tau_2 &\cdots & \tau_{n} \\ \tau_2& \tau_3&\cdots &\tau_{n+1}\\
\vdots & \vdots & \ddots & \vdots \\ \tau_{n} & \tau_{n+1} & \cdots& \tau_{2n-1}\end{pmatrix}.
$$
Setting $\mathbf c=(c_{i})_{1\leqslant i\leqslant n}$ and $\mathbf z=(z_{i})_{1\leqslant i\leqslant n}$, we easily see that if the pair $(\mathbf z,\mathbf c)$ solves System \eqref{eq:prony}, then the points $z_{i}$, $1\leqslant i\leqslant n$, are the eigenvalues of the generalized eigenvalue problem
\begin{equation}\label{eq:vpg}
H_{1}\mathbf x = \lambda\,H_{0}\mathbf x.
\end{equation}
Once the $(z_{i})_{1\leqslant i\leqslant n}$ have been determined, the weights $(c_{i})_{1\leqslant i\leqslant n}$ can be easily computed (by solving the linear system \eqref{eq:prony} or computing the diagonal elements of the matrix $V^{-1}H_{0}V^{-T}$).

Considering the particular case of small disks described in subsection \ref{subsec:moment}, the cavity will be reconstructed by drawing the disks of centers $(z_{i})_{1\leqslant i\leqslant n}$ and radii $(\rho_{i})_{1\leqslant i\leqslant n}$, with $\rho_{i}=\sqrt{|c_{i}|/2\pi}$ (see Section \ref{sect:num} for some examples).

\section{Proof of Theorem \ref{thm:conj}}
\label{sect:proof}

\subsection{Connected obstacle of arbitrary shape}
If the cavity has one connected component, then according to  the Riemann mapping theorem, its boundary $\gamma$ can be parameterized by  $t\in]-\pi,\pi]\mapsto \phi(e^{it})$, where
$$
\phi:z\mapsto a_1 z+a_0+\sum_{m\leqslant -1} a_m z^m,
$$
maps the exterior of the unit disk $\mathbb D$ onto the exterior of $\omega$ (see the book of Pommerenke \cite[p.~5]{Pom92}). Let us emphasize that $|a_1|$ is the logarithmic capacity of $\gamma$ and can be chosen such that $a_1>0$, while $a_0$ is  the conformal center of $\omega$. According to \cite[Lemma 3.5]{MunRam17} (see formula (3.11a)), we have
$$
\langle Q^m_{\gamma}, Q^{1}_{\gamma} \rangle_{\frac 12, \gamma} = 2a_1\int_{-\pi}^\pi e^{-it}\phi^m(e^{it})\,{\rm d}t,\qquad m\geqslant 1.
$$
This formula can be easily generalized for every trace $f\in \mathcal H^{\frac{1}{2}}(\gamma)$ of a holomorphic function $F$ in $\omega$ as follows:
\begin{equation}\label{eq:fQ1}
\langle f, Q^{1}_{\gamma} \rangle_{\frac 12, \gamma} = 2a_1\int_{-\pi}^\pi e^{-it}F(\phi(e^{it}))\,{\rm d}t.
\end{equation}
\begin{theorem}\label{thm:conj1}
Assume that $\omega$ is a simply connected domain with $C^{1,1}$  boundary $\gamma$. Then, there exists a $C^\infty$ diffeomorphism $\eta$ that maps $\omega$ onto the unit disk $\mathbb D $ such that:
$$\langle f, Q^{1}_{\gamma} \rangle_{\frac 12, \gamma}=4a_1^2\int_\omega F'(\xi) |D\eta|\,|{\rm d}\xi|.$$
In particular, Conjecture \ref{conj} is true for $N=1$ with ${\rm d}\nu=4a_1^2 |D\eta|\,|{\rm d}\xi|$.
\end{theorem}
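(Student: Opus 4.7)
The plan is to produce $\eta$ explicitly by extending $\phi$ from $\partial\mathbb{D}$ into $\mathbb{D}$ via an antiholomorphic reflection, and then to reduce the boundary-integral formula \eqref{eq:fQ1} to an area integral over $\omega$ using Stokes' theorem.

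\textbf{Construction of $\eta$.} On $\partial\mathbb{D}$ one has $z^{-k}=\bar z^{k}$, so the natural extension of $\phi|_{\partial\mathbb{D}}$ to the closed disk is
$$
\widetilde\phi(v):=a_1 v+a_0+\sum_{k\geq 1}a_{-k}\bar v^{k},\qquad v\in\overline{\mathbb{D}}.
$$
The decisive feature of this extension is $\partial_v\widetilde\phi\equiv a_1$; its real Jacobian is $|a_1|^2-|\partial_{\bar v}\widetilde\phi(v)|^2$. Since $\overline{\partial_{\bar v}\widetilde\phi}$ is holomorphic in $\mathbb{D}$, the maximum modulus principle reduces the positivity of this Jacobian on $\overline{\mathbb{D}}$ to an inequality on $\partial\mathbb{D}$. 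A direct Laurent calculation gives $\partial_{\bar v}\widetilde\phi(v)=v^2(a_1-\phi'(v))$ for $|v|=1$, so the required condition becomes $|\phi'(v)-a_1|<|a_1|$ there, to be justified using the univalence of $\phi$ on $\overline{\mathbb{D}^c}$ and the $C^{1,1}$ regularity of $\gamma$. A standard degree argument combined with the boundary homeomorphism $\widetilde\phi|_{\partial\mathbb{D}}=\phi|_{\partial\mathbb{D}}$ then shows that $\widetilde\phi:\overline{\mathbb{D}}\to\overline\omega$ is a global diffeomorphism; set $\eta:=\widetilde\phi^{-1}$.

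\textbf{Reduction of the integral.} Starting from \eqref{eq:fQ1}, parametrize $u=e^{it}\in\partial\mathbb{D}$ so that $e^{-it}\,dt=i\,d\bar u$; then $\int_{-\pi}^{\pi} e^{-it}F(\phi(e^{it}))\,dt=i\oint_{\partial\mathbb{D}} F(\widetilde\phi(u))\,d\bar u$. Applying the Stokes identity $\oint_{\partial\mathbb{D}} G\,d\bar u=-2i\int_{\mathbb{D}}\partial_u G\,du$ to $G=F\circ\widetilde\phi$, together with the Wirtinger chain rule and the holomorphy of $F$, collapses $\partial_u G$ to $a_1 F'(\widetilde\phi(u))$ --- the crucial point being $\partial_v\widetilde\phi\equiv a_1$. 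Multiplying by $2a_1$ yields
$$
\langle f,Q^1_\gamma\rangle_{\frac{1}{2},\gamma}=4a_1^2\int_{\mathbb{D}} F'(\widetilde\phi(u))\,du.
$$
The change of variables $u=\eta(\xi)$, $du=|D\eta(\xi)|\,|d\xi|$, converts the disk integral into $\int_\omega F'(\xi)|D\eta|\,|d\xi|$, as required. The positive measure $d\nu:=4a_1^2|D\eta|\,|d\xi|$ is supported in $\omega$, giving Conjecture~\ref{conj} for $N=1$.

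\textbf{Main difficulty.} The key obstacle is verifying that the antiholomorphic reflection $\widetilde\phi$ is globally a diffeomorphism (and not merely locally invertible with possible folds). Once the Jacobian positivity is settled, the Stokes reduction proceeds almost automatically, precisely because $\partial_v\widetilde\phi$ is identically equal to the constant $a_1$, which is exactly what makes the Wirtinger chain rule collapse to $a_1 F'(\widetilde\phi(u))$ without any leftover antiholomorphic contribution.
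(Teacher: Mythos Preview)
Your map $\widetilde\phi$ is exactly the harmonic extension that the paper calls $\delta$, and your $\eta=\widetilde\phi^{-1}$ is the paper's $\eta=\delta^{-1}$; so the construction is identical. The computational part, however, is genuinely different and shorter: the paper first integrates by parts on $\partial\mathbb D$, transports the integral to $\gamma$ via $\psi=\phi^{-1}$, identifies normal derivatives of $\Re F$ and $\Im F$, applies Green's formula in $\omega$, and then does a Wirtinger calculation to show $\partial_\xi\eta=a_1|D\eta|$. Your single application of Stokes on $\mathbb D$ together with the observation $\partial_v\widetilde\phi\equiv a_1$ bypasses all of that and lands directly on $4a_1^2\int_{\mathbb D}F'(\widetilde\phi(u))\,du$, after which the change of variables is immediate. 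This is a cleaner derivation of the same formula.

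The gap is where you yourself flag it. Your reduction of Jacobian positivity to the boundary inequality $|\phi'(v)-a_1|<a_1$ on $|v|=1$ is correct, but the assertion that this inequality ``is to be justified using the univalence of $\phi$ and the $C^{1,1}$ regularity of $\gamma$'' is not substantiated. The area theorem only gives $\sum_{k\geq 1}k|a_{-k}|^2\leq a_1^2$, which controls an $H^2$-type norm of $\partial_{\bar v}\widetilde\phi$ but not its sup on $\partial\mathbb D$; and the inequality $|\phi'-a_1|<a_1$ forces in particular $\Re\phi'>0$ on $|v|=1$, a condition that is not a consequence of univalence plus boundary smoothness for general (non-convex) $\omega$. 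The Rado--Kneser--Choquet theorem guarantees that harmonic extensions of boundary homeomorphisms are diffeomorphisms only when the target is convex, and counterexamples exist for non-convex targets, so some additional argument specific to this particular boundary parametrisation would be needed. The paper, for its part, simply declares $\delta$ to be a diffeomorphism without proof, so you have not lost anything relative to the original --- but you should be aware that the justification you sketch does not close the gap.
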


\begin{remark}
Note that the diffeomorphism $\eta$ is not a conformal mapping (not even a harmonic function) and satisfies $\eta(\xi)=\phi^{-1}(\xi)$ on $\gamma=\partial\omega$.
\end{remark}

\begin{proof}[Proof of Theorem \ref{thm:conj1}]
Integrating by parts in \eqref{eq:fQ1}, we get that:
$$\langle f, Q^{1}_{\gamma} \rangle_{\frac 12, \gamma}=2a_1\int_{-\pi}^\pi \phi'(e^{it})F'(\phi(e^{it}))\,{\rm d}t=2a_1\int_{-\pi}^\pi \frac{\phi'(e^{it})}{|\phi'(e^{it})|}F'(\phi(e^{it}))\,|\phi'(e^{it})|{\rm d}t.$$
Denoting by $\psi=\phi^{-1}$ the map that conformally sends the exterior of $\omega$ onto the exterior of the unit disk $\mathbb D$, we obtain:
$$\langle f, Q^{1}_{\gamma} \rangle_{\frac 12, \gamma}=2a_1\int_\gamma \frac{|\psi'(\xi)|}{\psi'(\xi)}F'(\xi){\rm d}|\xi|=2a_1\int_\gamma \overline{\psi(\xi)}F'(\xi)\psi(\xi)\frac{|\psi'(\xi)|}{\psi'(\xi)}{\rm d}|\xi|,$$
where we have used the fact that $|\psi(\xi)|=1$. In the right hand side, we recognize:
$$\psi(\xi)\frac{|\psi'(\xi)|}{\psi'(\xi)} = n_1(\xi)+in_2(\xi),$$
where $n=(n_1,n_2)$ is the unit normal to $\gamma$. We deduce that:
\begin{align*}
\Re\left(F'(\xi)\psi(\xi)\frac{|\psi'(\xi)|}{\psi'(\xi)}\right)=\partial_n\Re(F(\xi))\\
\Im\left(F'(\xi)\psi(\xi)\frac{|\psi'(\xi)|}{\psi'(\xi)}\right)=\partial_n\Im(F(\xi)).
\end{align*}
Hence
\begin{multline*}
\hspace{2cm}
\frac{1}{2a_1}\langle f, Q^{1}_{\gamma} \rangle_{\frac 12, \gamma}=\int_\gamma \Re(\psi(\xi))\partial_n\Re(F(\xi))+\Im(\psi(\xi))\partial_n\Im(F(\xi)){\rm d}|\xi|\\
+i\int_\gamma \Re(\psi(\xi))\partial_n\Im(F(\xi))-\Im(\psi(\xi))\partial_n\Re(F(\xi)){\rm d}|\xi|.\hspace{2cm}
\end{multline*}
At this point, we would like to integrate by parts but $\psi$ is not defined inside $\omega$. We introduce the harmonic  (but non holomorphic) diffeomorphism:
$$\delta(z,\bar z)=a_1z+a_0+a_{-1}\bar z+a_{-2}\bar z^2+\ldots$$
defined from $\mathbb D$ onto $\omega$ and such that 
$$\delta(z,\bar z)=\phi(z)\text{ for }z\in\partial\mathbb D.$$
We define as well its inverse $\eta = \delta^{-1}$ (which in not even harmonic  but only $C^\infty$) and we have:
$$\eta(\xi,\bar \xi)=\psi(\xi)\text{ for }\xi\in\gamma.$$
Tolerating  a slight abuse of notation, we shall write $\eta(x,y)=\eta_1(x,y)+i\eta_2(x,y)=\eta(\xi,\bar\xi)$, for $\xi=x+iy$. We can now integrate by parts to get:
\begin{multline*}
\hspace{2cm}
\frac{1}{2a_1}\langle f, Q^{1}_{\gamma} \rangle_{\frac 12, \gamma}=\int_\omega\nabla \eta_1(x,y)\cdot\nabla\Re(F(\xi))+\nabla \eta_2(x,y)\cdot\nabla\Im(F(\xi))|{\rm d}\xi|\\
+i\int_\omega \nabla \eta_1(x,y)\cdot\nabla\Im(F(\xi))-\nabla \eta_2(x,y)\cdot\nabla\Re(F(\xi))|{\rm d}\xi|.
\hspace{2cm}
\end{multline*}
Recall that $\nabla\Re(F(\xi))=(\Re(F'(\xi)),-\Im(F'(\xi)))$ and $\nabla\Im(F(\xi))=(\Im(F'(\xi)),\Re(F'(\xi)))$. We deduce that:
\begin{multline*}
\hspace{2cm}
\frac{1}{2a_1}\langle f, Q^{1}_{\gamma} \rangle_{\frac 12, \gamma}=\int_\omega\Re(F'(\xi))\left(\frac{\partial\eta_1}{\partial x}+\frac{\partial\eta_2}{\partial y}\right)+\Im(F'(\xi))\left(-\frac{\partial\eta_1}{\partial y}+\frac{\partial\eta_2}{\partial x}\right)|{\rm d}\xi|\\
+i\int_\omega\Re(F'(\xi))\left(\frac{\partial\eta_1}{\partial y}-\frac{\partial\eta_2}{\partial x}\right)+\Im(F'(\xi))\left(\frac{\partial\eta_1}{\partial x}+\frac{\partial\eta_2}{\partial y}\right)|{\rm d}\xi|,
\hspace{2cm}
\end{multline*}
which can be simply rewritten as:
$$\frac{1}{2a_1}\langle f, Q^{1}_{\gamma} \rangle_{\frac 12, \gamma}=\int_\omega F'(\xi) \left[\left(\frac{\partial\eta_1}{\partial x}+\frac{\partial\eta_2}{\partial y}\right)+i\left(\frac{\partial\eta_1}{\partial y}-\frac{\partial\eta_2}{\partial x}\right)\right]|{\rm d}\xi|.$$
On the one hand, we have:
\begin{align*}
\frac{\partial\eta}{\partial x}&=\frac{\partial\eta_1}{\partial x}+i\frac{\partial\eta_2}{\partial x}\\
\frac{\partial\eta}{\partial y}&=\frac{\partial\eta_1}{\partial y}+i\frac{\partial\eta_2}{\partial y}.
\end{align*}
Hence, we deduce that (see \cite[p.~3]{Dur04})
\begin{align*}
\frac{\partial\eta_1}{\partial x}+\frac{\partial\eta_2}{\partial y}&=\Re\left(\left(\frac{\partial}{\partial x}-i\frac{\partial }{\partial y}\right)\eta\right)=2\Re\left(\frac{\partial\eta}{\partial\xi}\right)\\
\frac{\partial\eta_1}{\partial y}-\frac{\partial\eta_2}{\partial x}&=\Re\left(\left(\frac{\partial}{\partial y}+i\frac{\partial }{\partial x}\right)\eta\right)=-\Im\left(\left(\frac{\partial}{\partial x}-i\frac{\partial }{\partial y}\right)\eta\right)=-2\Im\left(\frac{\partial\eta}{\partial\xi}\right),
\end{align*}
and therefore:
\begin{equation}\label{eq:mumeta}
\frac{1}{4a_1}\langle f, Q^{1}_{\gamma} \rangle_{\frac 12, \gamma}=\int_\omega F'(\xi) \overline{\left(\frac{\partial\eta}{\partial\xi}\right)}|{\rm d}\xi|.
\end{equation}
On the other hand, using the chain rule formulae (see \cite[p.~5]{Dur04}), we have:
\begin{align*}
\frac{\partial}{\partial z}(\eta\circ\delta)&=\frac{\partial\eta}{\partial\xi}\frac{\partial\delta}{\partial z}+\frac{\partial\eta}{\partial\bar\xi}\overline{\frac{\partial\delta}{\partial\bar z}}=1\\
\frac{\partial}{\partial \bar z}(\eta\circ\delta)&=\frac{\partial\eta}{\partial\xi}\frac{\partial\delta}{\partial \bar z}+\frac{\partial\eta}{\partial\bar\xi}\overline{\frac{\partial\delta}{\partial z}}=0.
\end{align*}
Eliminating $\dfrac{\partial\eta}{\partial\bar\xi}$ from the above relations, we get that
\begin{equation}\label{eq:etadelta}
\left(\left|\frac{\partial\delta}{\partial z}\right|^2-\left|\frac{\partial\delta}{\partial \bar z}\right|^2\right)\frac{\partial\eta}{\partial\xi}=\overline{\frac{\partial\delta}{\partial z}}.
\end{equation}
But, on the one hand, a direct calculation shows that $\dfrac{\partial\delta}{\partial z}=a_1$, while on the other hand, we know (see \cite[p.~5]{Dur04}) that for a harmonic diffeomorphism, we have: 
$$\left|\frac{\partial\delta}{\partial z}\right|^2-\left|\frac{\partial\delta}{\partial \bar z}\right|^2=|D\delta|=|D\eta|^{-1}.$$
Substituting these relations in \eqref{eq:etadelta}, we obtain that 
$$\frac{\partial\eta}{\partial\xi}=a_1|D\eta|.$$
The conclusion follows then from \eqref{eq:mumeta}.
\end{proof}

\subsection{The case of two disks}
We assume here that $\omega=\omega_{1}\cup \omega_{2}$, where $\omega_1$ and $\omega_2$ are two disks centered at $z_1$ and $z_2$ respectively and with radii $\rho_1$ and $\rho_2$. We set $\rho:=|z_1-z_2|$ and we denote by $\omega_j^c=\mathbb C\setminus\overline{\omega_j}$  ($j=1,2$).
\begin{proposition}\label{prop:zkappaq}
There exists two sequences of points $(z_{1,n})_{n\geqslant 0}$ and $(z_{2,n})_{n\geqslant 0}$ and four sequences of complex numbers $(\kappa_{1,n})_{n\geqslant 1}$, $(\kappa_{2,n})_{n\geqslant 1}$, $(\alpha_{1,n})_{n\geqslant 0}$ and $(\alpha_{2,n})_{n\geqslant 0}$ such that the following properties hold true:
\begin{enumerate}
\item $z_{1,n}\in \omega_1$  and $z_{2,n}\in \omega_2$ for all $n\geqslant 0$.
\item The densities defined by
$$
q_{j,0}(z)=2(z-z_j),
$$
and for all $n\geqslant 1$:
$$
q_{1,n}(z)=\frac{\kappa_{1,n}}{z-z_{2,n-1}} +\alpha_{1,n},
\qquad\qquad
q_{2,n}(z)=\frac{\kappa_{2,n}}{z-z_{1,n-1}} +\alpha_{2,n},
$$
are such that for $j=1,2$ the series $\sum_{n\geqslant 0} q_{j,2n} + \overline{q}_{j,2n+1}$ converges in $H^{\frac12}(\gamma_j)$ to some limit $q_{j}$.
\item The limits $q_1\in H^{\frac12}(\gamma_1)$ and $q_2\in H^{\frac12}(\gamma_2)$ are such that
$$
\mathscr D_{\gamma_1}q_1(\xi)+\mathscr D_{\gamma_2}q_2(\xi)=
\begin{cases}
\xi-z_1&\text{in }$\omega_1$\\
\xi-z_2&\text{in }$\omega_2.$
\end{cases}
$$
\end{enumerate}
\end{proposition}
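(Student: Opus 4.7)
The plan is to build $q_1$ and $q_2$ iteratively by a method of reflections, each term correcting the double-layer sum inside one of the disks while producing a smaller residual in the other. I would begin with the base case. Using the identity $\mathscr D_\gamma q = \tfrac12 \mathscr C_\gamma q + \tfrac12 \overline{\mathscr C_\gamma \bar q}$ from \eqref{magic}, Remark~\ref{rem:Cauchyf}, and the elementary relation $\bar z = \bar z_j + \rho_j^2/(z - z_j)$ valid on $\gamma_j$, a direct computation gives
\begin{equation*}
\mathscr D_{\gamma_j}(2(z-z_j))(\xi) = \begin{cases} \xi - z_j, & \xi \in \omega_j, \\ -\rho_j^2/(\bar\xi - \bar z_j), & \xi \in \omega_j^c. \end{cases}
\end{equation*}
Thus $q_{j,0}(z)=2(z-z_j)$ matches the target inside $\omega_j$ but leaves an anti-holomorphic residual $-\rho_j^2/(\bar\xi - \bar z_j)$ in the opposite disk $\omega_k$, $k\neq j$.

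Next I would define the sequences $z_{j,0}=z_j$ and, for $n\geq 1$,
$$z_{1,n} := z_1 + \rho_1^2/\overline{(z_{2,n-1}-z_1)}, \qquad z_{2,n} := z_2 + \rho_2^2/\overline{(z_{1,n-1}-z_2)},$$
the iterated reflections across the opposite circle. Property~(1) is immediate since reflection across $\gamma_j$ sends $\omega_j^c$ into $\omega_j$. The core of the recursion is the analogue of the base computation for a rational density $q(z) = \kappa/(z-a) + \alpha$ on $\gamma_j$ with $a\in\omega_k$; the same tools yield
\begin{equation*}
\mathscr D_{\gamma_j}(\bar q)(\xi) = \begin{cases} \dfrac{\bar\kappa}{2(\bar\xi-\bar a)} + \bar\alpha + \dfrac{\bar\kappa}{2(\bar z_j-\bar a)}, & \xi \in \omega_j, \\[4pt] \dfrac{\bar\kappa\,\rho_j^2}{2(\bar z_j-\bar a)^2(\xi - a^*)}, & \xi \in \omega_j^c, \end{cases}
\end{equation*}
where $a^* = z_j + \rho_j^2/(\bar a - \bar z_j)\in\omega_j$ is the reflection of $a$ across $\gamma_j$. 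Hence $\bar q$ cancels an anti-holomorphic residual $-\mu/(\bar\xi-\bar a)$ inside $\omega_j$ for the unique choice $\bar\kappa=2\mu$, $\bar\alpha=-\mu/(\bar z_j-\bar a)$, producing a new \textit{holomorphic} residual in $\omega_k$ with pole at $a^*\in\omega_j$. A symmetric computation for the un-conjugated $\mathscr D_{\gamma_j}(q)$ shows the even-indexed corrections cancel holomorphic residuals and introduce new anti-holomorphic ones with poles at the next reflected points. This determines $\kappa_{j,n},\alpha_{j,n}$ inductively, with the poles tracking the reflection sequence, yielding property~(2) once convergence is shown.

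To complete the argument I would show that $\sum_n(q_{j,2n} + \bar q_{j,2n+1})$ converges geometrically in $H^{\frac12}(\gamma_j)$ and pass to the limit. At each reflection step the leading coefficient is multiplied by a factor bounded by $\rho_j^2/|\bar z_{k,n-1}-\bar z_{j,n-1}|^2$, which is strictly smaller than $1$ since the disks are disjoint. Moreover, the composition of the two circular reflections is a M\"obius transformation whose iterates converge geometrically to an attracting fixed point lying strictly inside each $\omega_j$, so the poles $z_{k,n-1}$ remain uniformly bounded away from $\gamma_j$ and the $H^{\frac12}(\gamma_j)$-norms of the building blocks $1/(z - z_{k,n-1})$ are uniformly controlled. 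Consequently the series converges geometrically, and passing to the limit in the truncated identity $\mathscr D_{\gamma_1}(S_N^{(1)}) + \mathscr D_{\gamma_2}(S_N^{(2)}) = (\xi - z_j) + \varepsilon_N(\xi)$ in $\omega_j$ (with $\varepsilon_N\to 0$), combined with continuity of the double-layer operator, yields property~(3). The main difficulty lies in this convergence analysis: while the geometric decay of the $\kappa_{j,n}$ is natural from the contractive nature of iterated circle reflections between two disjoint disks, turning this intuition into clean $H^{\frac12}$-estimates requires careful bookkeeping of the recursion and uniform control on the pole locations.
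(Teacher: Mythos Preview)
Your proposal is correct and follows essentially the same method-of-images construction as the paper: the reflection sequences you define coincide with the paper's $z_{j,n}$ (the paper writes them in the equivalent barycentric form \eqref{eq:zjn} via auxiliary real parameters $\lambda_{j,n}$), and your double-layer computations reproduce the telescoping identity the paper derives. The only substantive difference is in the convergence step: where you appeal to the loxodromic dynamics of the composed circle inversions to obtain geometric decay of the $\kappa_{j,n}$ and uniform separation of the poles from $\gamma_j$, the paper instead proves this through explicit recursions and estimates on the $\lambda_{j,n}$ (Lemmas~\ref{lem:conv} and~\ref{estim_Q}), which yield the quantitative contraction ratio $4\Lambda_1\Lambda_2/(1-\Lambda_1-\Lambda_2+\sqrt{\Lambda})^2<1$; your dynamical argument is equivalent and more conceptual, while the paper's version is more self-contained and gives the rate explicitly.
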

The proof of this result is constructive: the sequences $(z_{j,n})_{n\geqslant 0}$, $(\kappa_{j,n})_{n\geqslant 1}$ and $(\alpha_{j,n})_{n\geqslant 0}$ ($j=1,2$) are respectively given by formulae \eqref{eq:zjn}, \eqref{eq:kappajn} and \eqref{eq:alphajn}. In order to prove the above Proposition, let us  introduce some further notation and prove two lemmas. Setting:
$$\Lambda_j=\left(\frac{\rho_j}{\rho}\right)^2\qquad j=1,2$$
let us define the two real sequences $(\lambda_{1,n})_{n\geqslant 0}$ and $(\lambda_{2,n})_{n\geqslant 0}$ by:
$$
\lambda_{1,0}=\lambda_{2,0}=1,
$$
and for $n\geqslant 1$
$$
\lambda_{1,n}=1-\frac{\Lambda_2}{\lambda_{2,n-1}}\qquad\qquad
\lambda_{2,n}=1-\frac{\Lambda_1}{\lambda_{1,n-1}},
$$
It is worth noticing that 
$$
\lambda_{1,n}=1-\frac{\Lambda_2}{1-\displaystyle\frac{\Lambda_1}{\lambda_{1,n-2}}}.
$$
\begin{lemma}
\label{lem:conv}
The real sequences $(\lambda_{1,n})_{n\geqslant 0}$ and $(\lambda_{2,n})_{n\geqslant 0}$ are convergent. More precisely, we have:
$$\lim_{n\to +\infty}\lambda_{1,n}=\frac{1}{2}\left[1+\Lambda_1-\Lambda_2+\sqrt{\Lambda}\right]\quad\text{and}\quad
\lim_{n\to +\infty}\lambda_{2,n}=\frac{1}{2}\left[1+\Lambda_2-\Lambda_1+\sqrt{\Lambda}\right],$$
where $\Lambda: = |\Lambda_1- \Lambda_2|^2+1-2(\Lambda_1+\Lambda_2)>0$.
Moreover, for every $n\geqslant 0$:
\begin{align*}
1-\sqrt{\Lambda_2}&<\frac{1}{2}\left[1+\Lambda_1-\Lambda_2+\sqrt{\Lambda}\right]<\lambda_{1,n}\leqslant 1,\\
1-\sqrt{\Lambda_1}&<\frac{1}{2}\left[1+\Lambda_2-\Lambda_1+\sqrt{\Lambda}\right]<\lambda_{2,n}\leqslant 1.
\end{align*}
\end{lemma}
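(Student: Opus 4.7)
The plan is to recognize that each sequence satisfies a second-order M\"obius recursion, and to exploit the rigidity of M\"obius dynamics. Substituting $\lambda_{2,n-1} = 1 - \Lambda_1/\lambda_{1,n-2}$ into the defining relation $\lambda_{1,n} = 1 - \Lambda_2/\lambda_{2,n-1}$ gives the closed second-order iteration
\[
\lambda_{1,n} = g(\lambda_{1,n-2}), \qquad g(x):=\frac{(1-\Lambda_2)x - \Lambda_1}{x-\Lambda_1},
\]
and $\lambda_{2,n}$ satisfies the analogous recursion with $\Lambda_1$ and $\Lambda_2$ exchanged. Each of the even- and odd-indexed subsequences of $(\lambda_{1,n})$ is therefore the orbit under $g$ of an explicit initial value ($1$ and $1-\Lambda_2$ respectively).

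First I would compute the fixed points of $g$: the equation $g(x)=x$ reduces to the quadratic $x^2-(1+\Lambda_1-\Lambda_2)x+\Lambda_1=0$, whose two roots are exactly the candidate limits $\ell_\pm:=\tfrac12[1+\Lambda_1-\Lambda_2\pm\sqrt{\Lambda}]$. The crucial positivity $\Lambda>0$ comes from the factorization
\[
\Lambda=\bigl(1-(\sqrt{\Lambda_1}+\sqrt{\Lambda_2})^2\bigr)\bigl(1-(\sqrt{\Lambda_1}-\sqrt{\Lambda_2})^2\bigr),
\]
both factors being positive since the disks $\omega_1$ and $\omega_2$ are disjoint, i.e.\ $\rho_1+\rho_2<\rho$, which translates into $\sqrt{\Lambda_1}+\sqrt{\Lambda_2}<1$. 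Vieta's relations $\ell_+\ell_-=\Lambda_1$ and $(1-\ell_+)(1-\ell_-)=\Lambda_2$, combined with $\Lambda_1,\Lambda_2\in(0,1)$, then force $0<\ell_-<\ell_+<1$.

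For convergence I would use the standard M\"obius linearization: the change of variable $w=(x-\ell_+)/(x-\ell_-)$ conjugates $g$ to the dilation $w\mapsto k\,w$ with multiplier $k=g'(\ell_+)=\Lambda_1\Lambda_2/(\ell_+-\Lambda_1)^2$. Using Vieta one rewrites $k=\ell_-(1-\ell_+)/[\ell_+(1-\ell_-)]\in(0,1)$ (the inequality is immediate from $\ell_-<\ell_+$), so both the even and the odd subsequences of $(\lambda_{1,n})$ converge geometrically to $\ell_+$, which gives $\lambda_{1,n}\to\tfrac12[1+\Lambda_1-\Lambda_2+\sqrt{\Lambda}]$. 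The same argument applied to $h$ yields the corresponding limit for $\lambda_{2,n}$.

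Finally, for the pointwise bounds: the upper bound $\lambda_{j,n}\leq 1$ is an immediate induction since each $\lambda_{j,n}$ is of the form $1-(\text{positive})$. For the strict lower bound $\lambda_{1,n}>\ell_+$, the key observation is that $g$ is increasing on $(\Lambda_1,+\infty)\supset(\ell_+,1]$ (as $g'(x)=\Lambda_1\Lambda_2/(x-\Lambda_1)^2>0$) and fixes $\ell_+$, so the interval $(\ell_+,1]$ is $g$-invariant; one checks directly that both $\lambda_{1,0}=1$ and $\lambda_{1,1}=1-\Lambda_2$ lie in this interval (the latter using $\Lambda_1+\Lambda_2<1$, which follows from $\sqrt{\Lambda_1}+\sqrt{\Lambda_2}<1$), and the induction concludes. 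The remaining inequality $\ell_+>1-\sqrt{\Lambda_2}$ is purely algebraic and reduces, after isolating $\sqrt{\Lambda}$ and squaring, to the positivity of $\sqrt{\Lambda_2}\bigl[(1-\sqrt{\Lambda_2})^2-\Lambda_1\bigr]$, again equivalent to the disjointness condition. The main obstacle I expect is the careful bookkeeping of these algebraic inequalities and the invariance argument: everything hinges, ultimately, on the single geometric hypothesis $\sqrt{\Lambda_1}+\sqrt{\Lambda_2}<1$.
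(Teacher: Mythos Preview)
Your proof is correct and, in fact, considerably more complete than the paper's own argument. The paper's proof explicitly carries out only two of the steps you do: the factorization of $\Lambda$ (written there in terms of $\rho_1,\rho_2,\rho$, which is exactly your factorization $\Lambda=(1-(\sqrt{\Lambda_1}+\sqrt{\Lambda_2})^2)(1-(\sqrt{\Lambda_1}-\sqrt{\Lambda_2})^2)$) to obtain $\Lambda>0$, and the algebraic verification that $1-\sqrt{\Lambda_2}<\ell_+$. The actual convergence of the sequences and the intermediate bound $\ell_+<\lambda_{1,n}\leqslant 1$ are left essentially implicit in the paper, with only the sentence ``the convergence depends on the sign of $\Lambda$'' as justification.

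Your route via M\"obius dynamics---identifying the fixed points of $g$, conjugating to the dilation $w\mapsto kw$ with $k=\ell_-(1-\ell_+)/[\ell_+(1-\ell_-)]\in(0,1)$, and using the $g$-invariance of $(\ell_+,1]$ to trap both the even and odd subsequences---is a clean and self-contained way to fill in precisely what the paper omits. It also makes transparent the geometric rate of convergence, which the paper needs implicitly in the subsequent Lemma on the decay of $|\kappa_{j,n}|$. The only place to be careful is the logical order: the ``immediate'' upper bound $\lambda_{j,n}\leqslant 1$ presupposes positivity of the previous term, so it is cleanest to deduce \emph{both} bounds simultaneously from the invariance argument, as you essentially do. All the algebraic reductions you sketch (Vieta, $\Lambda_1+\Lambda_2<1$, the squaring step for $\ell_+>1-\sqrt{\Lambda_2}$) check out and ultimately rest, as you note, on the single disjointness hypothesis $\sqrt{\Lambda_1}+\sqrt{\Lambda_2}<1$.
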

\begin{proof}
The convergence of the two sequences $(\lambda_{j,n})_{n\geqslant 0}$ ($j=1,2$) depends on the sign of $\Lambda$. Noticing that $\Lambda_2/\Lambda_1=(\rho_2/\rho_1)^2$ and rewriting $\Lambda$ as:
$$\Lambda=\Lambda_2^2\left(\frac{\rho_1^2}{\rho_2^2}-1\right)^2-2\Lambda_2\left(\frac{\rho_1^2}{\rho_2^2}+1\right)+1,$$
we deduce that:
$$\Lambda=\left(1-\frac{(\rho_1-\rho_2)^2}{\rho^2}\right)\left(1-\frac{(\rho_1+\rho_2)^2}{\rho^2}\right),$$
and therefore that $\Lambda>0$. \\
Since $(1-\Lambda_1+\Lambda_2)>0$  and $\Lambda=( 1-\Lambda_1+\Lambda_2)^2-4\Lambda_2$, we have:
$$\sqrt{\Lambda}-(1-\Lambda_1+\Lambda_2)<0.$$
It follows that:
$$(1-\Lambda_1+\Lambda_2)^2-4\Lambda_2+\Lambda-2(1-\Lambda_1+\Lambda_2)\sqrt{\Lambda}=2\sqrt{\Lambda}(\sqrt{\Lambda}-(1-\Lambda_1+\Lambda_2))<0.$$
We deduce first that:
$$4\Lambda_2>(1-\Lambda_1+\Lambda_2)^2+\Lambda-2(1-\Lambda_1+\Lambda_2)\sqrt{\Lambda}=(1-\Lambda_1+\Lambda_2-\sqrt{\Lambda})^2,$$
and hence that 
$$1-\sqrt{\Lambda_2}<\frac{1}{2}\left[1+\Lambda_1-\Lambda_2+\sqrt{\Lambda}\right].$$
\end{proof}
Let us define the two complex sequences:
\begin{equation}
\label{eq:zjn}
z_{1,n}=(1-\lambda_{2,n})z_2+\lambda_{2,n} z_1,\qquad\qquad z_{2,n}=(1-\lambda_{1,n})z_1+\lambda_{1,n} z_2.
\end{equation}
We also introduce the complex sequences $(\kappa_{1,n})_{n\geqslant 1}$ defined by:
\begin{equation}\label{eq:kappajn}
\kappa_{1,1}=2\rho_2^2 \qquad  \kappa_{1,2}=-\frac{2(\rho_1\rho_2)^2}{(\overline{z_2-z_1})^2},
\qquad  
\kappa_{1,{n+2}}=\frac{\Lambda_1\Lambda_2}{(\lambda_{2,n}\lambda_{1,n-1})^2}\kappa_{1,n},  \qquad n\geqslant 1.
\end{equation}
Note that we  have the following relations for all $n\geqslant 1$:
\begin{equation}
\label{nice_one}
\kappa_{1,{n+2}}=\frac{(1-\lambda_{1,n+1})(1-\lambda_{2,n})}{\lambda_{1,n-1}\lambda_{2,n}}\kappa_{1,n}=\frac{\Lambda_1\Lambda_2}{\left(\lambda_{1,n-1}-\Lambda_1\right)^2}\kappa_{1,n}.
\end{equation}
We define similarly a sequence $(\kappa_{2,n})_n$ by exchanging the role of the boundary indices. The sequences $(\kappa_{1,n})_n$ and $(\kappa_{2,n})_n$ can be deduced from each other according to the following identities:
\begin{align}
\kappa_{1,n+1}&=-\frac{\rho_2^2}{(\overline{z_2-z_1})^2\lambda_{2,n-1}^2}\overline{\kappa}_{2,n}=-\frac{(z_2-z_1)^2}{|z_2-z_1|^2}\frac{(1-\lambda_{1,n})}{\lambda_{2,n-1}}\overline{\kappa}_{2,n}\\
\kappa_{2,n+1}&=-\frac{\rho_1^2}{(\overline{z_1-z_2})^2\lambda_{1,n-1}^2}\overline{\kappa}_{1,n}=-\frac{(z_1-z_2)^2}{|z_2-z_1|^2}\frac{(1-\lambda_{2,n})}{\lambda_{1,n-1}}\overline{\kappa}_{1,n}.\label{eq:kappa2np1}
\end{align}
\begin{lemma}
\label{estim_Q}
There exists a constant $C>0$ and a constant $0<\delta<1$ such that:
$$|\kappa_{1,{n}}|+|\kappa_{2,{n}}|\leqslant C\delta^n\qquad\qquad\forall n\geqslant 1.$$
\end{lemma}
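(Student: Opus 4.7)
The strategy is to read off exponential decay directly from the recurrence \eqref{nice_one},
$$\kappa_{1,n+2} = \frac{\Lambda_1\Lambda_2}{(\lambda_{2,n}\lambda_{1,n-1})^2}\,\kappa_{1,n},$$
by controlling the multiplicative factor uniformly in $n$. Lemma \ref{lem:conv} supplies the uniform lower bounds $\lambda_{1,n} \geq \lambda_1^*$ and $\lambda_{2,n} \geq \lambda_2^*$, where $\lambda_j^* := \tfrac12\bigl[1 + \Lambda_j - \Lambda_{3-j} + \sqrt{\Lambda}\bigr] > 0$. Passing to the limit in the defining recursion $\lambda_{1,n}= 1-\Lambda_2/\lambda_{2,n-1}$ yields the useful identities $\lambda_1^* \lambda_2^* = \lambda_1^* - \Lambda_1 = \lambda_2^* - \Lambda_2 = \tfrac12\bigl[1-(\Lambda_1+\Lambda_2)+\sqrt{\Lambda}\bigr]$.

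The crux of the argument is then the estimate
$$\delta^2 := \frac{\Lambda_1\Lambda_2}{(\lambda_1^*\lambda_2^*)^2} < 1,$$
which, after clearing denominators, reduces to $\bigl(\sqrt{\Lambda_1}+\sqrt{\Lambda_2}\bigr)^2 - 1 < \sqrt{\Lambda}$. Since the disks $\omega_1$ and $\omega_2$ are non-intersecting, one has $\rho_1+\rho_2<\rho$, equivalently $\sqrt{\Lambda_1}+\sqrt{\Lambda_2}<1$, so the left-hand side is strictly negative while the right-hand side is strictly positive (recall that $\Lambda>0$, as computed in the proof of Lemma \ref{lem:conv}). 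Combining this with the previous lower bounds on $\lambda_{j,n}$, we obtain $\Lambda_1\Lambda_2/(\lambda_{2,n}\lambda_{1,n-1})^2 \leq \delta^2$ for every $n\geq 1$.

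Iterating \eqref{nice_one} separately along the even and odd indices then gives $|\kappa_{1,n}| \leq C_1\,\delta^n$ for a constant $C_1$ depending only on $|\kappa_{1,1}|$ and $|\kappa_{1,2}|$. The corresponding bound $|\kappa_{2,n}| \leq C_2\,\delta^n$ follows either by symmetry of the construction (swapping the indices $1\leftrightarrow 2$) or directly from the linking identity \eqref{eq:kappa2np1}, using again the uniform lower bounds on $\lambda_{j,n}$. Summing yields the claimed estimate with the same $\delta$.

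The only delicate step is the inequality $\delta^2 < 1$; this is precisely where the geometric separation hypothesis $\rho_1+\rho_2<\rho$ enters in an essential way. Everything else is the routine iteration of a contracting scalar recurrence and a symmetry argument between the two disks.
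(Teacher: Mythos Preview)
Your proposal is correct and follows essentially the same route as the paper: bound the multiplicative factor in the recurrence \eqref{nice_one} uniformly via the lower bounds on $\lambda_{j,n}$ from Lemma~\ref{lem:conv}, then show the resulting ratio is strictly less than $1$. Since $\lambda_1^*\lambda_2^*=\tfrac12[1-\Lambda_1-\Lambda_2+\sqrt{\Lambda}]$, your $\delta^2=\Lambda_1\Lambda_2/(\lambda_1^*\lambda_2^*)^2$ is exactly the paper's quantity $4\Lambda_1\Lambda_2/(1-\Lambda_1-\Lambda_2+\sqrt{\Lambda})^2$; the paper verifies it is $<1$ by the algebraic identity $(1-\Lambda_1-\Lambda_2+\sqrt{\Lambda})^2-4\Lambda_1\Lambda_2=2\sqrt{\Lambda}\,[\sqrt{\Lambda}+(1-\Lambda_1-\Lambda_2)]>0$, whereas you rewrite the same inequality as $(\sqrt{\Lambda_1}+\sqrt{\Lambda_2})^2-1<\sqrt{\Lambda}$ and invoke the separation $\sqrt{\Lambda_1}+\sqrt{\Lambda_2}<1$ explicitly---a cosmetic difference, and arguably your version makes the role of the non-intersection hypothesis more transparent.
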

\begin{proof}
According to Lemma~\ref{lem:conv} and to relation \eqref{nice_one}, it suffices to prove that:
$$0<\frac{4\Lambda_1\Lambda_2}{\left(1-\Lambda_1-\Lambda_2+\sqrt{\Lambda}\right)^2}<1.$$
Direct computations lead to:
$$(1-\Lambda_1-\Lambda_2+\sqrt{\Lambda})^2-4\Lambda_1\Lambda_2=2\sqrt{\Lambda}\left[\sqrt{\Lambda}+(1-\Lambda_1-\Lambda_2)\right],$$
and the quantity in the right hand side is clearly positive. The conclusion follows.
\end{proof}
We are now in position to prove the claims detailed in Proposition \ref{prop:zkappaq}. 
\begin{proof}[Proof of Proposition \ref{prop:zkappaq}] \ \\
1. According to the estimate of Lemma~\ref{lem:conv}, we have:
$$|z_{2,n}-z_2|=\rho (1-\lambda_{1,n})<\rho\sqrt{\Lambda_2}=\rho_2,$$
and therefore, the point $z_{2,n}\in\omega_2$ for every $n\geqslant 0$. Of course, similar arguments show that $z_{1,n}\in\omega_1$.\ \\
2. Let the densities $q_{1,n}$ be defined as in Proposition \ref{prop:zkappaq}, with
\begin{equation}\label{eq:alphajn}
\alpha_{1,n}=- \frac12\frac{\kappa_{1,n}}{\lambda_{1,n-1}(z_1-z_2)}.
\end{equation}
According to lemmas \ref{lem:conv} and \ref{estim_Q}, the series of functions:
$$
\sum_{n\geqslant 0}\left\{\frac{\kappa_{1,2n}}{z-z_{2,2n-1}}+\alpha_{1,2n}\right\}
+\left\{\frac{\overline{\kappa}_{1,2n+1}}{\overline{z-z_{2,2n}}}+ \overline{\alpha}_{1,2n+1}\right\},
$$
converges in $H^1(\omega_1)$ and hence its trace on $\gamma_1$ converges in $H^{\frac12}(\gamma_1)$ to some limit $q_1$. The same reasoning holds for $j=2$. \ \\
3. According to Remark \ref{rem:Cauchyf}, the densities $q_{j,0}$ and $\bar q_{j,0}$ ($j=1,2$) admit the following Cauchy transforms:
$$\mathscr C_{\gamma_j} q_{j,0}(\xi)=
\begin{cases} 
2(\xi-z_j)&\text{in }$\omega_j$\\
0&\text{in }$\omega^c_j$
\end{cases}
\qquad\text{and}\qquad
\mathscr C_{\gamma_j} \bar q_{j,0}(\xi)=
\begin{cases} 
0&\text{in }$\omega_j$\\
-\displaystyle\frac{2\rho_j^2}{\xi-z_j}&\text{in }$\omega^c_j,$
\end{cases}
$$
where, for the last equation, we have used the relation:
$$\overline{\xi-z_j}=\frac{\rho_j^2}{\xi-z_j}\qquad\text{on }\gamma_j.$$
Based on formulae \eqref{magic}, we deduce that:
$$\mathscr D_{\gamma_j} q_{j,0}(\xi)=
\begin{cases} 
\xi-z_j&\text{in }$\omega_j$\\
-\displaystyle\frac{\rho_j^2}{\overline{\xi-z_j}}&\text{in }$\omega^c_j.$
\end{cases}
$$
In particular, we have
$$
\mathscr D_{\gamma_1} q_{1,0}(\xi)+\mathscr D_{\gamma_2} q_{2,0}(\xi)=
\begin{cases}
\displaystyle(\xi-z_1)-\frac{\rho_2^2}{\overline{\xi-z_2}}&\text{in }$\omega_1$\\
\displaystyle(\xi-z_2)-\frac{\rho_1^2}{\overline{\xi-z_1}}&\text{in }$\omega_2.$
\end{cases}
$$
Similarly, and using once again Remark \ref{rem:Cauchyf}, we have  for every $n\geqslant 1$:
\begin{equation}\label{eq:Cq1n}
\mathscr C_{\gamma_1} q_{1,n}(\xi) =  
\begin{cases}
\displaystyle\frac{\kappa_{1,n}}{\xi-z_{2,n-1}}+\alpha_{1,n}&\text{in }$\omega_1$\\
0&\text{in }$\omega_1^c.$
\end{cases}
\end{equation}
In order to compute $\mathscr C_{\gamma_1} \bar q_{1,n}(\xi)$, we note that for every $\xi\in \gamma_1$, we have:
\begin{align}
\frac{1}{\overline{\xi-z_{2,n-1}}}&=\frac{1}{\frac{\rho_1^2}{\xi-z_1}+\overline{z_1-z_{2,n-1}}}\\
&=\frac{1}{\overline{z_1-z_{2,n-1}}}\left[1-\left(\frac{\rho_1^2}{\overline{z_1-z_{2,n-1}}}\right)\frac{1}{\xi-\left(z_1-\frac{\rho_1^2}{\overline{z_1-z_{2,n-1}}}\right)}\right].\label{eq:fracinv}
\end{align}
But notice now that, on the one hand:
$$\overline{z_1-z_{2,n-1}}=\lambda_{1,n-1}(\overline{z_1-z_2}),$$
and on the other hand:
\begin{align*}
z_1-\frac{\rho_1^2}{\overline{z_1-z_{2,n-1}}}&=z_1-\frac{\rho_1^2}{\lambda_{1,n-1}(\overline{z_1-z_2})}\\
&=z_1-\frac{\rho_1^2}{\rho^2\lambda_{1,n-1}}(z_1-z_2)\\
&=\left(1-\frac{\Lambda_{1}}{\lambda_{1,n-1}}\right)z_1+\frac{\Lambda_{1}}{\lambda_{1,n-1}}z_2\\
&=\lambda_{2,n}z_1+(1-\lambda_{2,n})z_2\\
&=z_{1,n}.
\end{align*}
Plugging the last two relations in \eqref{eq:fracinv} and using the identity  
$$\displaystyle\frac{\rho_1^2}{(\lambda_{1,n-1})^2(\overline{z_1-z_2})^2}\overline{\kappa}_{1,n}=-\kappa_{2,n+1},$$
we obtain that for $\xi\in \gamma_1$:
$$
\frac{\overline{\kappa}_{1,n}}{\overline{\xi-z_{2,n-1}}}=-2\overline{\alpha}_{1,n}
+\frac{\kappa_{2,n+1}}{\xi-z_{1,n}}.
$$
Hence
$$
\bar q_{1,n}(\xi) = \frac{\kappa_{2,n+1}}{\xi-z_{1,n}}-\overline{\alpha}_{1,n}.
$$
We deduce first that:
$$\mathscr C_{\gamma_1} \overline{q}_{1,n}(\xi) =  
\begin{cases}
\displaystyle-\overline{\alpha}_{1,n} &\text{in }$\omega_1$\\[8pt]
-\displaystyle\frac{\kappa_{2,n+1}}{\xi-z_{1,n}}&\text{in }$\omega_1^c,$
\end{cases}
$$
and next, using \eqref{eq:Cq1n}, that:
$$
\mathscr D_{\gamma_1} {q}_{1,n}(\xi) =  
\frac12 \mathscr C_{\gamma_1} q_{1,n}(\xi)
+\frac12\overline{\mathscr C_{\gamma_1} \overline{q}_{1,n}(\xi)}=
\begin{cases}
\displaystyle \frac{1}{2}\frac{\kappa_{1,n}}{\xi-z_{2,n-1}}&\text{in }$\omega_1$\\[8pt]
\displaystyle -\frac{1}{2}\frac{\overline{\kappa}_{2,n+1}}{\overline{\xi-z_{1,n}}}&\text{in }$\omega_1^c.$
\end{cases}
$$
After similar computations for $\mathscr D_{\gamma_1} {q}_{2,n}$, we finally obtain that
$$\mathscr D_{\gamma_1} {q}_{1,n}(\xi)+\mathscr D_{\gamma_2} {q}_{2,n}(\xi)
=
\begin{cases}
\displaystyle\frac{1}{2}\,\frac{\kappa_{1,n}}{\xi-z_{2,n-1}}-\frac{1}{2}\frac{\overline{\kappa}_{1,n+1}}{\overline{\xi-z_{2,n}}}&\text{in }$\omega_1$\\[8pt]
\displaystyle\frac{1}{2}\,\frac{\kappa_{2,n}}{\xi-z_{1,n-1}}-\frac{1}{2}\frac{\overline{\kappa}_{2,n+1}}{\overline{\xi-z_{1,n}}}&\text{in }$\omega_2.$
\end{cases}
$$
It is then easy to verify that:
$$
\sum_{k=0}^{n}\sum_{j=1,2}\mathscr D_{\gamma_j} {q}_{j,2k}(\xi)+\mathscr D_{\gamma_j} \overline{q}_{j,2k+1}(\xi)
=
\begin{cases}
\displaystyle (\xi-z_1)-\frac{1}{2}\frac{\kappa_{1,2n+2}}{{\xi-z_{2,2n+1}}}&\text{in }$\omega_1$\\[8pt]
\displaystyle (\xi-z_2)-\frac{1}{2}\frac{\kappa_{2,2n+2}}{{\xi-z_{1,2n+1}}}&\text{in }$\omega_2.$
\end{cases}
$$
Letting $n$ go to infinity and invoking again Lemma~\ref{estim_Q}, we obtain the claimed result.
\end{proof}
We can now prove the main results of this section.
\begin{theorem}\label{thm:quadrature2disks}
There exists two sequences of complex weights $(c_{1,n})_{n\geqslant 0}$ and $(c_{2,n})_{n\geqslant 0}$ such that for every $f=(f_1,f_2)\in H^\frac12(\gamma_1\cup\gamma_2)$ with $f_j$ the trace of a holomorphic function $F_j$ in $\omega_j$, there holds
$$\langle f, z\rangle_{\frac{1}{2},\gamma}=\sum_{j=1,2}\sum_{n\geqslant 0}c_{j,n}F'_j(z_{j,n}).$$
In other words, we have
$$
\langle f, z\rangle_{\frac{1}{2},\gamma}=\int_{\omega} F'{\rm d}\nu,
$$
where the measure $\nu$ is defined by:
$$\nu=\sum_{j=1,2}\sum_{n\geqslant 0}c_{j,n}\delta_{z_{j,n}}.$$
\end{theorem}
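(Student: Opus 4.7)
The plan is to combine Lemma \ref{lem:dc2} with the double-layer representation furnished by Proposition \ref{prop:zkappaq} and then reduce the resulting boundary pairing to a convergent sum of Cauchy residues. Apply Lemma \ref{lem:dc2} with $p=f$ and $q=Q_\gamma^1$. Since $f_j$ is by hypothesis the trace of a function $F_j$ holomorphic in $\omega_j$, the single-layer potential $u:=\mathscr S_\gamma\hat f$ coincides with $F_j$ throughout $\omega_j$, so its harmonic conjugate is $\widetilde u=-iF_j$ there, and on $\gamma_j$ one has $\partial_n\widetilde u=-i\,n\,F_j'(\xi)$, where $n=n_1+in_2$ is shorthand for the complex outward normal. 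Similarly, the single-layer potential $v$ associated with $Q_\gamma^1$ coincides with $\xi$ in $\omega_j$ (modulo the projection constants), so its harmonic conjugate equals $-i\xi$ there. Combining this observation with Proposition \ref{prop:zkappaq} and the uniqueness of the double-layer density for harmonic functions decaying at infinity, we may identify the density $\check q$ provided by Lemma \ref{lem:dc2} as $\check q=-i(q_1,q_2)$.

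Lemma \ref{lem:dc2} then yields
\[
\langle f,Q_\gamma^1\rangle_{\frac12,\gamma}=-\langle\partial_n\widetilde u,\check q\rangle_{-\frac12,\frac12,\gamma}.
\]
Inserting the expansion $q_j=\sum_{n\geq 0}(q_{j,2n}+\overline{q_{j,2n+1}})$ and exchanging sum and pairing---legitimate because of the $H^{1/2}(\gamma_j)$-convergence established in Proposition \ref{prop:zkappaq} together with $\partial_n\widetilde u\in H^{-1/2}(\gamma)$---we are reduced to evaluating, for each summand $\phi$, the integral $\int_{\gamma_j}n\,F_j'(\xi)\,\phi(\xi)\,d\sigma$. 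Using the identity $n\,d\sigma=-i\,d\xi$ valid on counterclockwise-oriented $\gamma_j$, each such integral becomes a contour integral to which Cauchy's theorem applies. For the holomorphic summands $q_{j,2n}(\xi)=\kappa_{j,2n}/(\xi-z_{\bar j,2n-1})+\alpha_{j,2n}$ (with $\bar j$ denoting the index complementary to $j$), the pole $z_{\bar j,2n-1}$ lies in $\omega_{\bar j}\subset\omega_j^c$ and Cauchy's theorem kills the contribution. For the anti-holomorphic summands $\overline{q_{j,2n+1}(\xi)}$, one exploits the relation $\bar\xi=\rho_j^2/(\xi-z_j)+\bar z_j$ valid on $\gamma_j$ to rewrite each such term as a rational function of $\xi$ with a single pole at the inverse point $z_j-\rho_j^2/\overline{(z_j-z_{\bar j,2n})}$, which by construction coincides with one of the iterated inversion points $z_{j,\ell}\in\omega_j$; Cauchy's formula then returns a term proportional to $F_j'(z_{j,\ell})$.

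Collecting the resulting contributions produces the atomic representation $\langle f,z\rangle_{\frac12,\gamma}=\sum_{j=1,2}\sum_{n\geq 0}c_{j,n}F_j'(z_{j,n})$, with weights $c_{j,n}$ expressible in closed form from the sequences $(\kappa_{j,m})_{m\geq 1}$ and the geometric data $\rho_j,z_j$. The main technical hurdles will be the bookkeeping between the two families of indices while carefully tracking the complex-conjugation conventions of Lemma \ref{lem:dc2}, and verifying that the constant summands $\alpha_{j,n}$ contribute nothing---which follows from the vanishing of $\int_{\gamma_j}n\,F_j'\,d\sigma$, itself an immediate consequence of Cauchy's theorem applied to the holomorphic function $F_j'$ inside $\omega_j$.
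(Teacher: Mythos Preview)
Your overall strategy is exactly the one the paper follows: apply Lemma~\ref{lem:dc2} with $p=f$ and $q=Q_\gamma^1$, identify the double-layer density $\check q$ as $-iq$ with $q=(q_1,q_2)$ supplied by Proposition~\ref{prop:zkappaq}, expand $q_j$ term by term, and evaluate the resulting contour integrals by Cauchy's theorem after using the inversion $\overline{\xi-z_j}=\rho_j^2/(\xi-z_j)$ on $\gamma_j$.

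There is, however, a concrete error in the passage from $-\langle\partial_n\widetilde u,\check q\rangle_{-\frac12,\frac12,\gamma}$ to your contour integrals. The duality pairing in Lemma~\ref{lem:dc2} is the hermitian extension (see \eqref{eq:psc} and the proof of Lemma~\ref{lem:dc2}), so with $\partial_n\widetilde u=-i\,n\,F_j'$ and $\check q=-iq$ the two factors of $-i$ combine with a complex conjugate, and one obtains
\[
\langle f,z\rangle_{\frac12,\gamma}
=-\sum_{j=1,2}\int_{\gamma_j}\overline{q_j(z)}\,\partial_nF_j(z)\,ds_z
=-i\sum_{j=1,2}\int_{\gamma_j}\overline{q_j(z)}\,F_j'(z)\,dz,
\]
i.e.\ the integrand carries $\overline{q_j}$, not $q_j$. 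This flips the parity of the surviving terms in your analysis: writing $\overline{q_j}=\sum_{n\geqslant 0}(\overline{q_{j,2n}}+q_{j,2n+1})$, it is the odd-indexed pieces $q_{j,2n+1}$ (traces of functions holomorphic in $\omega_j$) that vanish by Cauchy's theorem, while the even-indexed pieces $\overline{q_{j,2n}}$, rewritten via the circle inversion, have their pole at the point $z_1-\rho_1^2/\overline{z_1-z_{2,2n-1}}=z_{1,2n}$ (and symmetrically for $j=2$), producing residues $2\pi\kappa_{2,2n+1}F_1'(z_{1,2n})$. Your version would instead place the atoms at the odd-indexed points $z_{j,2n+1}$, and since the missing conjugate changes the value of the pairing itself, the resulting sum would not equal $\langle f,z\rangle_{\frac12,\gamma}$. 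Once this conjugation is restored, your argument coincides with the paper's and the remaining bookkeeping (including the vanishing of the constants $\alpha_{j,n}$, which you correctly anticipate) goes through.
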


\begin{proof}
Let us first prove the following identity:
\begin{equation}\label{eq:zf1}
\langle f, z\rangle_{\frac{1}{2},\gamma}=\sum_{j=1,2}-i\int_{\gamma_j}\overline {q_j(z)} F'_j(z)\,{\rm d}z.
\end{equation}
We use here the notation of Lemma \ref{lem:dc2}. If $F_{j}=F^1_{j}+i F^2_{j}$, let $\widetilde F_{j}=\widetilde F^1_{j}+i \widetilde F^2_{j}$ where  $\widetilde F^\ell_{j}$ ($\ell=1,2$) is the harmonic conjugate of $F^\ell_{j}$. Obviously, since $F_{j}$ is harmonic in $\omega_{j}$, we have $\widetilde F_j^1=F_j^2$ and $\widetilde F_j^2=-F_j^1$ in $\omega_{j}$, and hence $\widetilde F_j = -iF_j$ in $\omega_{j}$. Similarly, we have $\widetilde {z}-z_j = -i(z-z_j)=-i\mathscr D_{\gamma}q=\mathscr D_{\gamma}(-iq)$ in $\omega_{j}$. Therefore, we have $\check q = -iq$. Formula \eqref{eq:pqcheckc} shows that
$$
\langle f, z\ranglepp{\gamma}=-\langle \partial_{n}\widetilde F,\check q\ranglemp{\gamma} =-\langle \partial_{n} F, q\ranglemp{\gamma}=-\sum_{j=1,2}\int_{\gamma_j}\overline {q_j(z)}\partial_{n} F_j(z)\,{\rm d}s_z.
$$
Since $\partial_{n} F_j(z)=\partial_{n} F^1_j(z)+i\partial_{n} F^2_j(z)= F'_{j}(z)n(z)$ and ${\rm d}z=in(z){\rm d}s_z$, the above formula immediately yields \eqref{eq:zf1}.\\
As $q_{j}=\sum_{n\geqslant 0} q_{j,2n} + \overline{q}_{j,2n+1}$, equation \eqref{eq:zf1} implies that
$$
\langle f,z\rangle_{\frac{1}{2},\gamma}=\sum_{j=1,2}\sum_{n\geqslant 0}-i\int_{\gamma_j}\overline {q_{j,2n}(z)} F'_j(z){\rm d}z-i\int_{\gamma_j}{q_{j,2n+1}(z)} F'_j(z){\rm d}z.
$$
The density $q_{j,2n+1}$ being the trace of a holomorphic function in $\omega_j$, the last terms in the right hand side vanishes. Let us focus on the term:
$$-i\int_{\gamma_1}\overline {q_{1,2n}(z)} F'_1(z){\rm d}z=-i\int_{\gamma_1} \overline{\kappa}_{1,2n}\frac{1}{\overline{z-z_{2,2n-1}}} F'_1(z){\rm d}z.$$
But for $z\in\gamma_1$, we have:
$$\frac{1}{\overline{z-z_{2,2n-1}}}=\frac{1}{\lambda_{1,n-1}(\overline{z_1-z_{2}})}\left[1-\frac{\rho_1^2}{\lambda_{1,2n-1}(\overline{z_1-z_{2}})}\frac{1}{z-z_{1,2n}}\right],$$
and therefore:
\begin{align*}
-i\int_{\gamma_1}\overline {q_{1,2n}(z)} F'_1(z){\rm d}z&=\frac{i\rho_1^2 \overline{\kappa}_{1,2n}}{\lambda^2_{1,2n-1}(\overline{z_1-z_{2}})^2}\int_{\gamma_1}\frac{1}{z-z_{1,2n}}F'_1(z){\rm d}z\\
&=-\frac{2\pi \rho_1^2 \overline{\kappa}_{1,2n}}{\lambda^2_{1,2n-1}(\overline{z_1-z_{2}})^2}F'_1(z_{1,2n})\\
&=2\pi \kappa_{2,2n+1}F'_1(z_{1,2n}),
\end{align*}
where the last equality follows from \eqref{eq:kappa2np1}. Adding the contribution corresponding to $\gamma_{2}$, we finally obtain that
$$
\langle f, z\ranglepp{\gamma}=2\pi  \sum_{j=1,2}\sum_{n\geqslant 0}\left\{\kappa_{2,2n+1}F'_1(z_{1,2n})+\kappa_{1,2n+1}F'_2(z_{2,2n})\right\},
$$
and the proof is complete.
\end{proof}

\section{Numerical tests}
\label{sect:num}
We collect in this section some numerical experiments illustrating the feasibility of the proposed reconstruction method. For the sake of clarity, we first sum up the different steps of the simple reconstruction algorithm:
\begin{enumerate}
\item Compute a numerical approximation of the operator $\mathsf R=\mathsf S_\Gamma(\Lambda_\gamma-\Lambda_0)$.
\item Fix an integer $n\geqslant 1$ and compute for $0\leqslant m\leqslant 2n-1$: 
$$
\tau_{m}:=\int_{\omega} z^{m}\, {\rm d} \nu =  \frac{1}{m+1} \langle Q^{m+1}_{\gamma}, Q^{1}_{\gamma} \rangle_{\frac 12, \gamma}=\frac{1}{m+1} \langle Q^{m+1}_{\Gamma}, ({\rm Id}+\mathsf R)^{-1}\mathsf R Q^1_\Gamma \rangle_{\frac 12, \Gamma}.
$$
\item Following the method described in \S.~\ref{sect:algo}, solve Prony's system:
$$
\sum_{i=1}^n c_i z^m_i = \tau_{m}, \qquad \forall\,m=0,\cdots,2n-1.
$$  
to determine  the positive weights $(c_{i})_{1\leqslant i\leqslant n}$ and (distinct) points $(z_{i})_{1\leqslant i\leqslant n}$. 
\item Plot the disks of centers $(z_{i})_{1\leqslant i\leqslant n}$ and radii $(\rho_{i})_{1\leqslant i\leqslant n}$, with $\rho_{i}=\sqrt{|c_{i}|/2\pi}$.
\end{enumerate}
We refer the interested reader to \cite[Section 4]{MunRam17} for technical details about the implementation of steps 1 and 2 of the algorithm. 
\par
For practical reconstructions, a natural question is how to determine the number $n$ of atoms (disks) to be used. From our numerical experiments, there is no clear answer to this issue. However, increasing $n$ generally yields reconstructions of better quality. This fact is illustrated in Figures \ref{fig:rectangle}, \ref{fig:trefle} and \ref{fig:multi} which show respectively examples of reconstructions  for a rectangular cavity, a clover shaped cavity and a multiply connected cavity (with three connected components). 

The following remarks are worth being mentioned:
\begin{itemize}
\item
Increasing the number of atoms from $n$ to $n+1$ does not result in just adding an additional disk. Indeed, this leads to a new Prony's system and hence, to a completely new distribution of disks. 
\item
For a given value of $n$, we can obtain atoms with zero radii which seem to be randomly distributed outside the cavity. See for instance Figure  \ref{fig:rectangle:22}, where  among the 22 atoms, only 13 have non zero radii.  
\item
Sometimes, spurious atoms can be observed (see Figure \ref{fig:pb:16}), but they disappear when $n$ increases (see Figure \ref{fig:pb:17}).
\end{itemize}

\begin{figure}[h]
\centering
	\subfigure[$n=1$]{\includegraphics[width=0.4\textwidth]{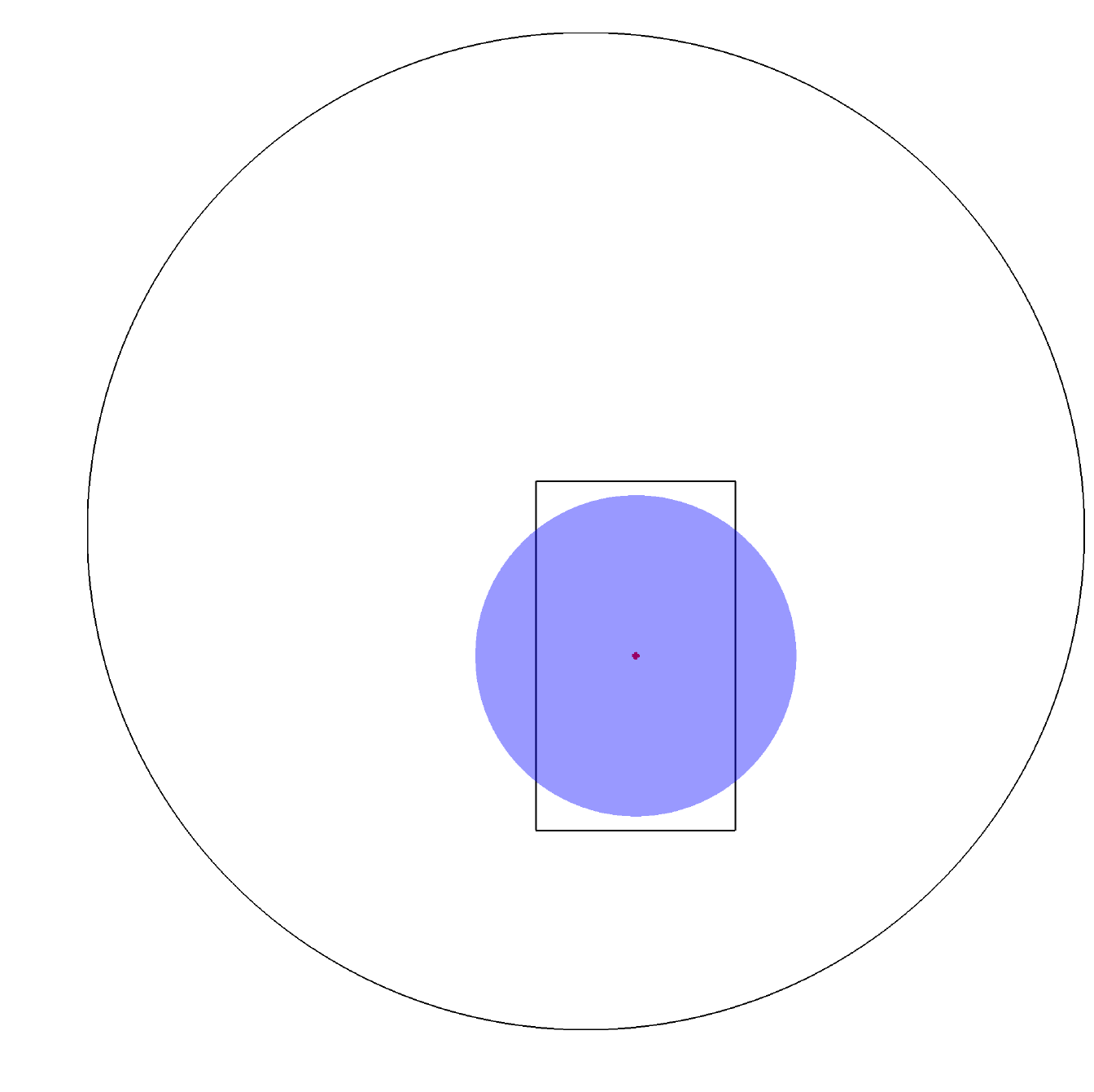}}\hspace{1cm}
	\subfigure[$n=5$]{\includegraphics[width=0.4\textwidth]{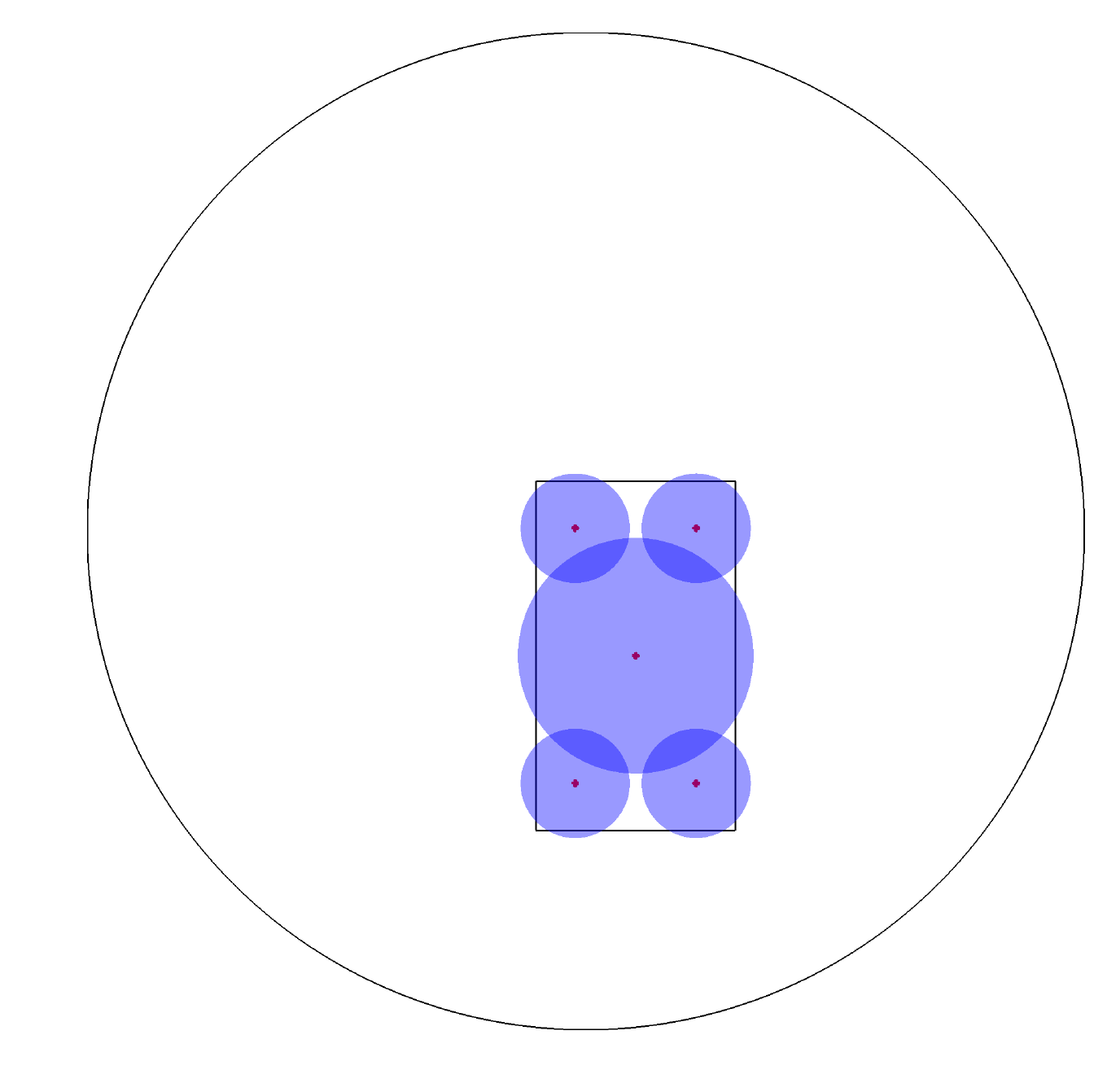}}\\
 	\subfigure[$n=10$]{\includegraphics[width=0.4\textwidth]{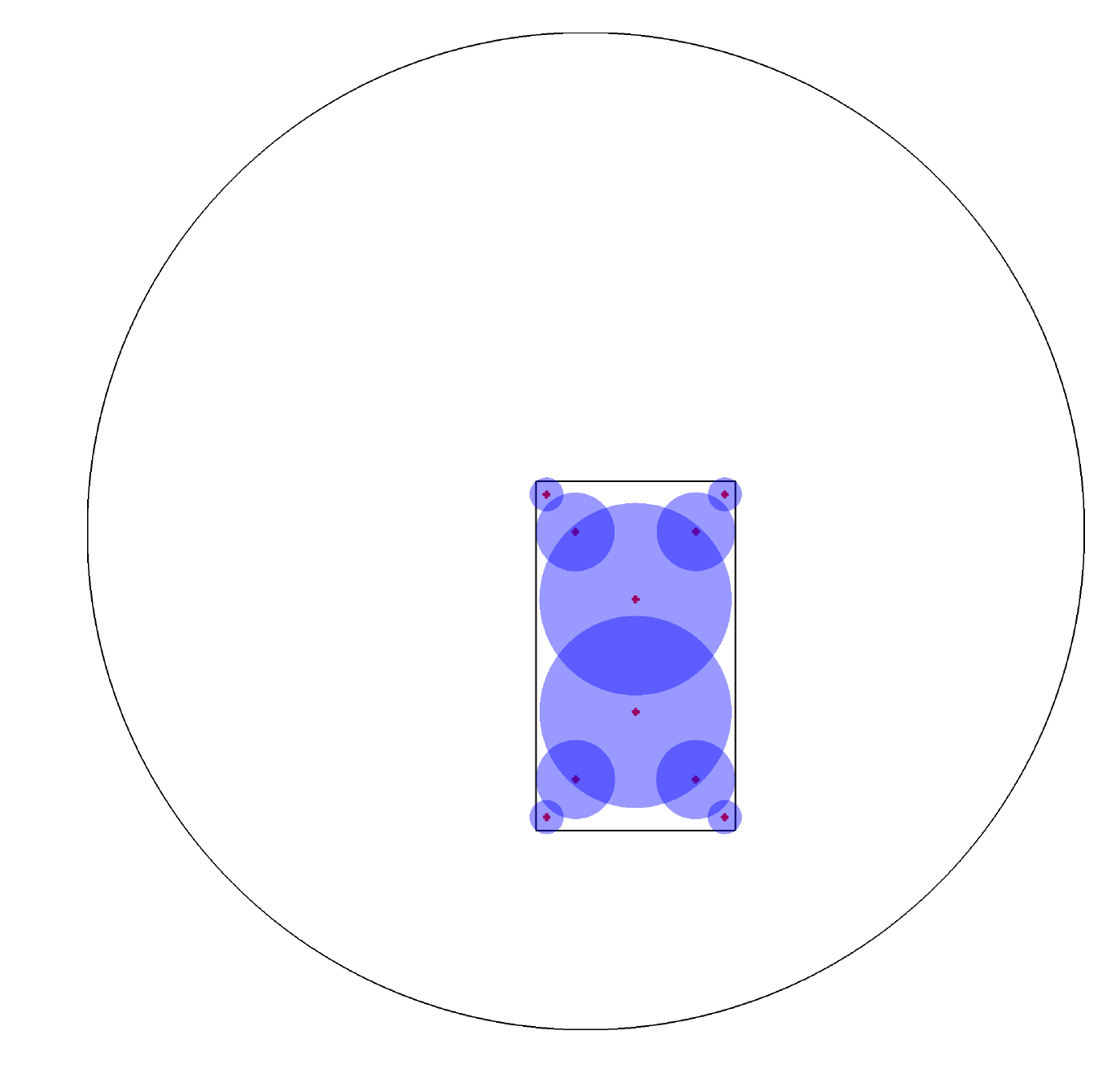}}\hspace{1cm}
	\subfigure[$n=22$\label{fig:rectangle:22}]{\includegraphics[width=0.4\textwidth]{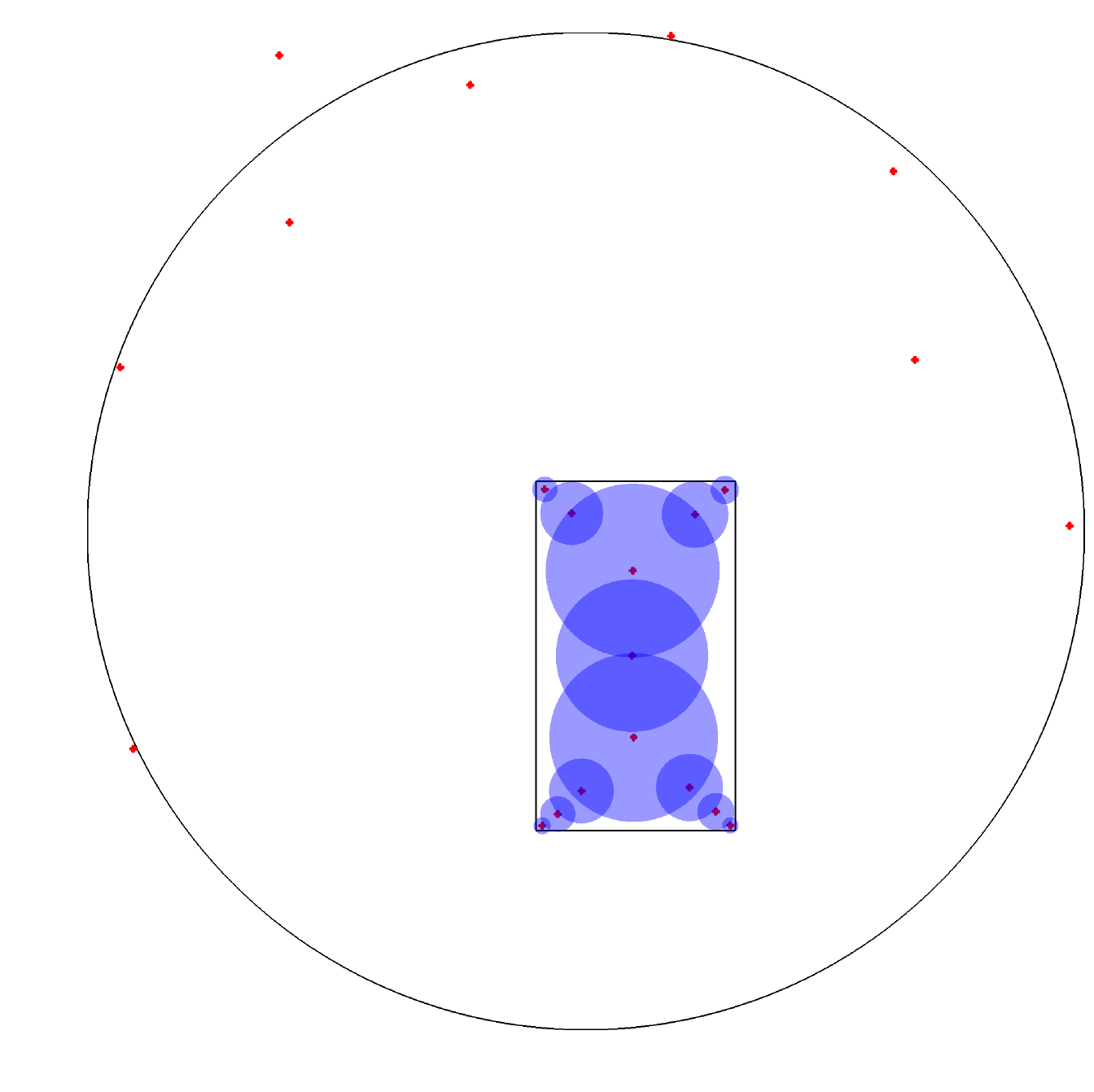}}
\caption{Example of reconstruction of a rectangle cavity for different values of $n$, the number of atoms (the red dots represent the centers of the disks). Note that some disks are degenerate (i.e. have zero radii).  \label{fig:rectangle} }
\end{figure}

\begin{figure}[h]
\centering
	\subfigure[$n=1$]{\includegraphics[width=0.4\textwidth]{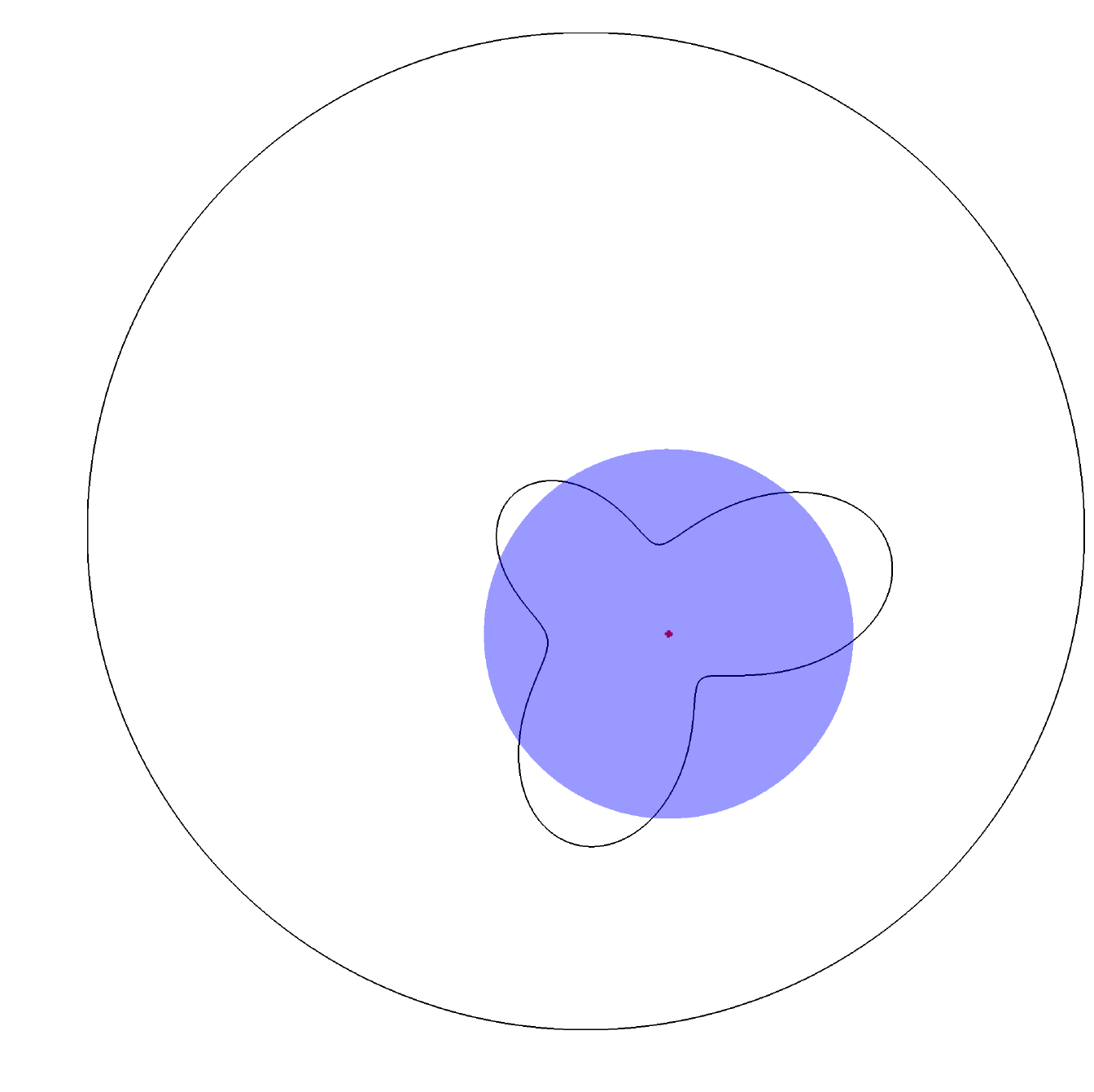}}\hspace{1cm}
	\subfigure[$n=3$]{\includegraphics[width=0.4\textwidth]{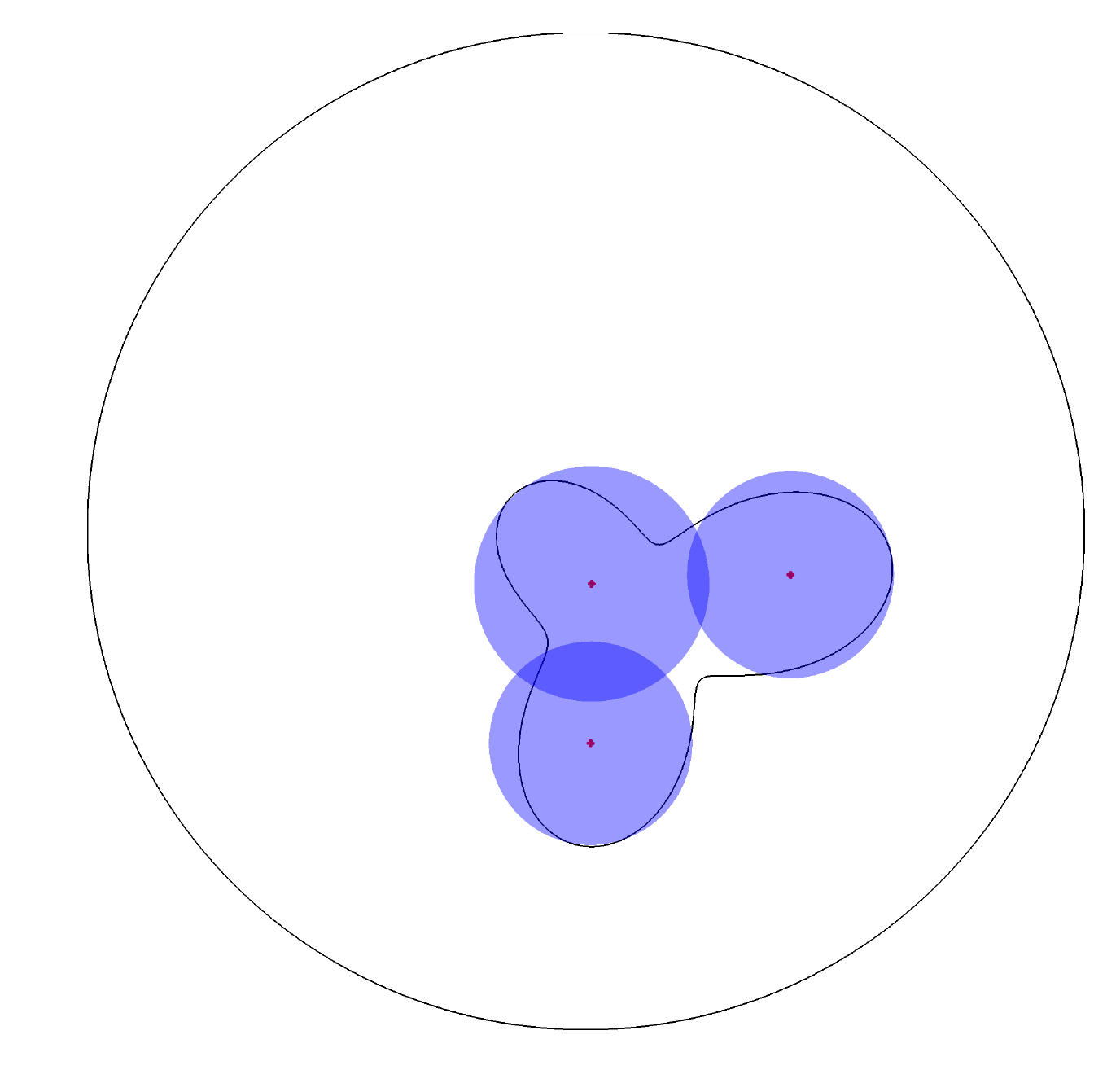}}\\
 	\subfigure[$n=8$]{\includegraphics[width=0.4\textwidth]{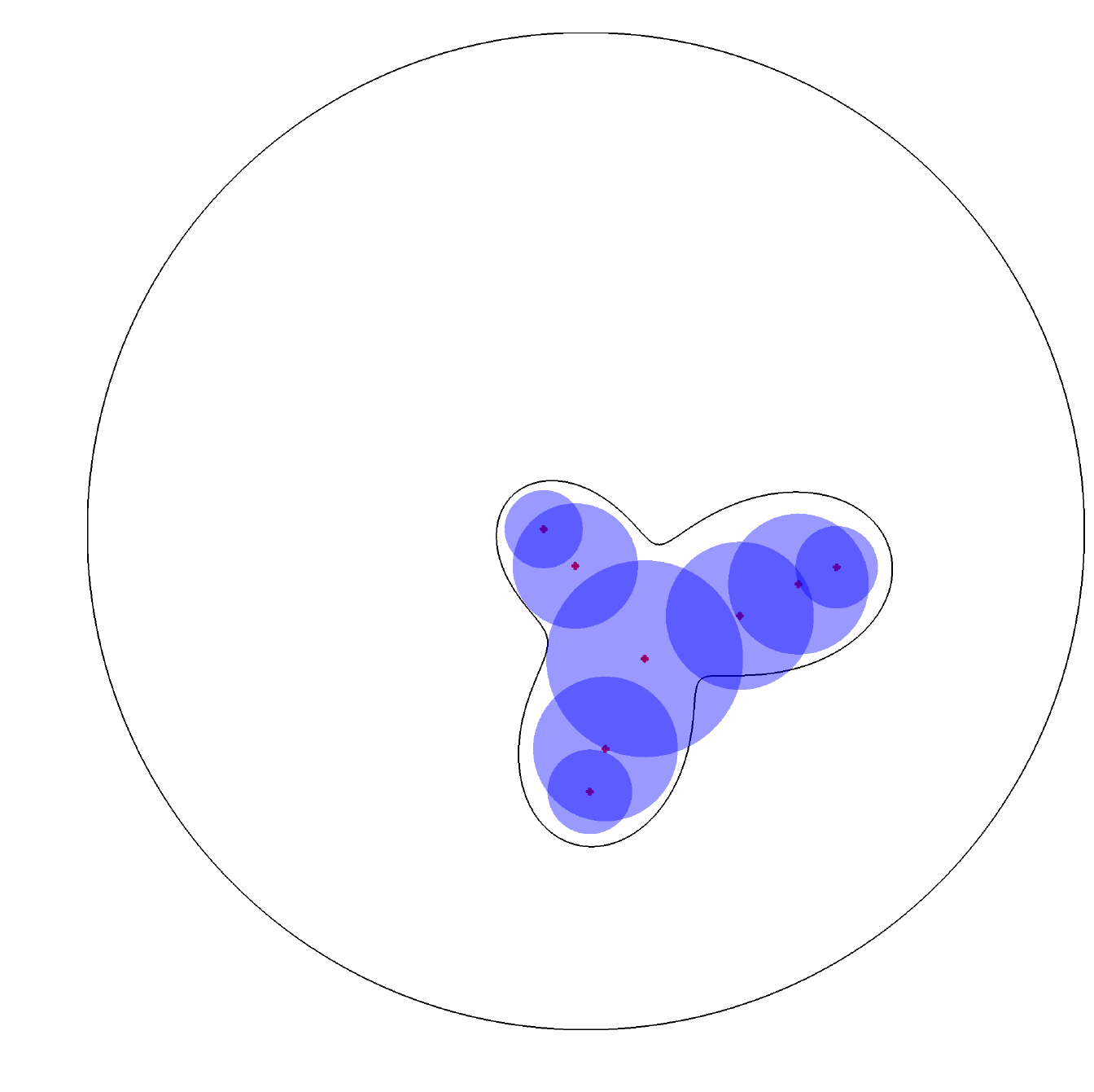}}\hspace{1cm}
	\subfigure[$n=17$]{\includegraphics[width=0.4\textwidth]{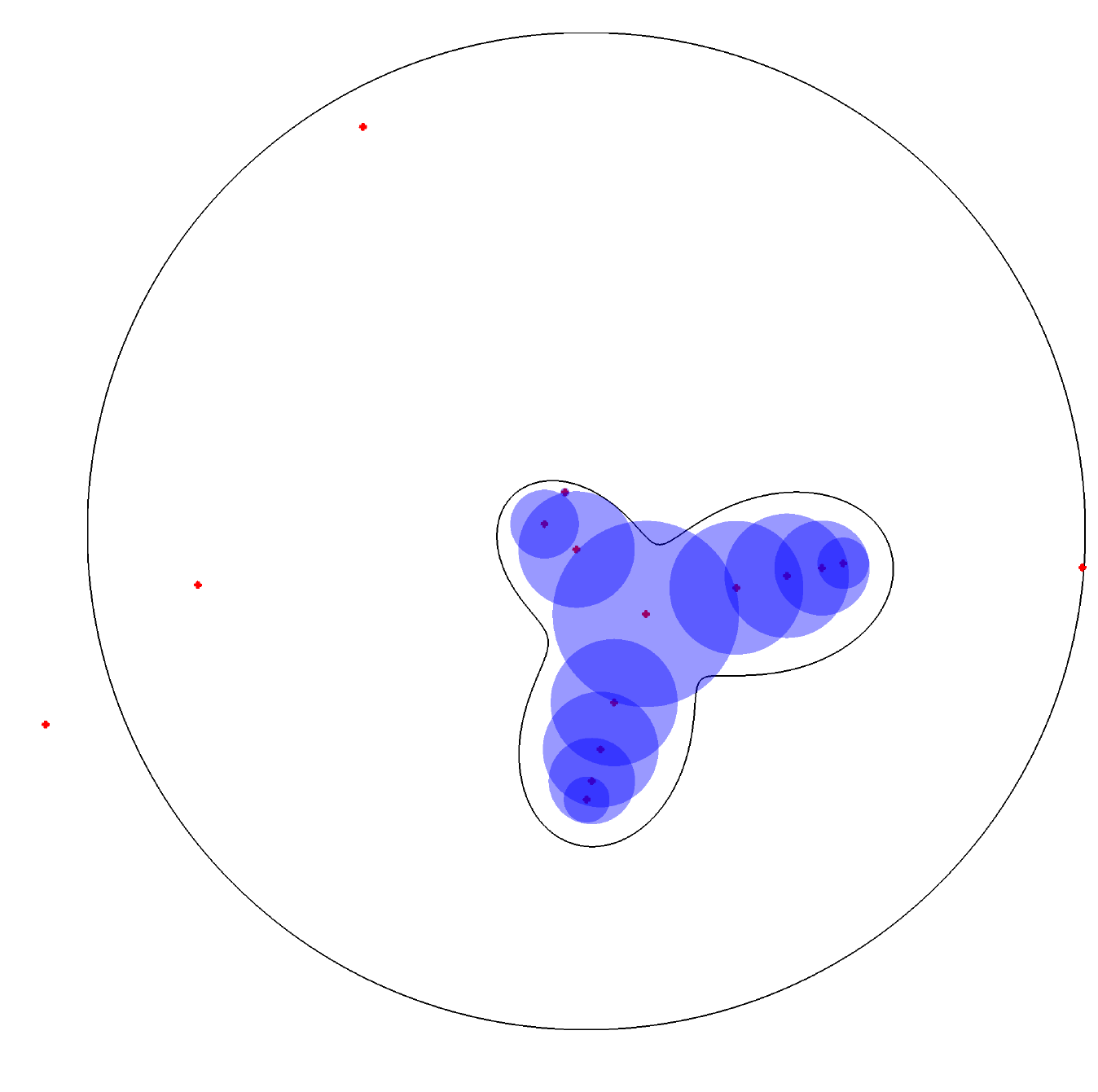}}
\caption{Example of reconstruction of a clover shaped cavity  for different values of $n$.  \label{fig:trefle} }
\end{figure}

\begin{figure}[h]
\centering
	\subfigure[$n=1$]{\includegraphics[width=0.4\textwidth]{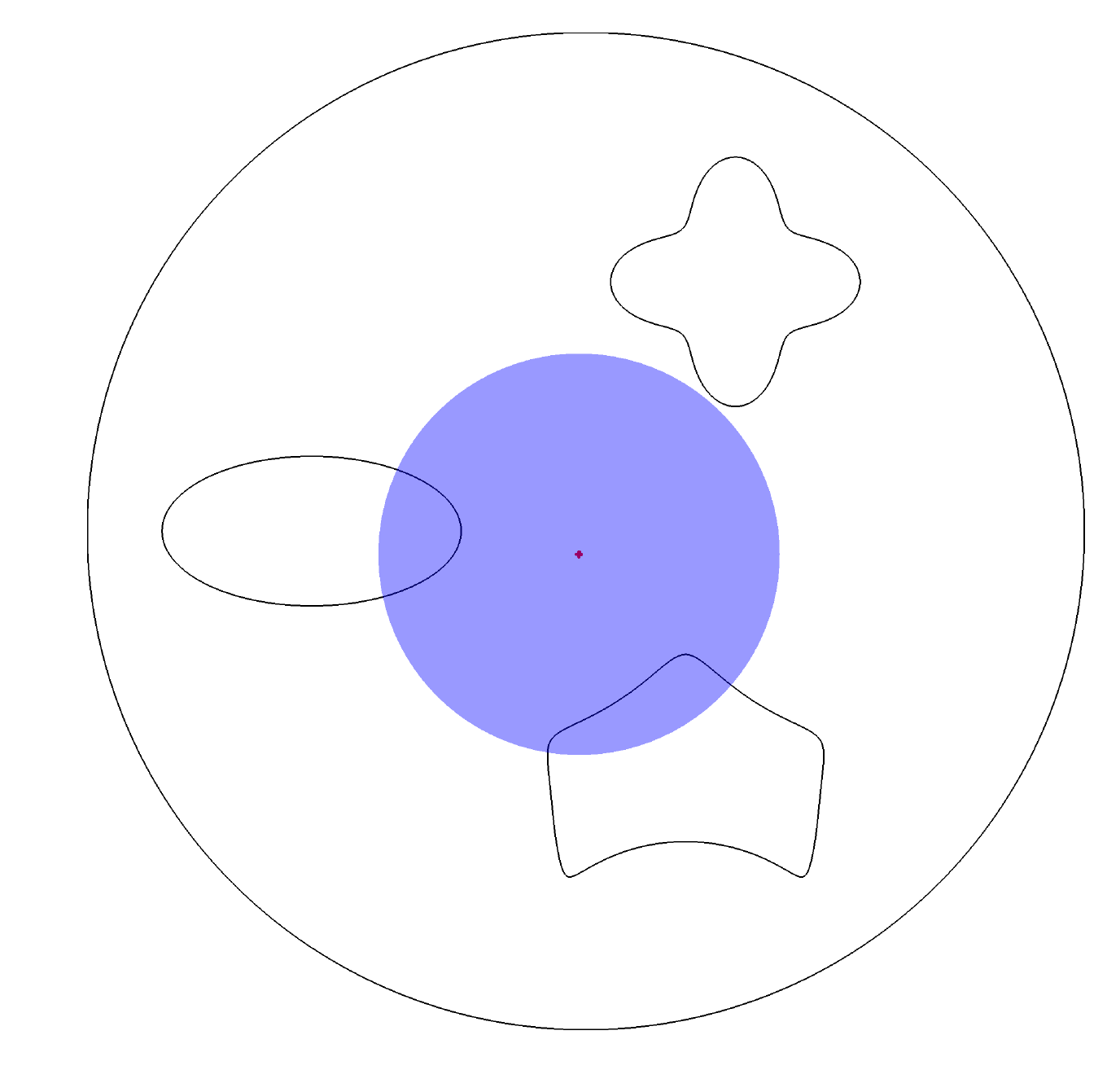}}\hspace{1cm}
	\subfigure[$n=3$]{\includegraphics[width=0.4\textwidth]{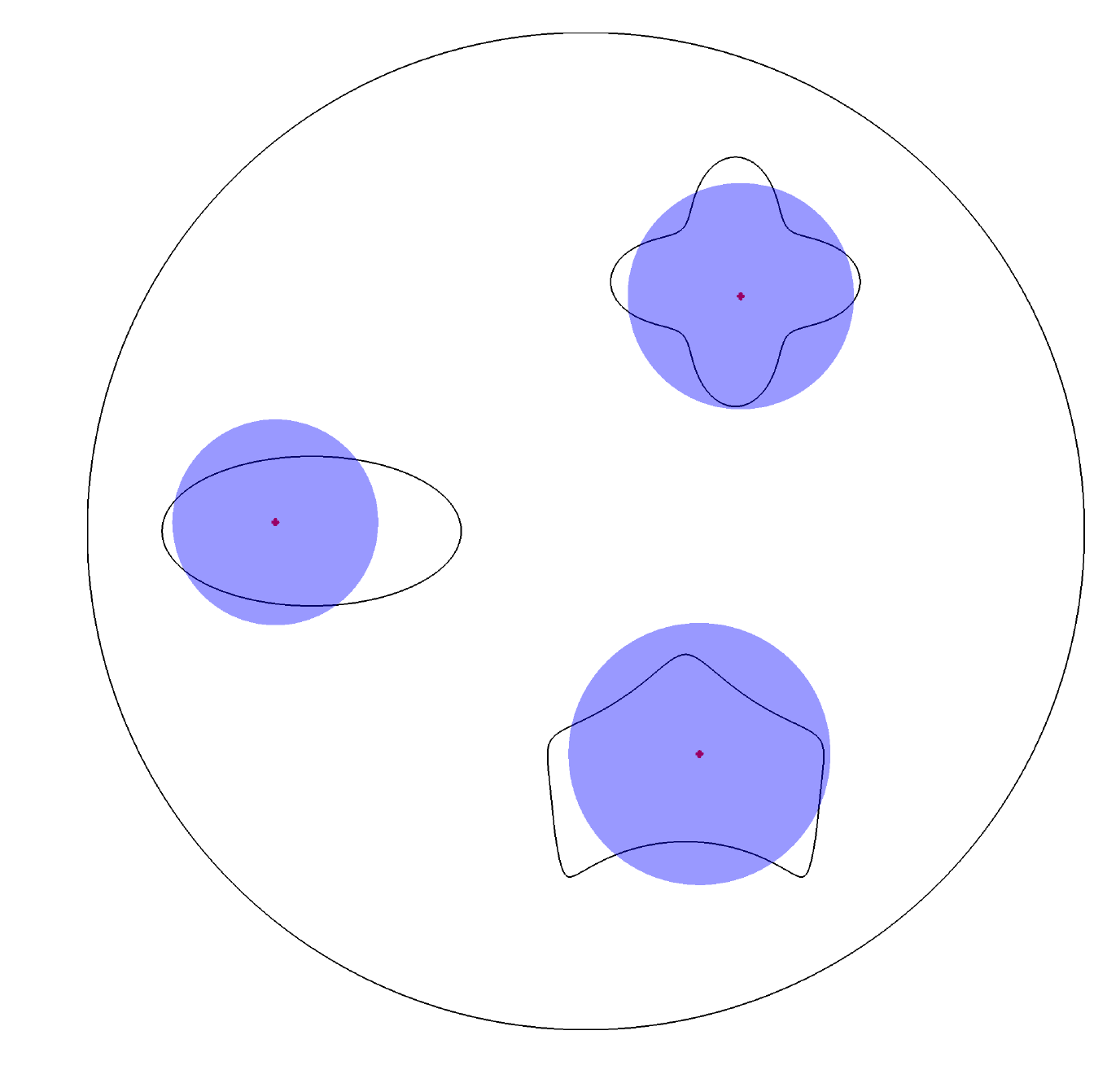}}\\
 	\subfigure[$n=9$]{\includegraphics[width=0.4\textwidth]{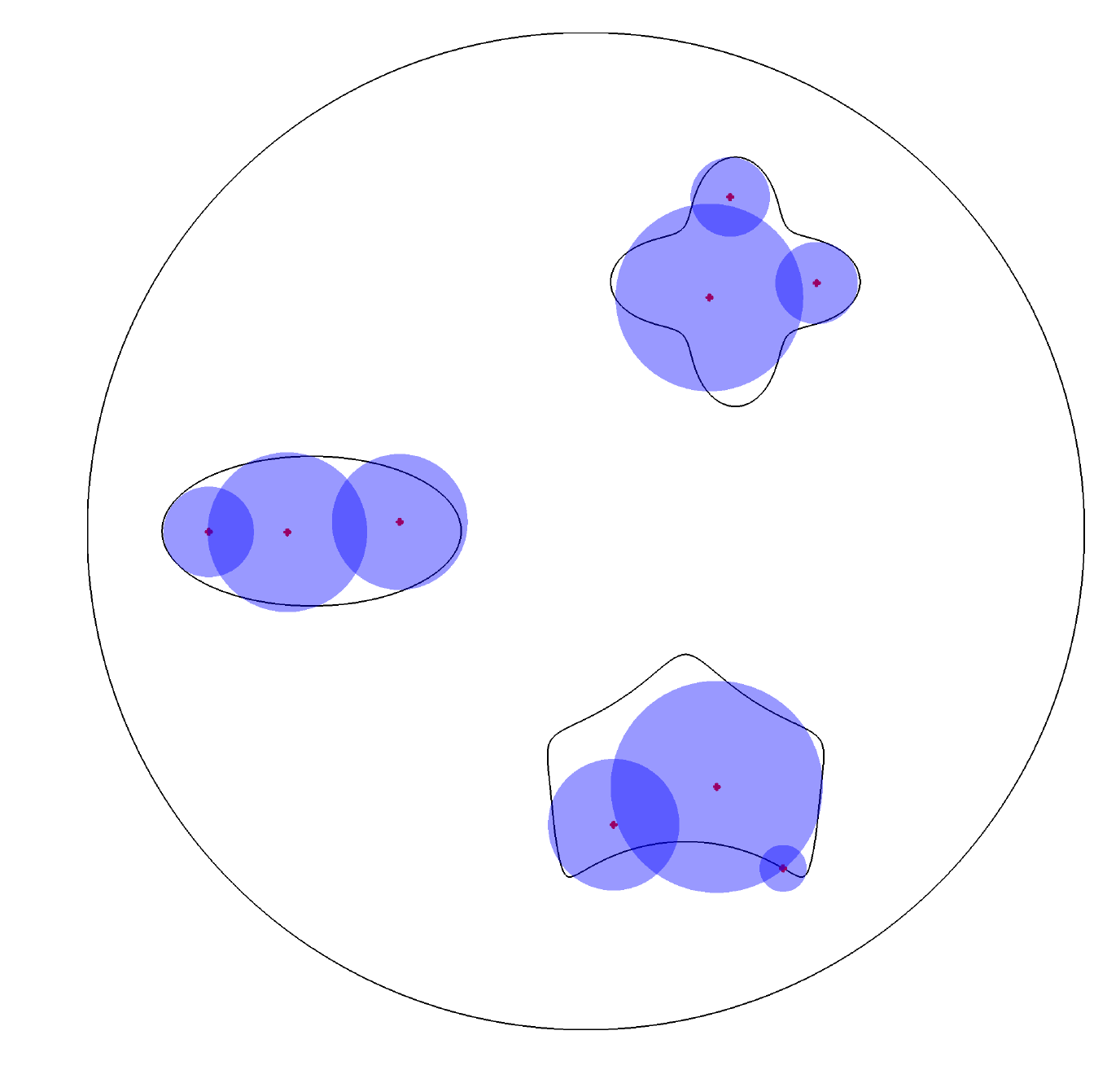}}\hspace{1cm}
	\subfigure[$n=21$]{\includegraphics[width=0.4\textwidth]{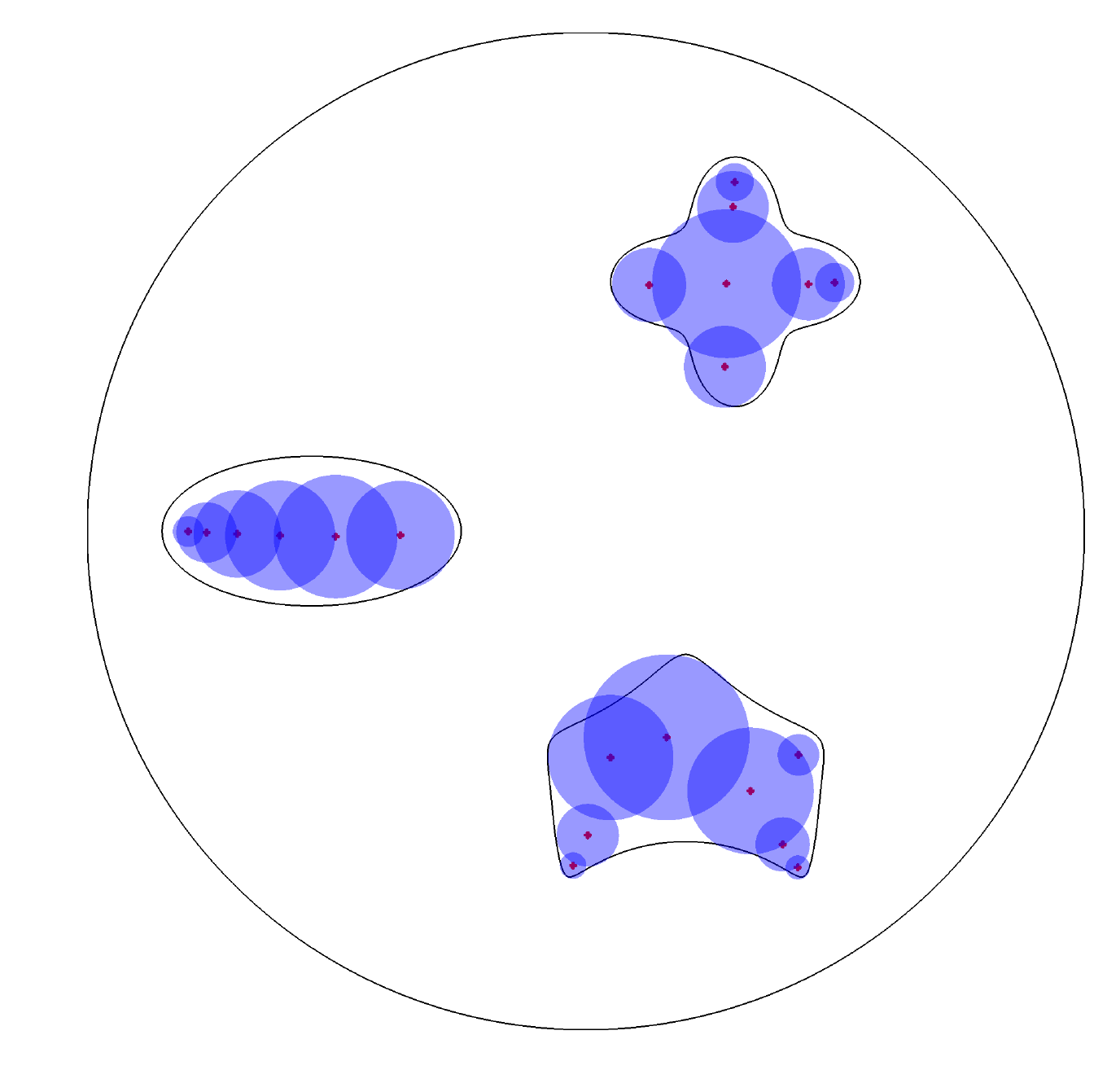}}
\caption{Example of reconstruction of a multiply connected cavity for different values of $n$.  \label{fig:multi} }
\end{figure}

\begin{figure}[h]
\centering
	\subfigure[$n=16$\label{fig:pb:16}]{\includegraphics[width=0.4\textwidth]{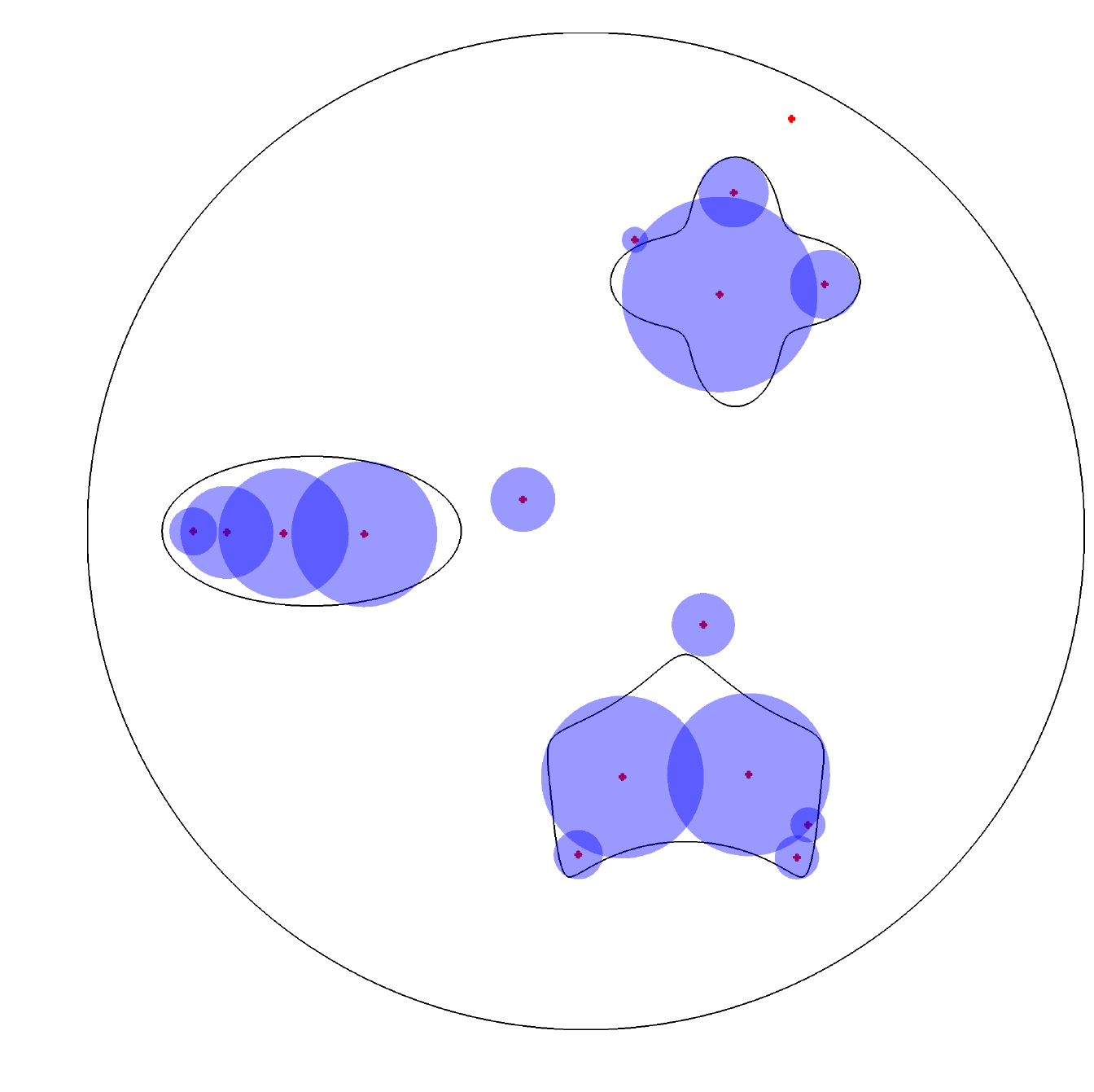}}\hspace{1cm}
	\subfigure[$n=17$\label{fig:pb:17}]{\includegraphics[width=0.4\textwidth]{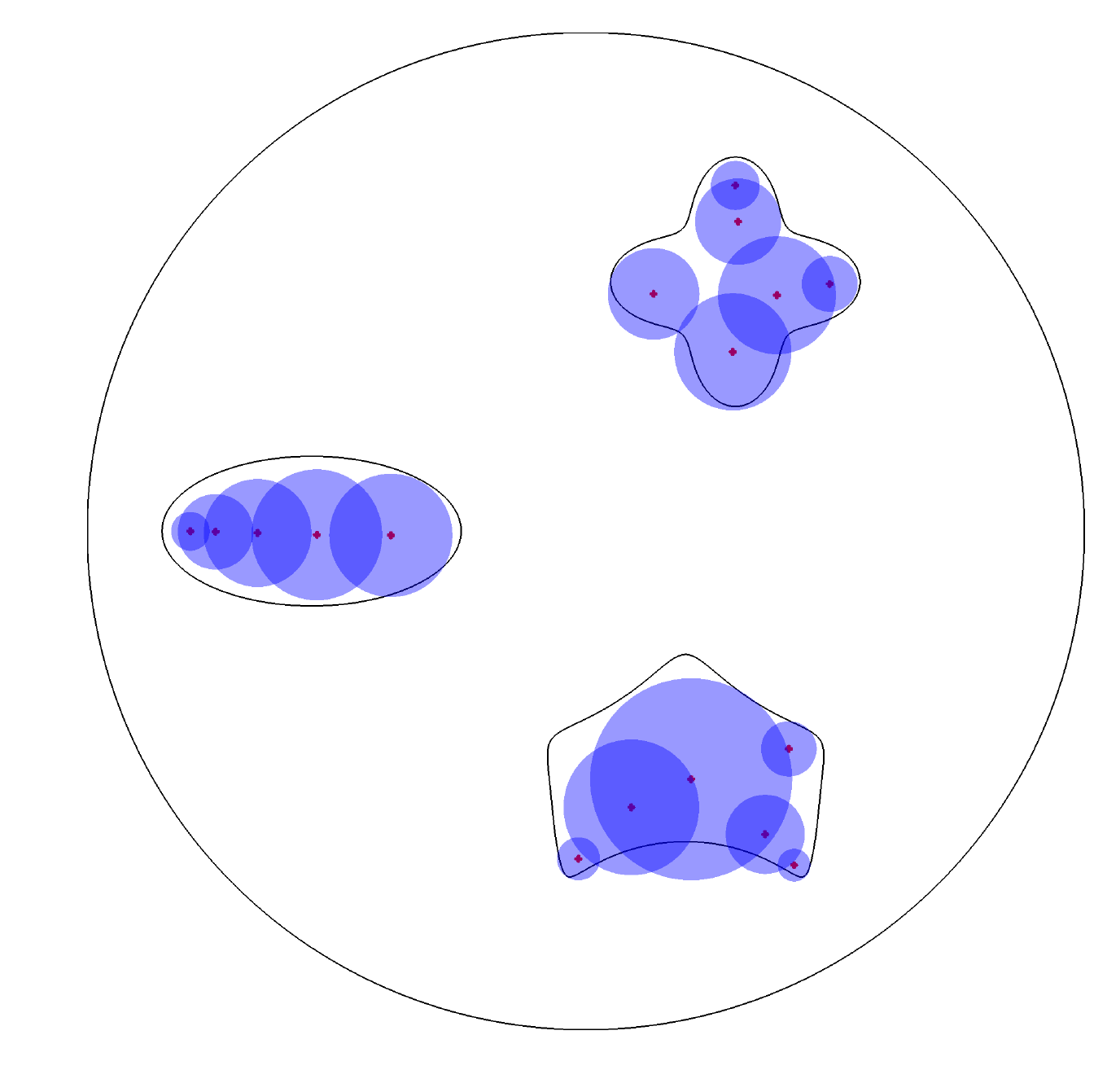}}
\caption{Sensitivity of the reconstruction with respect to $n$: appearance and disappearance of spurious disks outside the cavity.  \label{fig:pb} }
\end{figure}

\appendix
\section{Appendix}\label{sect:appendix}

The next lemma generalizes to the case of a multiply connected boundary the result given in \cite[Lemma 8.14]{McL00} for a simply connected boundary. Our proof is slightly different from the one given there, although it also uses the Fredholm alternative. The notation are those of Section \ref{sect:EI} and we recall that the assumption ${\rm Cap}({\gamma})\neq 1$ is supposed to hold true.
\begin{lemma}
\label{lem:EImult}
Let $f\in \Hcp{\gamma}$ and $\mathbf b=(b_{1}, \dots, b_{N})^{\mathbf T}\in \mathbb R^N$. Then, there exists a unique density $\hat q=(\hat q_{1},\dots,\hat q_{N})\in  \Hcm{\gamma}$ and a unique $\mathbf c=(c_{1}, \dots, c_{N})^{\mathbf T}\in \mathbb R^N$ satisfying the system of equations
 system of equations
\begin{subequations}
\label{eq:EImultiple}
\begin{alignat}{3}
{\mathsf S}_{\gamma}\hat q + \mathbf c &=f,&\quad&\text{on }\gamma\\
\langle \hat q_{k} , 1\ranglemp{{\gamma_{k}}} &=b_k,&\quad&  k=1, \dots,N.
\end{alignat}
\end{subequations}
\end{lemma}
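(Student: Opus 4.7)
The plan is to reformulate the system as a single operator equation between Hilbert spaces, recognize it as a compact perturbation of a boundedly invertible operator, and conclude via the Fredholm alternative after establishing uniqueness by an energy argument.

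\medskip

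\noindent\textbf{Step 1 (operator setting).} Identify any $\mathbf c=(c_1,\dots,c_N)\in\mathbb R^N$ with the piecewise constant function on $\gamma$ equal to $c_k$ on $\gamma_k$, an element of $\Hcp{\gamma}$. Define the bounded linear operator
$$
\mathcal A:\Hcm{\gamma}\times\mathbb R^N\longrightarrow\Hcp{\gamma}\times\mathbb R^N,\qquad
\mathcal A(\hat q,\mathbf c)=\bigl(\mathsf S_\gamma\hat q+\mathbf c,\ \Phi(\hat q)\bigr),
$$
where $\Phi(\hat q):=\bigl(\langle\hat q_k,1\ranglemp{\gamma_k}\bigr)_{k=1}^N$. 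System \eqref{eq:EImultiple} reads $\mathcal A(\hat q,\mathbf c)=(f,\mathbf b)$, so the lemma is equivalent to $\mathcal A$ being bijective.

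\medskip

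\noindent\textbf{Step 2 (Fredholm decomposition).} Split $\mathcal A=\mathcal A_1+\mathcal K$, with
$$
\mathcal A_1(\hat q,\mathbf c)=(\mathsf S_\gamma\hat q,\mathbf c),\qquad
\mathcal K(\hat q,\mathbf c)=\bigl(\mathbf c,\ \Phi(\hat q)-\mathbf c\bigr).
$$
Since $\mathrm{Cap}(\gamma)<1$, \cite[Theorem 8.16]{McL00} ensures that $\mathsf S_\gamma:\Hcm{\gamma}\to\Hcp{\gamma}$ is boundedly invertible, hence $\mathcal A_1$ is an isomorphism. The operator $\mathcal K$ has range contained in the finite dimensional space (piecewise constants on $\gamma$)$\,\times\mathbb R^N$, so it is compact. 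Consequently $\mathcal A$ is Fredholm of index zero, and existence reduces to uniqueness.

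\medskip

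\noindent\textbf{Step 3 (uniqueness via energy).} Assume $(\hat q,\mathbf c)$ solves the homogeneous problem, i.e. $\mathsf S_\gamma\hat q=-\mathbf c$ on $\gamma$ and $\langle\hat q_k,1\ranglemp{\gamma_k}=0$ for all $k$. Summing over $k$ yields $\langle\hat q,1\ranglemp{\gamma}=0$, hence $\hat q\in\widehat{\mathcal H}(\gamma)$ and the single layer $u:=\mathscr S_\gamma\hat q$ belongs to $W^1_0(\mathbb R^2)$, with $u|_{\gamma_k}=-c_k$. Identity \eqref{equiv:decroi} then gives
$$
\int_{\mathbb R^2}|\nabla u|^2\,{\rm d}x
=\langle\hat q,\mathsf S_\gamma\hat q\ranglemp{\gamma}
=-\langle\hat q,\mathbf c\ranglemp{\gamma}
=-\sum_{k=1}^N c_k\,\langle\hat q_k,1\ranglemp{\gamma_k}=0.
$$
Thus $\nabla u\equiv 0$, so $u$ is constant on $\mathbb R^2$; combined with the asymptotics \eqref{asymptSC} (which here gives $u(x)=O(|x|^{-1})$ at infinity since the total charge vanishes) we conclude $u\equiv 0$. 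Therefore $\mathbf c=-u|_\gamma=0$ and, by the jump relation \eqref{eq:jump}, $\hat q_k=-[\partial_n u]_{\gamma_k}=0$ for every $k$.

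\medskip

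\noindent\textbf{Main obstacle.} The only delicate point is making the Fredholm decomposition rigorous, namely checking that the embedding $\mathbf c\mapsto\mathbf c\in\Hcp{\gamma}$ of piecewise constants is continuous (immediate since $\gamma_k$ is $C^{1,1}$ and has finite measure) and that $\Phi$ is continuous from $\Hcm{\gamma}$ into $\mathbb R^N$ (which follows from $1\in\Hp{\gamma_k}$). Everything else is standard potential theory already recalled in Section~\ref{sect:EI}.
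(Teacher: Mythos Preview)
Your proof is correct and follows essentially the same route as the paper: the identical Fredholm decomposition $\mathcal A=\mathcal A_1+\mathcal K$ (the paper's $\mathcal A_0+\mathcal C$) with $\mathsf S_\gamma$ invertible and a finite-rank remainder, followed by uniqueness for the homogeneous system. The only cosmetic difference is in the uniqueness step, where you compute the Dirichlet energy directly via \eqref{equiv:decroi} to get $\nabla u\equiv 0$, whereas the paper argues that $u$ is piecewise constant in $\omega$ and vanishes in the exterior before invoking the jump relations; both reach $\hat q=0$, $\mathbf c=0$ the same way.
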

\begin{proof}
Let us introduce the  operator $\mathcal A$ defined on $\Hcm{\gamma}\times \mathbb R^N$ by 
$$
\mathcal A= 
\begin{pmatrix}
{\mathsf S}_{\gamma}& \text{Id}_{\mathbb R^N} \\
{\mathsf J}_{\gamma}  & 0
\end{pmatrix},
$$
in which
$$
{\mathsf J}_{\gamma}:= \begin{pmatrix} \langle  \cdot, 1\ranglemp{{\gamma_{1}}}  &&0\\  &\ddots& \\ 0 & &\langle \cdot, 1\ranglemp{{\gamma_{N}}}  \end{pmatrix}.
$$
Using this notation, system \eqref{eq:EImultiple} simply reads 
$$
\mathcal A \begin{pmatrix}\hat q \\ \mathbf c \end{pmatrix}=  \begin{pmatrix}f \\ \mathbf b \end{pmatrix}.
$$
Clearly, $\mathcal A$ defines a bounded operator from $\Hcm{\gamma}\times \mathbb R^N$ onto $\Hcp{\gamma}\times \mathbb R^N$. Moreover, from the decomposition
$$
\mathcal A =\mathcal A_{0}+\mathcal C,
$$
with
$$
\mathcal A_{0}= 
\begin{pmatrix}
{\mathsf S}_{\gamma}& 0 \\
0  &  \text{Id}_{\mathbb R^N}
\end{pmatrix},\qquad
\mathcal C= 
\begin{pmatrix}
0& \text{Id}_{\mathbb R^N} \\
{\mathsf J}_{\gamma}  &  -\text{Id}_{\mathbb R^N}
\end{pmatrix},
$$
it is clear that $\mathcal A$ is a Fredholm operator ($\mathcal C$ is a finite rank operator and ${\mathsf S}_{\gamma}$ is boundedly invertible). 

Hence, \eqref{eq:EImultiple} is uniquely solvable if and only if the corresponding homogeneous problem admits only the null solution. Let then $(\hat q, \mathbf c)\in  \Hcm{\gamma}\times {\mathbb R}^N$ satisfying $\mathcal A \begin{pmatrix}\hat q \\ \mathbf c \end{pmatrix}=0$ and denote by $u:=\mathscr S_{\gamma}\hat q$ the single layer potential  corresponding corresponding to $\hat q$. Then, according to the first equation in \eqref{eq:EImultiple} (with $f=0$), $u$ is constant on $\gamma$ and thus constant in $\omega$. On the other hand, the second equation in \eqref{eq:EImultiple} implies that $\hat q\in \widehat  {\mathcal H} (\gamma)$ and hence $u$ vanishes outside $\omega$. Consequently, $\hat q_{k} = -\left[ \partial_{n}u \right]_{\gamma_{k}}=0$ and $\mathbf c=-{\mathsf S}_{\gamma}\hat q=0$. 
\end{proof}

\bibliographystyle{amsplain}

\end{document}